\DeclareRobustCommand{\subtitle}[1]{\\#1}
\def\@tocline#1#2#3#4#5#6#7{\relax
  \ifnum #1>\c@tocdepth % then omit
  \else
    \par \addpenalty\@secpenalty\addvspace{#2}%
    \begingroup \hyphenpenalty\@M
    \@ifempty{#4}{%
      \@tempdima\csname r@tocindent\number#1\endcsname\relax
    }{%
      \@tempdima#4\relax
    }%
    \parindent\z@ \leftskip#3\relax \advance\leftskip\@tempdima\relax
    \rightskip\@pnumwidth plus4em \parfillskip-\@pnumwidth
    #5\leavevmode\hskip-\@tempdima
      \ifcase #1
       \or\or \hskip 1em \or \hskip 2em \else \hskip 3em \fi%
      #6\nobreak\relax
    \hfill\hbox to\@pnumwidth{\@tocpagenum{#7}}\par% <---- \dotfill -> \hfill
    \nobreak
    \endgroup
  \fi}
\DeclareMathAlphabet{\pazocal}{OMS}{zplm}{m}{n}
\NewDocumentCommand{\tens}{e{_^}}{%
  \mathbin{\mathop{\otimes}\displaylimits
    \IfValueT{#1}{_{#1}}
    \IfValueT{#2}{^{#2}}
  }%
}
\newcommand{\F}{\mathbb{F}_2}
\newcommand{\Fp}{\mathbb{F}_p}
\newcommand{\lie}{\mathcal{L}}
\newcommand{\Lie}{\mathrm{Lie}^s}
\newcommand{\tiLie}{\mathrm{Lie}^{s,\mathrm{ti}}}
\newcommand{\free}{\mathrm{Free}}
\newcommand{\E}{\mathbb{E}}
\newcommand{\R}{\bar{\pazocal{R}}}
\newcommand{\rg}{\bar{\pazocal{R}}_{>0}}
\newcommand{\B}{\mathrm{Bar}_{\bullet}}
\newcommand{\g}{\mathfrak{g}}
\newcommand{\AR}{\mathrm{AR}_\bullet}
\newcommand{\Q}{\bar{Q}}
\newcommand{\N}{\mathbb{N}}
\newcommand{\Mod}{\mathrm{Mod}}
\newcommand{\CE}{\mathrm{CE}}
\newcommand{\AQ}{\mathrm{HQ}^{\mathrm{Lie}^s_{\bar{\pazocal{R}}}}}
\newcommand{\TQ}{\mathrm{TQ}^{s\mathcal{L}}}
\newcommand{\id}{\mathrm{id}}
\newcommand{\A}{\pazocal{A}}
\newcommand{\wt}{\mathrm{wt}}
\newcommand{\p}{\pazocal{R}}
\newtheorem{theorem}{Theorem}[section]
\newtheorem{corollary}[theorem]{Corollary}
\newtheorem{conjecture}[theorem]{Conjecture}
\newtheorem{lemma}[theorem]{Lemma}
\newtheorem{proposition}[theorem]{Proposition}
\theoremstyle{definition}
\newtheorem{definition}[theorem]{Definition}
\newtheorem{remark}[theorem]{Remark}
\newtheorem{construction}[theorem]{Construction}
\newtheorem{example}[theorem]{Example}
\newtheorem{notation}[theorem]{Notation}
\title{Quillen homology of spectral Lie algebras \subtitle{with application to mod $p$ homology of labeled configuration spaces} }
\author{Adela YiYu Zhang}
\address{Department of Mathematical Sciences, University of Copenhagen, Universitetsparken 5, 2100 Copenhagen, Denmark}
\email{yz@math.ku.dk}
\begin{document}
\begin{abstract}
We provide a general method computing the mod $p$ Quillen homology of algebras over the monad that parametrizes the structure of mod $p$ homology of spectral Lie algebras, including the construction of a May-type spectral sequence when $p=2$.  This is the $E^2$-page of the bar spectral sequence converging to the mod $p$ topological Quillen homology of spectral Lie algebras. As an application, we study the mod $p$ homology of the labeled configuration space $B_k(M;X)$ of $k$ points in a  manifold $M$ with labels in a spectrum $X$, which is the mod $p$ topological Quillen homology of a certain spectral Lie algebra by a result of Knudsen. We obtain general upper bounds for the mod $p$ homology of $B_k(M;X)$, as well as explicit computations for $k\leq 3$.  When $p$ is odd, we observe that the mod $p$ homology of $B_k(M^n;S^r)$ for small $k$ depends only on the cohomology ring of the one-point compactification of $M$ when $n+r$ is even. This supplements and contrasts with the result of B\"{o}digheimer-Cohen-Taylor when $n+r$ is odd.
\end{abstract}
\maketitle
\renewcommand{\subtitle}[1]{}
%\tableofcontents
\section{Introduction}
Spectral Lie algebras generalize the notion of Lie algebras over a field $k$ to the $(\infty-)$category of spectra. They are parametrized by the spectral Lie operad $s\lie$, whose underlying symmetric sequence $\{\partial_n(\mathrm{Id})\}_n$ is given by the Goodwillie derivatives of the identity functor on the category of pointed spaces. The spectral Lie operad is Koszul dual to the nonunital $\mathbb{E}_{\infty}$-operad \cite{ching}. The homology operad $\{H_*(\partial_n(\mathrm{Id});k)\}_n$ of the spectral Lie operad  recovers the ordinary Lie operad over $k$ up to a shift  \cite{gk, fresse, ching}.

A natural next step is to study the mod $p$ \textit{topological Quillen homology} $$\TQ_*(L;\Fp):=\pi_*\big(|\B(\id,s\lie,L)|\tens \Fp\big)$$ of a spectral Lie algebra $L$, defined in analogy to the mod $p$ topological Andr\'{e}-Quillen homology of nonunital $\mathbb{E}_\infty$-algebras introduced by Basterra \cite{taq}. One  approach is to use the classical bar spectral sequence
\begin{equation}\label{bar}
    E^2_{s,t}=\pi_s\pi_t \B\big(\mathrm{id}, s\lie, L\tens \Fp)\Rightarrow \TQ_{s+t}(L;\Fp),
\end{equation}
   obtained by skeletal filtration of the geometric realization. 
   
To compute the $E^2$-page, it is necessary to understand the structure of the mod $p$ homology of spectral Lie algebras. In \cite{behrens},  Behrens constructed  Dyer-Lashof-type unary operations $\Q^j$ of degree $j-1$ on the mod 2 homology of connective spectral Lie algebras and determined the relations among these operations. Building on the work of Behrens, Antol\'{i}n-Camarena \cite{omar} identified the monad $\Lie_{\R}$ that parametrizes natural operations on the mod 2 homology of  spectral Lie algebras. An algebra over $\Lie_{\R}$ is an unstable module over the algebra $\R$ of Behrens' operations, along with a shifted Lie algebra structure such that brackets of operations always vanish and the self-bracket on an element $x$ is identified with the bottom nonvanishing operation $\Q_0:=\Q^{|x|}$ on $x$. Following their approach, Kjaer \cite{kjaer} constructed Dyer-Lashof-type  unary operations $\overline{\beta^\epsilon Q^j}$ on the mod $p$ homology of spectral Lie algebras for $p>2$ and proved that brackets of operations always vanish. Konovalov \cite{konovalov} computed the relations among unary operations by studying differentials in an algebraic Goodwillie spectral sequence and thus determined the entire structure of operations in the odd primary case. 

Hence the $E^2$-page of the bar spectral sequence (\ref{bar}) associated to a spectral Lie algebra $L$ is equivalent to the \textit{Quillen homology}  $$\AQ_*(H_*(L;\Fp)):=\pi_*\Big(\B\big(\mathrm{id},\Lie_{\R},H_*(L;\Fp)\big)\Big)$$ of the $\Lie_{\R}$-algebra $H_*(L;\Fp)$ when $p=2$. Equivalently, the $E^2$-page is obtained by applying to $H_*(L;\Fp)$ the total left derived functor $\pi_*(\mathbb{L}Q^{\Lie_{\R}}_{\Mod_{\Fp}}(-))$ of the indecomposable functor $Q^{\Lie_{\R}}_{\Mod_{\Fp}}: \Lie_{\R}\rightarrow \Mod_{\Fp}$ on the category of $\Lie_{\R}$-algebras which heuristically kills the $\Lie_{\R}$-algebra structure on a $\Lie_{\R}$-algebra.

The main challenge in computing  the Quillen homology of  $\Lie_{\R}$-algebras when $p=2$ arises from the identification of the self-bracket with the bottom operation $\Q_0$, which precludes a factorization of the free $\Lie_{\R}$-algebra functor as a composite of the free $\Lie_{\F}$-algebra functor followed by the free $\R$-algebra functor. Furthermore, since the category of $\Lie_{\F}$-algebras is nonabelian, we cannot resort to a Grothendieck spectral sequence. 

The method we use involves bounding the Quillen homology of  $\Lie_{\R}$-algebras by the Quillen homology of variants of $\Lie_{\R}$-algebras whose unary and binary operations are disentangled. An upper bound is obtained by constructing a May-type spectral sequence with respect to the length filtration of the homogeneous algebra $\R$ in \Cref{upperbound}. To compute the May $E^1$-page we further construct an algebraic Bockstein spectral sequence, whose $E^1$-page is given by the Quillen homology of a variant of $\Lie_{\R}$-algebras (\Cref{DefinetiLieR}). A lower bound can be produced by mapping into the Quillen homology of another variant of $\Lie_{\R}$-algebras (\Cref{DefinetiLieR>0}).
Then we apply a general result about factoring bar constructions against a composite monad (\Cref{factortiLierg}), which generalizes \cite[Proposition 4.19]{bhk}, in that we replace the bar construction computing the Quillen homology of these variant algebras with a smaller complex  obtained as the total complex of a double complex in \Cref{cor:factortiLierg} and \Cref{homotopytiLieR}. 

To compute the homotopy groups of these total complexes, we utilize the machinery of Koszul duality for additive Koszul algebras \cite{priddy} and Lie algebras \cite{bhk}\cite{CE}\cite{may}\cite{priddy}, as well as explicit understanding of the Bousfield-Cartan-Dwyer operations $$\gamma_i:\pi_{h+r,t}(\Lambda^h(V_\bullet))\rightarrow \pi_{2h+1+r+i,2t-1}(\Lambda^{2h+1}(V_\bullet)), 1\leq i \leq r$$ on the homotopy groups of the free simplicial shifted graded exterior algebra $\Lambda^\bullet (V_\bullet)$ on a simplicial $\F$-module $V_\bullet$ \cite{bokstedt, bousfield, cartan, dwyer, hm}. Thus we obtain general upper bounds for the Quillen homology of  $\Lie_{\R}$-algebras via \Cref{upperbound} and \Cref{homotopywhentrivialbracket}, as well as precise formulae in low weights in \Cref{smallweightisom}.

Furthermore, we are able to provide a full computation of the Quillen homology of  $\Lie_{\R}$-algebras in universal cases. Denote by $\free^{\Mod_{\R}}_{\Mod_{\F}}$ the free allowable $\R$-module functor. The category $\Mod_{\R}$ is stable under the desuspension functor $\Sigma^{-1}$ of $\F$-modules. Then for $1\leq n\leq \infty$, the $\R$-module $\Sigma^{-n}\free^{\Mod_{\R}}_{\Mod_{\F}}(\Sigma^{n+k}\F)$ is an $\Lie_{\R}$-algebra whose $\Lie_{\F}$-structure is trivial. Note that when $n=\infty$, this is the trivial $\Lie_{\R}$-algebra $\mathrm{colim}_{i\rightarrow\infty}\Sigma^{-n}\free^{\Mod_{\R}}_{\Mod_{\F}}(\Sigma^{n+k}\F)\simeq\Sigma^k\F$.

\begin{theorem}[\Cref{quillenhomologytrivial}]
 The Quillen homology $$\AQ_{*,*}(\Sigma^{-n}\free^{\Mod_{\R}}_{\Mod_{\F}}(\Sigma^{n+k}\F))\cong\pi_{*,*}\B(\id,\Lie_{\R},\Sigma^{-n}\free^{\Mod_{\R}}_{\Mod_{\F}}(\Sigma^{n+k}\F))$$ of the  $\Lie_{\R}$-algebra $\Sigma^{-n}\free^{\Mod_{\R}}_{\Mod_{\F}}(\Sigma^{n+k}\F), 1\leq n\leq \infty$ is isomorphic as a bigraded vector space to the shifted graded exterior algebra over $\F$ on generators $\gamma_I\Q_J(x_k)$ satisfying the following conditions:
 \begin{enumerate}
     \item $I=(i_1,\ldots, i_m)$ satisfies $i_l\geq 2i_{l+1}$ for $l<m$, $i_m\geq 2$, and $i_1-i_2-\cdots-i_m\leq r$;
     \item $J=(j_1,\ldots,j_r)$ satisfies 
  $0\leq j_l\leq j_{l+1}+1$ for $l<r$, $0\leq j_r<n$, and if $j_1=0$ then either $r=1$ or $i_m=2$. 
 \end{enumerate}

\end{theorem}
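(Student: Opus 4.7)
The plan is to sandwich the Quillen homology of $M:=\Omega^n\free^{\Mod_{\R}}_{\Mod_{\F}}(\Sigma^{n+k}\F)$, regarded as a trivial $\Lie_{\R}$-algebra (all brackets between distinct elements vanish, and the self-bracket is forced to equal $\Q_0$), between matching upper and lower bounds coming from the two variant monads $\tiLie_{\R}$ of \Cref{DefinetiLieR} and $\tiLie_{\rg}$ of \Cref{DefinetiLieR>0}, and then show the resulting May spectral sequence collapses at $E^1$.

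For the upper bound, I would invoke \Cref{upperbound} to obtain a May-type spectral sequence whose $E^1$-page is $\pi_{*,*}\B(\id,\tiLie_{\R},M)$. By \Cref{homotopytiLieR} this bar construction is quasi-isomorphic to the total complex of a double complex whose two directions separately compute Koszul duality for the algebra $\R$ (via Priddy) and for the shifted Lie operad (via Chevalley--Eilenberg). The $\R$-Koszul direction contributes the admissible monomials $\Q_J(x_k)$ with $J$ satisfying $j_l\leq j_{l+1}+1$, while the shifted Lie Koszul direction produces a shifted graded exterior algebra on those generators. Applying the Bousfield--Cartan--Dwyer operations $\gamma_i$ on the homotopy of the free simplicial shifted exterior algebra \cite{bokstedt}\cite{bousfield}\cite{cartan}\cite{dwyer}\cite{hm} then yields the outer indexing $\gamma_I$ with standard admissibility $i_l\geq 2i_{l+1}$ and $i_m\geq 2$. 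The truncation inequalities $j_r<n$ and $i_1-i_2-\cdots-i_m\leq r$ are the excess conditions forced by $M$ being an $n$-fold loop and by the internal weight of $\Q_J$ respectively.

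For the lower bound, I would use \Cref{factortiLierg} to rewrite the $\tiLie_{\rg}$-Quillen homology as the total complex of an analogous double complex, and use the natural map furnished by \Cref{DefinetiLieR>0} to compare. A weight-by-weight count of admissible monomials shows the upper and lower bounds have the same size in each bidegree, forcing the May spectral sequence to collapse at $E^1$ and the listed generators to be exhaustive. \Cref{smallweightisom} provides an explicit identification in low weight, and \Cref{homotopywhentrivialbracket} handles the Quillen homology in the trivial-bracket regime; these serve as inductive anchors and as consistency checks.

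The main obstacle is the delicate boundary case $j_1=0$, which encodes the identification of the self-bracket $[x,x]$ with the bottom $\R$-operation $\Q_0(x)$. A generic monomial $\gamma_I\Q_J(x_k)$ ending in $\Q_0$ is redundant because the $\Q_0$ has already been accounted for by the shifted Lie self-bracket, and such generators are killed by the differential of the total complex. The exceptions are precisely $r=1$ (no outer $\gamma_I$ has been applied, so the generator itself records the self-bracket rather than an iterated operation) and $i_m=2$ (the tightest $\gamma$-tail, where the cancelling differential is obstructed for degree reasons). Verifying that exactly these two configurations survive, while all other $j_1=0$ monomials are boundaries, requires a careful bookkeeping of the interaction between the Priddy-style differential on $\R$-admissible monomials and the Bousfield--Cartan--Dwyer differentials on the simplicial exterior side; this combinatorial matching is the crux of the argument.
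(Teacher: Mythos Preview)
Your overall sandwich strategy matches the paper's, but there is a genuine gap in how you produce the lower bound. The comparison map $\varphi_*:\pi_{*,*}\B(\id,\Lie_{\R},M)\to\pi_{*,*}\B(\id,\tiLie_{\rg},M)$ is neither injective nor surjective on homotopy, so $\pi_{*,*}\B(\id,\tiLie_{\rg},M)$ is \emph{not} a lower bound for the Quillen homology, and your claimed ``weight-by-weight count'' equating $\tiLie_{\R}$- and $\tiLie_{\rg}$-answers is simply false: the former allows $j_l\geq 0$ while the latter requires $j_l\geq 1$. The paper instead proceeds in two steps. First, \Cref{cokernel} computes $\mathrm{coker}(\varphi_*)$ as the ideal generated by the $\gamma_I\gamma_1\Q_J(x_k)$, using the key chain-level identity $d\bigl(\gamma_1(\tilde\alpha)\bigr)=[\tilde\alpha,\tilde\alpha]=\Q_0|\tilde\alpha$ in $\B(\id,\Lie_{\R},M)$ (versus $0$ in the $\tiLie_{\rg}$ complex). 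This identifies $\mathrm{im}(\varphi_*)$ and is the source of the restriction $i_m\geq 2$: it is \emph{not} ``standard admissibility'' for the Bousfield--Cartan--Dwyer operations (which allow $\gamma_1$), but rather the statement that $\gamma_1$-classes fail to be cycles in the $\Lie_{\R}$ bar complex precisely because of the self-bracket identification. Second, the paper explicitly constructs cycles in $\ker(\varphi_*)$, namely the $\gamma_I\Q_0\Q_{J'}(x_k)$ classes, by lifting along $\varphi$ and checking they remain cycles. Only after combining image with kernel does one obtain a lower bound whose dimension agrees with the $\tiLie_{\R}$ upper bound of \Cref{basisfortiLieAR}.

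Your explanation of the exceptional cases is also garbled. The index $r$ is the length of $J$, not the presence of $\gamma_I$; the exception $r=1$ holds because $\gamma_1$ needs input of simplicial degree at least $1$, so $\Q_0(x_k)$ cannot be a boundary. The exception $i_m=2$ holds because the differential $\gamma_{I'}\gamma_1(\alpha)\mapsto\gamma_{I'+1}(\Q_0|\alpha)$ (indices shifted by one) lands only on classes with bottom $\gamma$-index at least $3$, so $\gamma_I\Q_0\Q_{J'}$ with $i_m=2$ is not hit. These are not ``degree obstructions'' but direct consequences of the explicit B\"okstedt--Ottosen formula for $\gamma_1$ from \Cref{bokstedt} and \Cref{delta1}.
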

Note in particular that natural operations on a class of degree $k$ in the Quillen homology of $\Lie_{\R}$-algebras are given by the Quillen homology of the trivial $\Lie_{\R}$-algebras $\Sigma^k\F$, and the above theorem gives us a dimension count.

\

\noindent{\textbf{Application to labeled configuration spaces.}}
The second half of the paper makes use of the computation of the Quillen homology of $\Lie_{\R}$-algebras to study the mod $p$ homology of the labeled configuration spectrum $$B_k(M,X):=\Sigma^{\infty}_+\mathrm{Conf}_k(M)\tens_{h\Sigma_k} X^{\tens k}$$ of $k$ points in a parallelizable manifold $M$ with labels in a spectrum $X$. 

The study of labeled configuration spaces dates back to as early as Segal \cite{segal} and McDuff \cite{mcduff} as generalizations of the unordered configuration space of $k$ points in $M$.  The rational homology groups of labeled configuration spaces are relatively well understood in cases of interests via classical methods, see for instance \cite{BC, BCT, kriz, totaro, felix}.

In contrast, the mod $p$ homology groups of these objects have remained mostly intractable. Classically, the only known cases are the following:
\begin{itemize}
    \item $M=\mathbb{R}^\infty$ with arbitrary labeling spectra by May \cite{maysq, Einfinity} and Steinberger \cite[III]{bmms} who built on the work of \cite{adem, araki, car54b, dyerlashof}, and $M=\mathbb{R}^n$  by Cohen \cite[III]{CLM}. Then $\bigoplus_{k\geq 0} B_k(M;X)$ is the free $\mathbb{E}_n$-algebra on $X$. Its mod $p$ homology is captured by the Dyer-Lashof operations, a polynomial product, and the Browder brackets as a functor of $H_*(X;\mathbb{F}_p)$.
    \item Arbitrary manifold $M$ with labeling spectrum $X=\Sigma^{\infty}S^r$, where either $p=2$ or $p>2$ and $n+r$ is odd \cite{BCT, milgram, BCM}. In these cases, there is a homology decomposition
\begin{equation}\label{BCT}
   H_*(\bigoplus_{k\geq 0} B_k(M; \mathbb{S}^r);\Fp)\cong \bigotimes_i H_*(\Omega^i S^{n+r};\Fp)^{\tens \mathrm{\ dim}H_i(M)}. 
\end{equation}
In particular, the homology depends only on the $\mathbb{F}_p$-module $H_*(M;\mathbb{F}_p).$
\end{itemize}

The most recent developments in the computation of the homology of labeled configuration spaces originate from a result of Knudsen \cite{ben}. Using the machinery of factorization homology, he established an equivalence of spectra
\begin{equation}\label{knudsen}
    \bigoplus_{k\geq 1} B_k(M;V)  \simeq \mid \B\big(\mathrm{id}, s\lie, \free^{s\lie}(\Sigma^{n}V) ^{M^+}\big) \mid.
\end{equation}   
Here $M$ is a parallelizable $n$-manifold, $s\lie$ is the monad associated to the  free spectral Lie algebra functor $\free^{s\lie}$, and $(-)^{M^+}$  the cotensor with the one-point compactification of $M$ in the $\infty$-category of spectral Lie algebras. A rational version of this equivalence was proved by Ayala and Francis in \cite{af}.

Knudsen's result opens up a path for extracting information about the homology of labeled configuration spaces. In \cite{knudsen}, Knudsen provided a general formula for the Betti numbers of unordered configuration spaces by observing that the bar spectral sequence with rational coefficients for the bar construction (\ref{knudsen})  collapses at the $E^2$-page. Building on Knudsen's work, Drummond-Cole and Knudsen \cite{dck} computed the Betti numbers of unordered configuration spaces of surfaces, vastly improving on earlier works including the case of  once-punctured orientable surface by B\"{o}digheimer and Cohen \cite{BC}.  In \cite{bhk},  Brantner, Hahn, and Knudsen studied the Knudsen spectral sequence  with coefficients in  Morava $E$-theory at an odd prime using Brantner's results on the structure of the Morava $E$-theory of spectral Lie algebras \cite{brantner}. They computed the weight $p$ part of the labeled configuration spaces in $\mathbb{R}^n$ and punctured genus $g$ surfaces $\Sigma_{g,1}$ for $ g\geq 1$ with coefficient in a sphere. By letting the height go to infinity, they observed that the integral homology of $B_p(\Sigma_{g,1})$ is $p$-power-torsion free for any odd prime $p$.

In this paper, we adapt their approach and study the mod $p$ homology of $B_k(M,X)$ for $M$ a parallelizable $n$-manifold and $X$ any spectrum by examining the mod $p$ Knudsen spectral sequence, i.e., the bar spectral sequence (\ref{bar}) with coefficients in $\Fp$ applied to the bar construction (\ref{knudsen}). 

When $p=2$, our general understanding of the $E^2$-page, i.e., the Quillen homology of $\Lie_{\R}$-algebras, allows us to obtain an upper bound for $H_*(B_k(M,X);\F)$ in \Cref{E2upperbound} for arbitrary parallelizable manifold $M$ and spectrum $X$.  In the universal case $M=\mathbb{R}^\infty$ and $X=\mathbb{S}^r$, the bar spectral sequence has $E^2$-page given by \Cref{quillenhomologytrivial}.    Comparing with the computation  of the homology of free $\mathbb{E}_\infty$-algebras \cite{adem, dyerlashof, maysq, bmms}, we see that there are infinitely many higher differentials and conjecture the following universal pattern, which can be verified in low weights by sparsity arguments:
\begin{conjecture} [\Cref{conjecture}]
Each page of the spectral sequence
$$E^2_{s,t}=\AQ_{s,t}(\Sigma^k\F)\Rightarrow \pi_{s+t}\B(\id,s\lie,\Sigma^k\F)$$ is an exterior algebra. The higher differentials act on the exterior generators of the $E^2$-page as follows, see Figure \ref{pattern}:
\begin{enumerate}
    \item For an exterior generator $\alpha=\Q_{j_1}\cdots \Q_{j_m}(x_k)$ on the $E^2$-page,  we have
 $$d^{r+1}\gamma_{r+1}(\alpha)=\Q_{r}(\alpha)$$ for  $r<m$ and $r\leq j_1+1$.
 \item For an exterior generator $\beta=\gamma_{n+1}\Q_{j_1}\cdots \Q_{j_{m-1}}(x_k)$  on the $E^2$-page, we have
 \begin{enumerate}
     \item $d^{n+1}(\beta)=\Q_{n}\Q_{j_1}\cdots \Q_{j_{m-1}}(x_k)$,
     \item $d^{n+1}\gamma_{m+n}(\beta)=d^{n+1}(\beta)\tens \beta$,
     \item 
 $\gamma_{l} d^{n+1}(\beta)=d^{2n+1}\gamma_{n+l-1}(\beta)$ for $n+3\leq l\leq m$.
 \end{enumerate}
\end{enumerate}
  These generate all higher differentials under further applications of the $\gamma_i$ operations in accordance with (2).(b) and (2).(c), as well as the exterior product.
\end{conjecture}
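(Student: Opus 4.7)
The plan is to combine the drastic smallness of the abutment with multiplicativity and naturality of the bar spectral sequence to pin down the differentials page by page. The target $\pi_*\B(\id,s\lie,\Sigma^k\F)$ is the mod $2$ topological Quillen homology of the trivial spectral Lie algebra $\Sigma^k\F$; as a trivial algebra over a reduced operad this is canonically $\Sigma^k\F$ itself, concentrated at bidegree $(0,k)$. Thus, starting from the explicit exterior generators on $E^2$ supplied by the preceding theorem, every class except $x_k$ must eventually die, and the proposed formulas are the minimal scheme of differentials achieving this extinction.

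I would first establish that each $E^r$-page is a shifted graded exterior algebra, so that differentials are derivations and are determined by their values on generators. The bar construction on a trivial algebra over a reduced operad carries a diagonal from the Hopf structure on the simplicial resolution, and this descends to a multiplicative structure on the spectral sequence compatible with differentials. Generators on the $(r+1)$-st page can then be read off inductively as cycles modulo boundaries on page $r$.

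For formula (1) I would induct on the length $m$ of the Q-word. The base case $r=0$ asserts $d^2\gamma_2(\alpha)=\Q_0(\alpha)$, which is the $\Lie_{\R}$-relation $[x,x]=\Q_0(x)$ transported to $E^2$ in exact analogy with the classical Bousfield-Cartan-Dwyer formula $d^2\gamma_2(x)=x^2$ for simplicial commutative $\F$-algebras \cite{bousfield}\cite{cartan}\cite{dwyer}. For larger $r$, once $\Q_{r-1}(\alpha)$ has been killed on page $r$, the class $\Q_r(\alpha)$ appears as a secondary operation on the new cycle $\gamma_{r+1}(\alpha)$, paralleling the higher Bockstein-style operations of \cite{hm}\cite{bhk}. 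Comparison with the universal example $M=\mathbb{R}^\infty$, $X=\Sp^r$, where the abutment is the mod $2$ homology of the free $\mathbb{E}_\infty$-algebra computed by May and McClure \cite{bmms}, pins down the index ranges $r<m$ and $r\leq j_1+1$.

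The main obstacle is formula (2), which encodes the interaction between the $\gamma_i$ operations and transgression. Part (a) is a special case of (1) obtained by setting $r=n$ and absorbing the outer $\gamma_{n+1}$ into the Q-word. Part (b) should follow from a Cartan-type formula for $\gamma_i$ on products in simplicial shifted exterior algebras combined with the algebra structure on $E^r$; the divided-power nature of $\gamma_i$ naturally produces the tensor form $d^{n+1}(\beta)\tens \beta$ in place of a naive square. The subtlest piece is (c), where $\gamma_{n+l+1}$ transgresses to $\gamma_{l-2}$, an index shift of $n+3$ reflecting how the bar filtration interacts with the Koszul-duality grading. I would attempt to prove (c) as a Kudo-type transgression theorem for Bousfield-Cartan-Dwyer operations, using the double complex model of \Cref{factortiLierg} and \Cref{homotopytiLieR} together with the explicit computations of $\gamma_i$ on $\pi_{*,*}\Lambda^\bullet(V_\bullet)$ from \cite{bokstedt}\cite{bousfield}\cite{dwyer}\cite{hm}: both sides can in principle be evaluated in a single universal cell and matched. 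Finally, a weight-by-weight dimension count on each $E^r$-page against the one-dimensional abutment, using the enumeration of exterior generators in the preceding theorem, should verify that (1) and (2) exhaust all nonzero higher differentials.
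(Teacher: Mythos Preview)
The statement you are trying to prove is a \emph{conjecture}, not a theorem: the paper does not prove it. The paper explains that the conjectured pattern is extracted from low-degree sparsity arguments and comparison with the known $E^\infty$-page, and says that a proof is the subject of joint work in progress using a deformation of the comonad $|\B(\id,s\lie,-)|$ to Beilinson-connective filtered $\F$-modules. There is therefore no ``paper's own proof'' against which to compare your proposal.

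More seriously, your proposal rests on a mistaken identification of the abutment. You claim that $\pi_*|\B(\id,s\lie,\Sigma^k\F)|$ is one-dimensional because $\Sigma^k\F$ is a trivial algebra over a reduced operad. This is false: the underived indecomposables of a trivial algebra give back the algebra, but the \emph{derived} indecomposables do not. The bar construction on a trivial $\mathcal{O}$-algebra computes the cofree coalgebra over the Koszul dual cooperad, and for $\mathcal{O}=s\lie$ the Koszul dual is $\E_\infty^{\mathrm{nu}}$. The paper makes this explicit in equation (\ref{stablesseq}): the spectral sequence converges to $H_*(\free^{\E_\infty}(\mathbb{S}^k);\F)$, which is the polynomial algebra on admissible Dyer-Lashof monomials on a class in degree $k$. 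So the abutment is very far from one-dimensional, and your strategy of ``everything except $x_k$ must die'' collapses at the outset. Indeed, many of the exterior generators $\Q_J(x_k)$ on the $E^2$-page are permanent cycles detecting Dyer-Lashof operations on $x_k$ in the target; the conjectured differentials are precisely what is needed to kill the \emph{excess} generators beyond those surviving to $E^\infty$.

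Your sketch for (2)(c) via a Kudo-type transgression theorem also runs into a difficulty that the paper flags explicitly in \Cref{unviersaldiff}: the operations $\Q_j$ on this spectral sequence \emph{increase} filtration, so the classical machinery for constructing operations on spectral sequences filtration-wise does not apply here. This is exactly why the paper defers the proof to a different technique.
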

\begin{remark}
 
While the pattern of universal differentials is similar to classical ones studied by Dwyer \cite{dwyerdiff} and Turner \cite{turner}, the operations $\Q_j$  on coalgebras over the comonad $\pi_{*,*}\B(\id,\Lie_{\R},-)$ increase filtration and hence cannot be constructed directly at the chain level, see \Cref{unviersaldiff}. In forthcoming work with Robert Burklund and Andrew Senger, we construct a suitable deformation of the comonad associated to the bar construction $|\B(\id,s\lie,-)|$ to the $\infty$-category of Postnikov-connective filtered $\F$-modules and use the comonad structure to prove the conjectured pattern of  differentials.

\end{remark}

For small $k$, we use sparsity arguments to show that the weight $k$ part of the Knudsen spectral sequence with $\F$ coefficients always collapses on the $E^2$-page. Thus we obtain an $\F$-bases of $H_*(B_k(M;X);\F)$  for any parallelizable manifold $M$ and spectrum $X$ when $k=2$ in \Cref{weight2}, and for closed parallelizable  $M$ when $k=3$ in \Cref{closedweight3}. In particular, we observe that the $\F$-module $H_*(B_k(M;X))$ depends on and only on the cohomology \textit{ring} $H^*(M^+;\F)$ when $H_*(X;\F)$ has at least two generators. This is in contrast to the case when $X=\mathbb{S}^r$, in that the equivalence (\ref{BCT}) depends only on the $\F$-module $H^*(M;\F)$ \cite{BCT}. As examples, we produce explicit bases for $H_*(B_k(M,X);\F), \ k=2,3$ when $X$ is an arbitrary spectrum, and $M$ is a closed torus or a punctured genus $g$ surface in \Cref{torus}, as well as the (punctured) real projective space $\mathbb{RP}^3$ in \Cref{RP3}. 

When $p>2$, we compute the weight $k\leq p$ part of the $E^2$-page of the Knudsen spectral sequence with $\Fp$-coefficients in terms of certain $\Lie_{\Fp}$-algebra homology in \Cref{oddE2}. We deduce the existence of a single $d_{p-2}$-differential in  the Knudsen spectral sequence when $M=\mathbb{R}^n$ with $n\geq 1$, $X=\mathbb{S}^{2l}$ and $k=p\geq 5$ in \Cref{odddiff}. Then we show that the mod $p$ Knudsen spectral sequence collapses when $k=2$ or $k=3$ and $p\geq 5$.

\begin{corollary} [\Cref{oddsmallweight}]
Let $M^n$ be a parallelizable manifold and $X$ any spectrum. Let  $\g$ be the $\Lie_{\Fp}$-algebra $\widetilde{H}^*(M^+;\Fp)\tens \Lie_{\p}(\Sigma^n H_*(X;\Fp))$ with brackets given by $[y\tens x, y'\tens x']:=(y\cup y')\tens [x,x']$, and $\CE(\g)$ the shifted Chevalley-Eilenberg complex (\Cref{FpCE}).   Denote by $\wt_n(H_{s,t}(\CE(\g))$ the weight $n$ part of the  $\Lie_{\Fp}$-algebra homology of $\g$, whose weight grading is induced by regarding $H_*(X;\Fp)$ as an $\Fp$-module in weight 1.
\begin{enumerate}
    \item For all $i$, there is an isomorphism of $\Fp$-modules $$H_i(B_2(M;X);\Fp)\cong\bigoplus_{s+t=i}\wt_2(H_{s,t}(\CE(\g)).$$ 
    \item If $p\geq 5$, then for all $i$ $$H_i(B_3(M;X);\Fp)\cong\bigoplus_{s+t=i}\wt_3(H_{s,t}(\CE(\g)).$$
\end{enumerate}
\end{corollary}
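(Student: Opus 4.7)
The plan is to combine Knudsen's equivalence (\ref{knudsen}) with the $E^2$-page identification provided by \Cref{oddE2} and a sparsity argument establishing collapse at $E^2$.

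Applying (\ref{knudsen}), $\bigoplus_{k\geq 1} H_*(B_k(M;X);\Fp)$ is the abutment of the bar spectral sequence (\ref{bar}) applied to the cotensor $\free^{s\lie}(\Sigma^n H_*(X;\Fp))^{M^+}$. The spectral sequence is weight-graded and the differentials preserve weight, so I work in each weight $k$ separately. By \Cref{oddE2}, for $k\leq p$ the weight $k$ summand of the $E^2$-page is isomorphic to $\wt_k H_{s,t}(\CE(\g))$, with $\g$ the $\Lie_{\Fp}$-algebra in the statement; the bracket $[y\tens x, y'\tens x']=(y\cup y')\tens [x,x']$ arises from pairing the cup-product structure on $\widetilde{H}^*(M^+;\Fp)$ (as the cotensor coefficients) against the shifted Lie bracket on $\Lie_{\p}(\Sigma^n H_*(X;\Fp))$. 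It therefore suffices to establish collapse $E^2=E^\infty$ in weight $k=2$ for any odd prime $p$, and in weight $k=3$ for $p\geq 5$.

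Collapse is a sparsity argument. The unary operations $\overline{\beta^\epsilon Q^j}$ of Kjaer on the mod $p$ Quillen homology of spectral Lie algebras all multiply weight by $p$. In the universal case $M=\mathbb{R}^n$, $X=\mathbb{S}^{2l}$, every higher differential on the bar spectral sequence is produced by one of these operations, as evidenced by \Cref{odddiff}, which realizes a $d_{p-2}$-differential precisely when the source lies in weight $k=p$. Since $k=2<p$ for every odd $p$ and $k=3<p$ for every $p\geq 5$, no nonzero higher differentials can appear within weight $k$ in the universal case; in particular, the restriction $p\geq 5$ in (2) is exactly what rules out the exceptional coincidence $k=p=3$.

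The main obstacle is transferring the universal collapse to arbitrary $M$ and $X$. The plan is to use naturality of the bar spectral sequence with respect to the maps of spectral Lie algebras obtained by picking $\Fp$-bases of $\widetilde{H}^*(M^+;\Fp)$ and $H_*(X;\Fp)$, so that any hypothetical higher differential in weight $k=2,3$ on the $E^2$-page associated to $\free^{s\lie}(\Sigma^n H_*(X;\Fp))^{M^+}$ factors through a universal differential already ruled out above. The remaining bookkeeping is to check that this comparison respects the bracket $[y\tens x, y'\tens x']=(y\cup y')\tens [x,x']$, which follows from the derivation property of the Lie bracket together with the Cartan formula for the Dyer-Lashof operations.
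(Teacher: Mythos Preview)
Your argument has a genuine gap in the reduction step. You propose to transfer collapse from the universal case $M=\mathbb{R}^n$, $X=\mathbb{S}^{2l}$ to arbitrary $M$ and $X$ via ``naturality of the bar spectral sequence with respect to maps of spectral Lie algebras obtained by picking $\Fp$-bases.'' But picking an $\Fp$-basis of $\widetilde{H}^*(M^+;\Fp)$ or $H_*(X;\Fp)$ does not produce a map of spectral Lie algebras between $\free^{s\lie}(\Sigma^n X)^{M^+}$ and a wedge of universal ones; there is no map of spaces $M^+\to S^n$ realising an arbitrary cohomology class, and the cotensor structure depends on the cup product, which your proposed splitting forgets. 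Even granting some comparison map, naturality of a spectral sequence only says that differentials commute with the induced map, not that a differential in the source ``factors through'' one in the target, so a vanishing differential in the universal case does not force vanishing in general. The appeal to \Cref{odddiff} is also circular: that proposition exhibits a single $d_{p-2}$ in weight $p$, it does not establish that all higher differentials are ``produced by unary operations.''

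The paper's argument avoids any comparison with the universal case and is a direct sparsity argument on the \emph{simplicial filtration degree} of the $E^2$-page, as identified by \Cref{oddE2}. In the Chevalley--Eilenberg complex, elements of $\Lambda^h(\g)$ lie in simplicial degree $h-1$. For weight two the $E^2$-page is concentrated in simplicial degrees $0$ and $1$, so every $d_r$ with $r\geq 2$ has zero target. For weight three with $p\geq 5$, the $E^2$-page lives in simplicial degrees at most $2$, and the paper argues (using $[[x,x],x]=0$ from the Jacobi identity to empty out degree $0$) that it is concentrated in degrees $1$ and $2$, whence again no $d_r$ with $r\geq 2$ can be nonzero. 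This intrinsic degree bound is the key idea you are missing; the Cartan-formula and derivation-property bookkeeping you mention at the end plays no role.
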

\begin{remark}
    When $X=\mathbb{S}^r$ and $k=2,3$, we observe that the $\Fp$-module $H_*(B_k(M;\mathbb{S}^r);\Fp)$ depends on and only on  the cohomology \textit{ring}  $H^*(M^+;\Fp)$ when $r+l$ is even, see \Cref{dependoncupproduct}. This is again in contrast to the case when $r+l$ is odd in the equivalence (\ref{BCT}) \cite{BCT}.
\end{remark}

In follow-up work with Matthew Chen \cite{cz}, we build on the odd primary method in this paper and Drummond-Cole-Knudsen's computation of the rational homology of the unordered configurations space $B_k(\Sigma_g;\mathbb{S})$ \cite{dck} to identify the higher differentials in the Knudsen spectral sequence for $H_*(B_k(\Sigma_g;\mathbb{S});\Fp)$. As a result, we show that for $p\geq 3$, the integral homology of $B_k(M;\mathbb{S})$ has no $p$-power torsion for $M$ a closed torus or a punctured genus $g\geq 1$ surface when $k\leq p$. In particular, the latter case serves as a simpler proof of \cite[Theorem 1.10]{bhk}. 
\subsection{Outline}
In \Cref{section2}, we recall the definition of spectral Lie algebras and the structure of their mod 2 homology as algebras over the monad $\Lie_{\R}$. Then we define the Quillen homology of $\Lie_{\R}$-algebras and the mod $p$ topological Quillen homology of spectral Lie algebras. The two are related by a bar spectral sequence.

In \Cref{section 3}, we provide general upper bounds for the  Quillen homology of $\Lie_{\R}$-algebras  and precise formula in low weights by comparing with the Quillen homology of two variant algebras when $p=2$. Then we explicitly compute the Quillen homology of the $\Lie_{\R}$-algebras $\Sigma^k\F$ and $\Sigma^{-n}\free^{\Mod_{\R}}_{\Mod_{\F}}(\Sigma^{n+k}\F)$.

In \Cref{section4}, we review Knudsen's result that expresses labeled configuration spaces in parallelizable manifolds as topological Quillen objects of certain spectral Lie algebras. In the universal case $M=\mathbb{R}^\infty$, we conjecture patterns of higher differentials.

In \Cref{section5}, we apply our understanding of the Quillen homology of $\Lie_{\R}$-algebras to extract explicit information about the mod 2 homology of labeled configuration spaces, including general upper bounds and low weight computations. Then we extend the methods to $p>2$ to study the odd primary homology of labeled configuration spaces in \Cref{section6}. 

\subsection{Acknowledgements.} The author would like to thank  Jeremy Hahn and Haynes Miller for many discussions and encouragement,  Lukas Brantner, Nir Gadish, Mike Hopkins, Ben Knudsen, Nikolai Konovalov,  and Andrew Senger for helpful conversations, as well as the anonymous referees for their patience and detailed comments. 

The author was partially supported by NSF Grant No. DMS-1906072 and the Danish National Research Foundation through the Copenhagen Centre for Geometry and Topology (DNRF151) during the course of this work.

\subsection{Conventions}
We assume that every object is graded and weighted whenever it makes sense.  For instance, $\Mod_{\Fp}$  stands for the ordinary category of weighted graded $\Fp$-modules.
A weighted graded $\F$-module  $M_\bullet$ is an $\N$-indexed collection of $\mathbb{Z}$-graded $\Fp$-modules $\{M(w)_\bullet\}_{w\in\N}$. The weight grading of an element $x\in M(w)_n$ is $w$, and the internal grading is $|x|=n$. Morphisms are weight preserving morphisms of graded $\Fp$-modules. The Day convolution $\tens$ makes $\Mod_{\Fp}$  a symmetric monoidal category. The Koszul sign rule $x\tens y=(-1)^{|x||y|}y\tens x$ for the symmetric monoidal product $\tens$ depends only on the internal grading and not the weight grading.

Similarly, a shifted Lie algebra $L$ over $\Fp$ is a weighted graded $\Fp$-module equipped with a shifted Lie bracket $[- ,-]:L_m\tens L_n\rightarrow L_{m+n-1}$ that adds weights, as well as satisfying graded commutativity $[x,y]=(-1)^{|x||y|}[y,x]$ and the graded Jacobi identity $$(-1)^{|x||z|}[x,[y,z]]+(-1)^{|y||x|}[y,[z,x]]+(-1)^{|z||y|}[z,[x,y]]=0.$$ When $p=3$, we further require that $[[x,x],x]=0$ for all $x\in L$. Denote by $\Lie_{\mathbb{F}_p}$ the category of shifted weighted graded Lie algebras over $\mathbb F_p$, as well as the monad associated to the free $\Lie_{\Fp}$-algebra functor.
When $p=2$, we use the abbreviation $\Lie=\Lie_{\F}$. We further consider the category $\tiLie$ of totally-isotropic $\Lie$-algebras, i.e.,  $\Lie$-algebras that have vanishing self-brackets. We use the notation $\langle-,-\rangle$ exclusively for $\tiLie$ brackets.

We mean by shifted graded exterior algebra over $\Fp$ a graded $\Fp$-module $M_\bullet$ together with a graded commutative product $M_m\wedge M_n\rightarrow M_{m+n-1}$ such that $x\wedge x=0$ for all $x\in M_\bullet$. We will often omit the adjectives shifted graded for the exterior algebra. 

We use $\Fp$ for both the field $\Fp$ and its Eilenberg-MacLane spectrum. The coefficients for the homology group $H_*(-)$ is $\F$ unless specifically stated.

We use $\pi_n(-)$ to denote the following functors: the functor taking the $n$th homotopy group of a spectrum, an $\Fp$-module spectrum, or a simplicial $\Fp$-module, as well as  the functor taking the $n$th homology group of a chain complex over $\Fp$. 

We use $\pi_{*,*}(-)$ to denote the functor taking the bigraded homotopy groups of a (weighted graded) bisimplicial $\Fp$-module, which is equivalent to taking the homology groups of the total complex of the associated double complex via the generalized Eilenberg-Zilber theorem. The bidegree $(s,t)$ is given by the pair (simplicial degree, internal degree).

\section{Preliminaries}\label{section2}

\subsection{The spectral Lie operad}

We begin with a brief review of the spectral Lie operad. 
Ching \cite{ching} and Salvatore \cite{salvatore} showed that the Goodwillie derivatives $\partial_n(\mathrm{Id})$ of the identity functor $\mathrm{Id}:\mathrm{Top}_*\rightarrow\mathrm{Top}_*$  form an operad $s\lie:=\{\partial_n(\mathrm{Id})\}_n$ in Spectra. This operad is Koszul dual to the nonunital commutative operad $\E^{\mathrm{nu}}_\infty$ via the operadic bar construction  $$s\lie\simeq \mathbb{D}\mathrm{Bar}(1,\E_\infty^{\mathrm{nu}}, 1).$$
For a description of the operadic bar construction, see \cite{ching} for a topological model using trees and \cite[Appendix D]{brantner} for an $\infty$-categorical construction along with a comparison with the topological model.

The $n$th-derivative $\partial_n(\mathrm{Id})$ admits an explicit description due to  Arone and Mahowald \cite{am}, following the work of Johnson \cite{johnson}. Let $\pazocal{P}_n$ be the poset of partitions of the set $\underline{n}=\{1,2,\ldots,n\}$ ordered by refinements, equipped with a $\Sigma_n$-action induced from that on $\underline{n}$. Denote by $\hat{0}$ the discrete partition and $\hat{1}$ the partition $\{\underline{n}\}$. Set $\Pi_n=\pazocal{P}_n-\{\hat{0},\hat{1}\}$. Regarding a poset $\pazocal{P}$ as a category, we obtain via the nerve construction a simplicial set $N_\bullet(\pazocal{P})$. The \textit{partition complex} $\Sigma|\Pi_n|^{\diamond}$, the reduced-unreduced suspension of the realization $|\Pi_n|$,  is modeled by the simplicial set
$$N_\bullet(\pazocal{P}_n)/(N_\bullet(\pazocal{P}_n-\hat{0})\cup N_\bullet(\pazocal{P}_n-\hat{1}))$$ for $n\geq 2$ and the simplicial 0-circle $S^0$ for $n=1$.
Then there is an equivalence $$\partial_n(\mathrm{Id})\simeq\mathbb{D}(\Sigma|\Pi_n|^{\diamond})$$  of spectra with $\Sigma_n$-action, where $\mathbb{D}$ denotes the Spanier-Whitehead dual of a spectrum.

\subsection{Operations on the mod 2 homology of spectral Lie algebras}
Next, we describe the structure on the mod 2 homology of an algebra $L$ over the spectral Lie operad. It consists of a $\Lie$-algebra structure along with Dyer-Lashof like unary operations.

The second structure map of a spectral Lie algebra $L$ is given by $$\xi:  \partial_2(\mathrm{Id})\tens_{h\Sigma_2} L^{\tens 2}\simeq \partial_2(\mathrm{Id})\tens L^{\tens 2}_{h\Sigma_2}\simeq \mathbb{S}^{-1}\tens L^{\tens 2}_{h\Sigma_2}\rightarrow L.$$ At the level of homology, this gives rise to a shifted Lie bracket $$[-,-]: H_m(L)\tens H_n(L)\rightarrow H_{m+n-1}(L),$$ making $H_*(L)$ a  graded shifted Lie algebra \cite[Proposition 5.2]{omar}.

For $L$ a connective spectral Lie algebra, Behrens defined unary operations of weight 2
$$\Q^j:H_d(L)\rightarrow H_{d+j-1}(L)$$ on the mod 2 homology of $L$ 
via $x\mapsto \xi_*\sigma^{-1}Q^j(x),$ where $Q^j:H_d(L)\rightarrow H_{d+j}(L^{\tens 2}_{h\Sigma_2})$ is an extended Dyer-Lashof operation $x\mapsto e_{j-d}\tens x\tens x$, $\sigma^{-1}:H_*(L^{\tens 2}_{h\Sigma_2})\rightarrow H_{*-1}(\partial_2(\mathrm{Id})\tens L^{\tens 2}_{h\Sigma_2})$ is the desuspension isomorphism, and $\xi$ is the second structure map \cite[Section 1.5]{behrens}\cite[Definition 5.4]{omar}. Furthermore, Behrens showed that  the quadratic relations
\begin{equation}
   \Q^r \Q^s=\sum^{r-s-1}_{l= 0}\binom{r-2l-1}{s-l} \Q^{r+s-l}\Q^l 
\end{equation}
for $s<r\leq2s$ generate all the relations among the unary operations on a class in some positive degree \cite[Theorem 1.5.1]{behrens}. By definition, for $x$ a homogeneous class $\Q^i (x)=0$ whenever $i< |x|$. Hence $\Q^r\Q^s(x)=0$ for $|x|\geq 1$ and $r\leq s$. 

Since the extended Dyer-Lashof operations are defined on the mod 2 homology of all nonconnective spectra, the operations $\Q^i$ for all $i\in \mathbb{Z}$  can be defined on the mod 2 homology of any spectral Lie algebra $L$ with $\Q^i (x)=0$ for any homogeneous class $x\in H_*(L)$ and $i<|x|$. Let $\R$ be the quotient algebra of the free algebra over $\F$ on generators $\{\Q^j\}_{j\in\mathbb{Z}}$ by the two sided ideal generated by the relations
\begin{equation}\label{beh}
   \Q^r \Q^s=\sum_{l\leq r-s-1}\binom{r-2l-1}{s-l} \Q^{r+s-l}\Q^l
\end{equation}
for all $r\leq 2s$.

The $\Lie$-bracket interacts with the unary operations in the following way.
\begin{proposition}\cite[Lemma 6.4, 6.5]{omar}
  For any $j\in\mathbb{Z}$ and $x,y$ homogeneous classes in the mod 2 homology of a spectral Lie algebra, we have $[\Q^j(x), y]=0$ and $\Q^{|x|}(x)=[x,x]$.  
\end{proposition}
\begin{remark}

It follows that $\Q^{2|x|-1}\Q^{|x|}(x)=[[x,x],[x,x]]=0.$ This is guaranteed by the Behrens' relations, since $r=2|x|-1\leq s=2|x|$ and the right hand side of (\ref{beh}) vanished due to instability of the extended Dyer-Lashof operations. 
\end{remark}

Sometimes it is more convenient to switch to the lower indexing $\Q_j(x):=\Q^{|x|+j}(x)$, which automatically takes into account the instability condition.

\begin{definition}
The lower indexed  $\R$-algebra is generated by symbols $\Q_j$ for $j\geq 0$  and relations \begin{equation}\label{lowerindexrelaitons}
   \Q_a\Q_b=\sum_{0\leq c< (a+2b-1)/3}\binom{a+b-2c-2}{b-c}\Q_{a+2b-2c}\Q_c
\end{equation} 
for $0\leq a\leq b+1$. When $j<0$ we set $\Q_j=0$. 
\end{definition}

\begin{definition}
An $\F$-module $M_\bullet$ over $\R$ is \textit{allowable} if for any  homogeneous element $x\in M_\bullet$ we have $\Q^{j_1}\Q^{j_2}\cdots\Q^{j_m}(x)=0$ whenever $j_1<j_2+\cdots+j_m+|x|$. 
Alternatively, an allowable $\R$-module $M$ is a module over the lower-indexed $\R$-algebra.
\end{definition}
Now we extend Behrens' results to all spectral Lie algebras.
\begin{proposition}\label{Rbarforall}
    For $L$ any spectral Lie algebra, its mod 2 homology $H_*(L)$ is an allowable module over $\R$. Furthermore, for all $k\geq 0$ and $n\in\mathbb{Z}$ there is an isomorphism of $\F$-modules 
    \begin{align*}
      H_*(\partial_{2^k}(\mathrm{Id})\tens_{h\Sigma_{2^k}}(\mathbb{S}^{n})^{\tens 2^k})&\cong \F\{ \Q^{j_1}\cdots\Q^{j_k}(x_n), j_l>2j_{l+1}\forall l<k,\  j_k\geq n\}\\&\cong\F\{\Q_{i_1}\cdots\Q_{i_k}(x_n), \ \forall l,\  i_l\geq0,\   i_{l}> i_{l+1}+1\}.  
    \end{align*}

\end{proposition}
\begin{proof}
The connectedness assumption in Behrens' proof of \cite[Theorem 1.5.1]{behrens} is necessary only because of the connectedness assumption on the following two inputs to the proof. Kuhn \cite[Example 7.6]{kuhn} (see also \cite[Lemma 1.4.3]{behrens}) showed that for $Y$ a connected space, the transfer $\tau: H_*(Y^{\tens4}_{h\Sigma_4})\rightarrow H_*(Y^{\tens4}_{h\Sigma_2\wr\Sigma_2})$ is given by \begin{equation}\label{eq: kuhntransfer}
   Q^r Q^s\mapsto Q^r \wr Q^s + \sum_{t}\Big[\binom{s-r+t}{s-t}+\binom{s-r+t}{2t-r} \Big]Q^{r+s-t}\wr Q^t.
\end{equation} 
On the other hand, Arone and Mahowald's computation \cite[Theorem 3.16]{am} $$H_*(\partial_{2^k}(\mathrm{Id})\tens_{h\Sigma_{2^k}}(\mathbb{S}^{n})^{\tens 2^k})\cong \F\{ \Q^{j_1}\cdots\Q^{j_k}(x_n), j_l>2j_{l+1}\forall l<k, j_k\geq n\}$$  works for any odd integer $n$, and extends to  positive even integers via the fiber sequence $$\partial_{2m}(\mathrm{Id})\tens_{h\Sigma_{2m}}(\mathbb{S}^n)^{\tens 2m}\xrightarrow{E}\Sigma^{-1}\partial_{2m}(\mathrm{Id})\tens_{h\Sigma_{2m}}(\mathbb{S}^{n+1})^{\tens 2m}\xrightarrow{H}\Sigma^{-1}\partial_{m}(\mathrm{Id})\tens_{h\Sigma_{m}}(\mathbb{S}^{2n+1})^{\tens m},$$ which was obtained by differentiating the EHP sequence \cite[Proposition 4.7]{am}\cite[Corollary 2.1.4]{behrens}. Behrens proved the relations by using the transfer formula and inductively checking that they are compatible with the operadic composition; then he provided a basis by comparing with Arone-Mahowald's answer. Hence we only need to remove the connectedness assumption on both inputs.

Note that Kuhn's transfer formula can be obtained as a consequence of the computation of the transfer map $\tau_0: H_*(B\Sigma_4)\rightarrow H_*(B\Sigma_2\wr\Sigma_2)$ on group homology by Priddy \cite[section 4]{transfer}. For any $j,n\in\mathbb{Z}$, the Dyer-Lashof operation $Q^j$ on a class $x$ in degree $n$ is defined via the canonical isomorphism $H_{n+j}((\Sigma^n\F)^{\tens 2}_{h\Sigma_2})\cong H_{j-n}(B\Sigma_2)[2n]$ \cite{maysq}, where $[k]$ denotes a shift in homological degree by $k$. Similarly, the wreath product $Q^r \wr Q^s$ and the weight 4 operation $Q^r Q^s$  are defined in $H_{j-n}(B\Sigma_2\wr\Sigma_2)[4n]$ and $H_{j-n}(B\Sigma_4)[4n]$ respectively, so the transfer map $\tau$ on a class in degree $n$ of any spectrum $Y$ is a shift of $\tau_0$ by $4n$. Hence the formula (\ref{eq: kuhntransfer}) holds for the transfer map $\tau$ on any spectrum $Y$.

Next we extend the computation of Arone-Mahowald to nonpositive spheres. We make use of the long exact sequence $$\cdots\rightarrow H_*(\Sigma^{-2}\partial_{m}(\mathrm{Id})\tens_{h\Sigma_{m}}(\mathbb{S}^{2n+1})^{\tens m})\xrightarrow{P_*} H_*(\partial_{2m}(\mathrm{Id})\tens_{h\Sigma_{2m}}(\mathbb{S}^{n})^{\tens 2m})\xrightarrow{E_*}H_*(\Sigma^{-1}\partial_{2m}(\mathrm{Id})\tens_{h\Sigma_{2m}}(\mathbb{S}^{n+1})^{\tens 2m})\xrightarrow{H_*}\cdots$$ and isomorphisms $$H_*\Big(\partial_{2m-1}(\mathrm{Id})\tens_{h\Sigma_{2m-1}}(\mathbb{S}^{2n})^{\tens (2m-1)}\Big)\cong H_*\Big(\Sigma^{-1}(\partial_{2m-1}(\mathrm{Id})\tens_{h\Sigma_{2m-1}}(\mathbb{S}^{2n+1})^{\tens (2m-1)})\Big)$$ for all $n$ obtained by Brantner in \cite[4.1.3]{brantner}, cf. \cite[Lemma 4.4]{kjaer}.  There is an equivalence of $\F$-module spectra with $\Sigma_m$-action 
$$\partial_{m}(\mathrm{Id})\tens_{h\Sigma_{m}}(\Sigma^{n}\F)^{\tens m})\simeq \Sigma^{2mn}\partial_{m}(\mathrm{Id})\tens_{h\Sigma_{m}}(\Sigma^{-n}\F)^{\tens m})$$
 for any integers $m, n\geq 0$, where the action on $\Sigma^{2mn}$ is trivial. Hence we obtain an isomorphism $$H_*(\partial_{m}(\mathrm{Id})\tens_{h\Sigma_{m}}(\mathbb{S}^{n})^{\tens m}))\cong H_*(\Sigma^{2mn}\partial_{m}(\mathrm{Id})\tens_{h\Sigma_{m}}(\mathbb{S}^{-n})^{\tens m}))$$ sending $\Q_{j_1}\cdots\Q_{j_k}(\iota_{n})$ to $\sigma^{2^{k+1}n}\Q_{j_1}\cdots\Q_{j_k}(\iota_{-n})$ when $m=2^k$, and both vanish when $m\neq 2^k$ for some $k\geq 0$. This addresses the case of the negative spheres.
 
 For $n=0$, we use the long exact sequence above. It follows from the case $n=1$ that $H_*(\partial_{m}(\mathrm{Id})\tens_{h\Sigma_{m}}(\mathbb{S}^0)^{\tens m})=0$ when $m$ is not a power of 2. Now suppose that $m=2^k$. By the \cite[Proposition 2.2.5]{behrens} (cf. remark after \cite[Proposition 4.3]{kjaer}), the maps $E_*$ and $P_*$ preserve the $\Q$ operations, sending the class $\Q^J(x_n)$ to $\sigma^{-1}\Q^J(x_{n+1})$ and $\sigma^{-2}\Q^Jx_{2n+1}$ to  $\Q^J\Q^n(x_{n})$ respectively. This addresses the case $n=0$. 
\end{proof}

Denote by $\Mod_{\R}$ the category of allowable $\R$-modules and $\free^{\Mod_{\R}}_{\Mod_{\F}}$ the free allowable $\R$-module functor, which is left adjoint to the underlying functor $U^{\Mod_{\R}}_{\Mod_{\F}}:\Mod_{\R}\rightarrow\Mod_{\F}$. We will suppress the adjective allowable from here on. Then there is an additive monad associated with the free $\R$-module functor, which we denote by $\pazocal{A}_{\R}$.

\begin{definition}\cite[Definition 6.1]{omar}\label{evenbracketvanish}
An $\Lie_{\R}$-\textit{algebra} is a graded $\F$-module $L_\bullet$ with a shifted Lie bracket and an (allowable) $\R$-module structure on $L_\bullet$ such that

(1) $\Q_0 (x)=\Q^k (x)=[x,x]$ if $x\in L_k$, and

(2) $[x, \Q^k (y)]=0$ for all $x,y\in L_\bullet$.
\end{definition}

Denote by $\Lie_{\R}$ the category of $\Lie_{\R}$-algebras. 
To describe the free $\Lie_{\R}$-algebra functor, we recall the construction of \textit{Lyndon words} on a set $S$, which provides a basis for the free $\tiLie$-algebra on an $\F$-module with $\F$-basis $S$. 

\begin{construction} \cite{hall}
The Lyndon words on a set $S$ is defined recursively as follows:
The elements of $S$ are Lyndon words of length one and given an arbitrary fixed total ordering. 
Suppose that we have defined Lyndon words of length less than $k$ with a total ordering. Then a Lyndon word of length $k$ is a formal bracket $\langle w_1, w_2\rangle$ such that
\begin{enumerate}
    \item $w_1,w_2$ are Lyndon words whose lengths add up to $k$;
    \item $w_1<w_2$ in the order defined thus far;
    \item To take into account the Jacobi identity, if $w_2=\langle w_3, w_4\rangle$ for some Lyndon words $w_3, w_4$, then we require $w_3\leq w_1$.
\end{enumerate}
To extend the total order to Lyndon words of weight at most $k$, we first impose an arbitrary total ordering on Lyndon words of length $k$, and then declare that they are greater than all Lyndon words of lower weights.
\end{construction}

The free $\Lie_{\R}$-algebra functor can  be computed explicitly as follows:

\begin{proposition}\cite[Proposition 7.4]{omar}\label{freeLR}
Let $V_\bullet$ be an $\F$-module with an ordered basis $B$ of $V_\bullet$. First take the free totally isotropic Lie-algebra with $\langle-,-\rangle$ the free $\tiLie$ bracket. Denote by $B'$ the set of Lyndon words on the letters $B$, which is an $\F$-basis of $\free^{\tiLie}_{\Mod_{\F}}(V_\bullet)$.  Then we take the free $\R$-module on the underlying $\F$-module of $\free^{\tiLie}_{\Mod_{\F}}(V_\bullet)$ and obtain a basis consisting of elements of the form $\Q^I w$ with $w\in B'$. Equip the free  $\R$-module $\free^{\Mod_{\R}}_{\Mod_{\F}}(\tiLie(V_\bullet))$ with a  $\Lie$ bracket $[-,-]$ defined on the induced basis by requiring $[\Q^I w_1, \Q^J w_2]=0$ if $I\neq \emptyset$ or $J\neq \emptyset$, and setting recursively along the ordering on $B'$

1) If $\langle w_1, w_2\rangle$ is a Lyndon word, then $[w_1,w_2]=\langle w_1, w_2\rangle$;

2) $[w,w]:=\Q^{|w|}w$;

3) $[w_1, w_2]:=[w_2, w_1]$ if $w_1>w_2$;

4) $[w_1,w_2]:=[w_3,[w_1, w_4]]+[w_4, [w_1, w_3]]$ if $w_1<w_2$ and $w_2=[w_3,w_4]$ with $w_1<w_3$.
\end{proposition}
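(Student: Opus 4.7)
The plan is to equip $W := \free^{\Mod_{\R}}_{\Mod_{\F}}(\free^{\tiLie}(V_\bullet))$ with the recursively defined bracket, verify it is a $\Lie_{\R}$-algebra, and then establish the universal property. The underlying $\R$-module is free by construction, so every element decomposes uniquely as $\sum \Q^{I_i} w_i$ with $w_i$ Lyndon and $I_i$ admissible. Rules (1)--(4) specify the bracket on pairs of such basis elements; the stipulation $[\Q^I w_1, \Q^J w_2]=0$ when $I\neq\emptyset$ or $J\neq\emptyset$ enforces the axiom $[x, \Q^k y]=0$, while Rule (2) enforces the self-bracket relation on Lyndon generators. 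The content to be verified thus lies in (i) consistency of the recursion on the $\tiLie$-summand, (ii) the full $\Lie_{\R}$-axioms for the resulting bracket, and (iii) universality.

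First I would show the recursion in Rules (1)--(4) consistently defines $[w_1,w_2]$ on all pairs of Lyndon words. Termination is immediate: Rule (3) normalizes pairs so $w_1 < w_2$, and Rule (4) strictly decreases the position of $w_2$ in the Lyndon ordering. The delicate point is consistency between the direct value from Rule (1) (when $\langle w_1, w_2\rangle$ is itself Lyndon) and any recursive expansion from Rule (4) in overlapping cases; by induction on weight this reduces to the classical Hall/Jacobi identities which guarantee that Lyndon words form a basis of the free shifted Lie algebra, with the wrinkle that each self-bracket encountered in a recursion is re-routed into the $\R$-module summand via $[w,w]=\Q^{|w|}w$ rather than killed or sent to a square. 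Graded symmetry is built into Rule (3), and the full Jacobi identity on $W$ reduces by the same induction to Jacobi in $\free^{\tiLie}(V_\bullet)$; triples involving any $\Q$-image are trivially Jacobi since each constituent bracket vanishes. To extend $[w,w]=\Q^{|w|}w$ bilinearly to an arbitrary homogeneous $x=\sum a_i w_i$, note that $[x,x]=\sum a_i^2[w_i,w_i]$ because cross-terms $[w_i,w_j]+[w_j,w_i]$ vanish over $\F$, and on the other side $\Q^{|x|}(\sum a_i w_i)=\sum a_i \Q^{|x|}(w_i)$ by additivity of $\Q^{|x|}$, itself a consequence of $\Q^{|x|}(x+y)-\Q^{|x|}x-\Q^{|x|}y = [x,y]+[y,x]=0$ in characteristic two.

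For the universal property, given a $\Lie_{\R}$-algebra $L$ and an $\F$-linear map $f: V_\bullet \to L$, I would first extend $f$ to a $\tiLie$-morphism $\tilde f_0:\free^{\tiLie}(V_\bullet)\to L$: regarding $L$ with its shifted Lie bracket as a $\tiLie$-algebra (one may formally suppress self-brackets since they are already recorded as $\Q^{|-|}$-images in a different summand of $L$), the universal property of $\free^{\tiLie}$ produces $\tilde f_0$ uniquely. Then extend $\R$-linearly to $\tilde f: W \to L$. Checking that $\tilde f$ is a $\Lie_{\R}$-morphism reduces to basis-level verification: brackets of Lyndon words match by $\tiLie$-compatibility together with the identification $[w,w]=\Q^{|w|}w$, which holds on both sides; brackets involving a $\Q$-image vanish on both sides; and the $\R$-action is preserved by construction. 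Uniqueness follows because $V_\bullet$ generates $W$ under the $\Lie_{\R}$-structure.

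The main obstacle is the consistency check for the recursive bracket on Lyndon words: different recursive pathways leading to the same pair must produce the same value. This is essentially the Hall basis theorem for the free shifted Lie algebra adapted to the $\Lie_{\R}$ setting, where the additional bookkeeping is tracking every $\Q_0$-contribution that arises each time a self-bracket $[w,w]$ appears inside the recursion and gets moved into the $\R$-module summand.
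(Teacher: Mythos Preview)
The paper does not give its own proof of this proposition; it is quoted verbatim from \cite[Proposition 7.4]{omar} and used as a black box. So there is no ``paper's proof'' to compare against. Your outline is a reasonable direct verification along the lines of the classical Hall basis argument, and is presumably close in spirit to what appears in the cited reference.

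That said, two points in your proposal deserve tightening. First, your justification for the additivity of $\Q^{|x|}$ is circular: you invoke a Cartan-type identity $\Q^{|x|}(x+y)-\Q^{|x|}x-\Q^{|x|}y=[x,y]+[y,x]$, which presupposes the very bracket structure you are constructing. The correct (and simpler) reason is that in the free allowable $\R$-module the operations $\Q^j$ are $\F$-linear by definition; no bracket identity is needed. Second, you only verify $[x,x]=\Q^{|x|}x$ for $x$ in the Lyndon summand. For a general homogeneous $x=x_0+x_1$ with $x_1$ in the image of the positive $\R$-action, the bracket side gives $[x,x]=[x_0,x_0]$ (all cross terms and $[x_1,x_1]$ vanish by the imposed rule), so you must also check $\Q^{|x|}(x_1)=0$. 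This is exactly the allowability condition $\Q^{j_1}\Q^{j_2}\cdots=0$ for $j_1<j_2+\cdots+|w|$, applied with $j_1=|x|$; in particular the construction really takes place on the free \emph{allowable} $\R$-module, a point the proposition's wording elides but which the ambient definition of $\Lie_{\R}$-algebra requires.
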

%\begin{remark}\label{freeLieRfiltration}
 %   There is an alternative description of the free $\Lie_{\R}$-algebra on $M$. First take the free $\R$-module on $\Lie_{\F}(M)$ and equip $\free^{\Mod_{\R}}_{\Mod_{\F}}(M))$ with a $\Lie$-structure given by $[x,\Q^j(y)]=0$ for all $x,y$. Then we take the quotient of $\free^{\Mod_{\R}}_{\Mod_{\F}}\Lie_{\F}(M)$ by the sub $\R$-module generated by $\Q_0(x)+[x,x]$ for all $x\in M$. In particular, there is a two-step filtration on $\Lie_{\R}(M)$ induced by the augmentation of the $\R$-module structure. The associated graded is then the composite monad $\A_{\R}\circ\tiLie(V_\bullet)$, since the relation $\Q_0(x)+[x,x]=0$ becomes $[x,x]=0$. The composition of this monad is determined by $\langle-,-\rangle$.
%\end{remark}

Antol\'{i}n-Camarena showed that the monad $\Lie_{\R}$ parametrizes natural operations on the mod 2 homology of connected spectral Lie algebras. The connectivity assumption can be removed in view of \Cref{Rbarforall}. Denote by $\free^{s\lie}$ the free spectral Lie algebra functor  on Spectra given explicitly by $$X\mapsto\bigoplus_{n\geq 1} \partial_n(\mathrm{Id})\tens_{h\Sigma_n} X^{\tens n}.$$
\begin{theorem}\cite[Theorem 7.1]{omar}\label{omar}
The canonical map
$$ \free^{\Lie_{\R}}_{\Mod_{\F}}(
H_*(X;\F))\rightarrow H_*(\free^{s\lie}(X);\F)$$  of  $\Lie_{\R}$-algebras is an isomorphism for any spectrum $X$.
\end{theorem}
\begin{proof}
Behrens proved the theorem in the case when $X=\mathbb{S}^k$, $k>0$.  Antol\'{i}n-Camarena proved the isomorphism for $X$ a connected spectrum follows: 
To extend Behrens' theorem to a finite wedge of spheres, he made use of a result of Arone and Kankaarinta that applies Goodwillie calculus to the Hilton-Milnor Theorem \cite[Theorem 0.1]{ak}. To extend to all connected spectra, note that $X\tens \F$ can be written as a filtered  colimit of finite wedges of $\mathbb{S}^m \tens \F$ in the category of $\F$-module spectra. The same arguments work to extend the isomorphism in \Cref{Rbarforall} to all spectra.
\end{proof}

The category $\Mod_{\R}$ is stable under the desuspension functor $\Omega:=\Sigma^{-1}$ of $\F$-modules since the extended Dyer-Lashof operations are. Namely, for $M\in\Mod_{\R}$, the $\F$-module $\Omega M$ has an $\R$-module structure given by $\Q^j(\sigma^{-1}x)=\sigma^{-1}\Q^j(x)$ for any  $x\in M$. As a result, for $\g$ any $\Lie_{\R}$-algebra, there is an $\Lie_{\R}$-structure on $\Omega \g$ such that the bracket is trivial.

\begin{proposition}\label{stablization}
     There is a natural $\Lie_{\R}$-module structure on $\Omega^n \free^{\Mod_{\R}}_{\Mod_{\F}}(\Sigma^{n+k}\F)$ for $1\leq n\leq \infty$, where the bracket is trivial and  $\Q^j$ acts by $x\mapsto \sigma^{-n} \Q^j(\sigma^n x)$. The canonical map $$ \free^{\Mod_{\R}}_{\Mod_{\F}}(\Sigma^{k}\F)\cong\free^{\Lie_{\R}}_{\Mod_{\F}}(\Sigma^{k}\F)\rightarrow\Omega^n \free^{\Mod_{\R}}_{\Mod_{\F}}(\Sigma^{n+k}\F)$$ is surjective.
\end{proposition}
\begin{proof}
    There is a canonical colimit-to-limit comparison map
\begin{equation}
    \free^{s\mathcal{L}}(\Sigma^{k} \F)\rightarrow\Omega\free^{s\mathcal{L}}( \Sigma^{k+1} \F)
\end{equation}
of spectral Lie algebras over $\F$, which after taking homotopy groups is the composite of the top and right arrows of the diagram
\begin{center}
    \begin{tikzcd}
        \free^{\Mod_{\R}}_{\Mod_{\F}}(\Sigma^{k} \F)\ar[r]\ar[d,"ev"]&\free^{\Lie_{\R}}_{\Mod_{\F}}(\Omega\free^{\Mod_{\R}}_{\Mod_{\F}}(\Sigma^{k+1} \F))\ar[d,"ev"]\\
        \Sigma^{k} \F\ar[r,hook,"i"]&\Omega\free^{\Mod_{\R}}_{\Mod_{\F}}(\Sigma^{k+1} \F)
    \end{tikzcd}.
\end{center}
Let $x$ be the generator of $\Sigma^k\F$. By naturality of the $\Q^j$ operation, the class $\Q^j(x)$ on the top left corner is mapped to $\Q^j(i(x))$, which is sent to $\sigma^{-1}\Q^j\sigma(x)$ under evaluation. In general $\Q^J(x)$ is mapped to $\sigma^{-1}\Q^J\sigma(x)$ for any sequence $J$. Since the $\Lie$ bracket of operations always vanishes and $$[i(x),i(x)]=\Q^{|x|}(i(x))=\sigma^{-1}Q^{|x|}(\sigma x)=0,$$ the $\Lie$-bracket is trivial on $\Omega\free^{\Mod_{\R}}_{\Mod_{\F}}(\Sigma^{k+1} \F)$.   Applying \Cref{omar}, we see that the composite is surjective since $|\sigma(x)|=|x|+1$. Iterating the construction yields the claim.
\end{proof}
\subsection{Quillen homology of spectral Lie algebras}
Now we introduce the main object of interest.
The inclusion of trivial $\Lie_{\R}$-algebras admits a left adjoint $Q^{\Lie_{\R}}_{\Mod_{\F}}$ called the indecomposable functor, i.e. we have an adjunction
\begin{center}
\begin{tikzpicture}[node distance=2.8cm, auto]
\pgfmathsetmacro{\shift}{0.3ex}
\node (P) {$\Lie_{\R}$};
\node(Q)[right of=P] {$\Mod_{\F}$\ .};

\draw[transform canvas={yshift=0.5ex},->] (P) --(Q) node[above,midway] {\footnotesize $Q^{\Lie_{\R}}_{\Mod_{\F}}$};
\draw[transform canvas={yshift=-0.5ex},->](Q) -- (P) node[below,midway] {\footnotesize $T^{\Lie_{\R}}_{\Mod_{\F}}$}; 
\end{tikzpicture}
\end{center}
Denote again by $\Lie_{\R}$ the monad associated to the free $\Lie_{\R}$-algebra functor.

We would like to understand the left derived functor of this left adjoint, and we take a small detour to deal with the model structure. We mainly follow Sections 3.1 and 3.2 of \cite{jn} and Section 4 of \cite{bhk}.
 
\subsubsection{The derived indecomposable functor}
 Let $\mathbf{T}$ be an augmented monad on the category $\Mod  _{k}$ of weighted graded $k$-modules, where $k$ is a field. Denote by $\mathrm{Alg}_{\mathbf{T}}(\Mod  _{\F})$ the category of $\mathbf{T}$-algebras. The forgetful functor $U:\mathrm{Alg}_{\mathbf{T}}(\Mod  _k)\rightarrow \Mod  _k$ admits a left adjoint, the free functor $\free^{\mathbf{T}}:\Mod  _k\rightarrow\mathrm{Alg}_{\mathbf{T}}(\Mod_k)$. 

Denote by $s\Mod_k$ the category of simplicial weighted graded $k$-modules. Levelwise application of the adjunction $\free^{\mathbf{T}}\dashv U$ gives rise to an adjunction  between the corresponding categories of simplicial objects $$\free^{\mathbf{T}}\dashv U:\mathrm{Alg}_{\mathbf{T}}(s\Mod_{k})\rightarrow s\Mod_{k},$$ as well as a monad $\mathbf{T}$ on $s\Mod_{k}$. We equip $s\Mod_{k}$ with the standard cofibrantly generated model structure. Suppose that the path objects of $s\Mod_{k}$ lifts to $s\mathrm{Alg}_\mathbf{T}$, the category of simplicial $\mathbf{T}$-algebras. Then this adjunction induces a right transferred model structure on the category of simplicial $\mathbf{T}$-algebras, with weak equivalences and fibrations defined on the underlying simplicial weighted graded $k$-modules by \cite[Theorem 3.2, Remark 3.3]{jn}. In particular, this is true for $\mathbf{T}=\R, \Lie, \tiLie, \Lie_{\R}$, cf. \cite[Proposition 3.4, 4.14]{bhk}.

Denote by $T^{\mathbf{T}}:\Mod  _{k}=\mathrm{Alg}_{\mathbf{\mathrm{Id}}}(\Mod  _{k})\rightarrow\mathrm{Alg}_{\mathbf{T}}(\Mod_{k})$ the inclusion of trivial $\mathbf{T}$-algebras, which is induced by the augmentation. It has a left adjoint $Q^{\mathbf{T}}:\mathrm{Alg}_{\mathbf{T}}(\Mod  _{k})\rightarrow \Mod_{k}$, the indecomposable functor with respect to the $\mathbf{T}$-algebra structure, which satisfies $Q^{\mathbf{T}}\circ \free^{\mathbf{T}}\simeq \id$. Applying this adjunction levelwise to the corresponding categories of simplicial objects, we obtain a Quillen adjunction  $$Q^{\mathbf{T}}\dashv T^{\mathbf{T}}:s\mathrm{Alg}_\mathbf{T}\rightarrow s\Mod_{k}.$$ The left derived functor $\mathbb{L}Q^{\mathbf{T}}$ of $Q^{\mathbf{T}}$ can be computed by the following standard recipe.

\begin{construction}\label{constr: simplicial bar}
Given a right module $R:\Mod_{k}\rightarrow\mathcal{D}$ over $\mathbf{T}$, and a simplicial object $A$ in $\mathrm{Alg}_{\mathbf{T}}(\Mod _{k})$, one can apply the two-sided bar construction $\B(R,\mathbf{T},-)$ levelwise to $A$. The diagonal of the resulting bisimplicial complex is a simplicial object in $\mathcal{D}$, denoted by $\B(R,\mathbf{T}, A)$.
\end{construction}
In particular, if we regard a $\mathbf{T}$-algebra $A$ as the constant simplicial object on $U(A)$ equipped with a simplicial $\mathbf{T}$-algebra structure, denoted also as $A$ by abuse of notation, then $\B(R,\mathbf{T}, A)$ agrees with the usual two-sided bar construction.

Since the free resolution $\B(\free^{\mathbf{T}},\mathbf{T},A)$ is a cofibrant replacement of $A$ in the category of simplicial $\mathbf{T}$-algebras, the left derived functor of a functor $F$ can be computed by applying $F$ levelwise to a cofibrant replacement, so $$\mathbb{L}Q^{\mathbf{T}}(A)\simeq Q^\mathbf{T}\B(\free^{\mathbf{T}},\mathbf{T},A)=\B(\mathrm{id},\mathbf{T},A). $$ 

Now suppose that we have a composite monad $\mathbf{
R
}\circ \mathbf{L}$ in $\Mod_k$ with distributive law the natural transformation $\mathbf{L}\circ \mathbf{R}\Rightarrow\mathbf{R}\circ \mathbf{L}$ in the sense of Beck \cite[Section 1]{beck}. Suppose in addition that $\mathbf{L}, \mathbf{R}$ and $\mathbf{R}\circ \mathbf{L}$ are all compatibly augmented and each admit a cofibrant replacement given by the free resolution. 
Let $\mathrm{Alg}_{\mathbf{L}},\mathrm{Alg}_{\mathbf{R}}, \mathrm{Alg}_{\mathbf{R}\circ\mathbf{L}}$ be the respective categories of algebras. Then an $\mathbf{R}\circ\mathbf{L}$-algebra $A$ is an $\mathbf{R}$-algebra via the forgetful map $U^{\mathbf{R}\circ\mathbf{L}}_{\mathbf{R}}:\mathrm{Alg}_{\mathbf{R}\circ\mathbf{L}}\rightarrow\mathrm{Alg}_{\mathbf{R}}$ induced by the augmentation of $\mathbf{L}$, and an $\mathbf{L}$-algebra via the augmentation of $\mathbf{R}$.
Furthermore, we have adjunctions 
\begin{center}
\begin{tikzpicture}[node distance=2.8cm, auto]
\pgfmathsetmacro{\shift}{0.3ex}
\node(P) {$\Mod _{k}$};
\node(Q)[right of =P] {$\mathrm{Alg}_{\mathbf{L}}$};
\node(R)[right of =Q]{$\mathrm{Alg}_{\mathbf{R}\circ\mathbf{L}}$};
\draw[transform canvas={yshift=-0.5ex},->] (P) --(Q) node[below,midway] {\footnotesize $T^{\mathbf{L}}$};
\draw[transform canvas={yshift=0.5ex},->](Q) -- (P) node[above,midway] {\footnotesize $Q^{\mathbf{L}}$}; 
\draw[transform canvas={yshift=-0.5ex},->] (Q) --(R) node[below,midway] {\footnotesize $T^{\mathbf{R}\circ\mathbf{L}}_{\mathbf{L}}$};
\draw[transform canvas={yshift=0.5ex},->](R) -- (Q) node[above,midway] {\footnotesize $Q^{\mathbf{R}\circ\mathbf{L}}_{\mathbf{L}}$}; 
\end{tikzpicture}.
\end{center}

\begin{construction}\label{constrn:composite simplicial}
For $A$ an algebra over $\mathbf{R}\circ\mathbf{L}$, the free resolution $\B(\free^{\mathbf{R}}, \mathbf{R},A)$ has the structure of a simplicial $\mathbf{R}\circ\mathbf{L}$-algebra given as follows. Levelwise, the $\mathbf{R}\circ\mathbf{L}$-algebra structure map is given by $$(\mathbf{R}\circ\mathbf{L})\circ\mathbf{R}^{\circ n}(A)\rightarrow\mathbf{R}\circ(\mathbf{R}\circ\mathbf{L})\circ\mathbf{R}^{\circ (n-1)}(A)\rightarrow\cdots\rightarrow\mathbf{R}^{\circ n}\circ(\mathbf{R}\circ\mathbf{L})(A)\rightarrow\mathbf{R}^{\circ n}(A),$$ where the rightmost arrow is the $\mathbf{R}\circ\mathbf{L}$-algebra structure map on $A$ and the other arrows are induced from the distributive law $\mathbf{L}\circ\mathbf{R}\Rightarrow\mathbf{R}\circ\mathbf{L}$. The face and degeneracy maps are structure maps of the monad $\mathbf{R}$ and hence compatible with the levelwise $\mathbf{R}\circ\mathbf{L}$-algebra structure maps by naturality of the distributive law. 

Levelwise application of $Q^{\mathbf{R}\circ\mathbf{L}}_{\mathbf{L}}$ to $\B(\free^{\mathbf{R}}, \mathbf{R},A)$  yields a simplicial $\mathbf{L}$-algebra structure on the bar construction $\B(\id, \mathbf{R},A)=Q^{\mathbf{R}\circ\mathbf{L}}_{\mathbf{L}}\B(\free^{\mathbf{R}}, \mathbf{R},A)$.
\end{construction} 
We record the following result about factoring the left derived functor of the indecomposable functor of a composite monad, which generalizes \cite[Proposition 4.19]{bhk}.
\begin{lemma}\label{factortiLierg}
    Let $A$ be an $\mathbf{R}\circ\mathbf{L}$-algebra. The homotopy groups of $\B(\id,\mathbf{R}\circ\mathbf{L},A )$ are computed by the homotopy groups of the bisimplicial object $\B(\id,\mathbf{L},\B(\id,\mathbf{R},A))$.
\end{lemma}
 Recall that the homotopy groups of a bisimplicial $k$-module can be computed via the Eilenberg-Zilber theorem, i.e. by first taking associated chain complexes in both directions and then forming the total complex of the double complex. See for instance \cite[Chapter 4]{GJ}.
 \begin{proof}
     The augmentation $\mathbf{R}\circ\mathbf{L}\rightarrow\mathbf{R}$ induces a map of simplicial $\mathbf{R}\circ\mathbf{L}$-algebras $$\Psi:\B(\free^{\mathbf{R}\circ\mathbf{L}},\mathbf{R}\circ\mathbf{L},A)\rightarrow\B(\free^{\mathbf{R}},\mathbf{R},A),$$ where the simplicial $\mathbf{R}\circ\mathbf{L}$-algebra structure on the target is given by \Cref{constrn:composite simplicial}. This is an equivalence since both are free resolutions of $A$ as an $\mathbf{R}\circ\mathbf{L}$-algebra and an $\mathbf{R}$-algebra respectively, and weak equivalences in $s\mathrm{Alg}_{\mathbf{R}\circ\mathbf{L}}$ are detected by the underlying simplicial $k$-modules. We want to show that $Q^{\mathbf{R}\circ\mathbf{L}}_{\mathbf{L}}$ preserves this weak equivalence. Since $U^{\mathbf{L}}$ preserves weak equivalences, it suffices to show that  $U^{\mathbf{L}}\circ Q^{\mathbf{R}\circ\mathbf{L}}_{\mathbf{L}}\circ \Psi$ is a weak equivalence.

     Note that there is an isomorphism $$Q^{\mathbf{R}}\circ U^{\mathbf{R}\circ\mathbf{L}}_{\mathbf{R}} \cong U^{\mathbf{L}}\circ Q^{\mathbf{R}\circ\mathbf{L}}_{\mathbf{L}}.$$ Hence   $U^{\mathbf{L}}\circ Q^{\mathbf{R}\circ\mathbf{L}}_{\mathbf{L}}\circ\Psi$ is the map 
     \begin{align*}
        Q^{\mathbf{R}}\circ U^{\mathbf{R}\circ\mathbf{L}}_{\mathbf{R}} \B(\free^{\mathbf{R}\circ\mathbf{L}},\mathbf{R}\circ\mathbf{L},A)&\rightarrow Q^{\mathbf{R}}\circ U^{\mathbf{R}\circ\mathbf{L}}_{\mathbf{R}}\B(\free^{\mathbf{R}},\mathbf{R},A)\\&\simeq Q^{\mathbf{R}}\B(\free^{\mathbf{R}},\mathbf{R},A)\simeq\B(\id,\mathbf{R},A).
     \end{align*}
     Since both $U^{\mathbf{R}\circ\mathbf{L}}_{\mathbf{R}} \B(\free^{\mathbf{R}\circ\mathbf{L}},\mathbf{R}\circ\mathbf{L},A)$ and $\B(\free^{\mathbf{R}},\mathbf{R},A)$ are free resolutions of $A$ in $s\mathrm{Alg}_{\mathbf{R}}$ and $Q^{\mathbf{R}}$ is a left Quillen functor, this is indeed a weak equivalence. Hence 
     \begin{align*}
         \B(\id,\mathbf{L},\B(\id,\mathbf{R},A))&\simeq Q^{\mathbf{L}}\circ Q^{\mathbf{R}\circ\mathbf{L}}_{\mathbf{L}}\B(\free^{\mathbf{R}\circ\mathbf{L}},\mathbf{R}\circ\mathbf{L},A)\\
         &\simeq Q^{\mathbf{L}}\circ Q^{\mathbf{R}\circ\mathbf{L}}_{\mathbf{L}}\B(\free^{\mathbf{R}},\mathbf{R},A)\\
        & \simeq Q^{\mathbf{L}}\B(\id,\mathbf{R},A),
     \end{align*}
     and the lemma follows from \Cref{constr: simplicial bar}.
 \end{proof}

\subsubsection{Quillen homology of $s\Lie_{R}$-algebras}
Since the path object of $s\Mod_{\F}$ lifts to $s\Lie_{R}$, the discussion in the previous subsection guarantees that any $\Lie_{\R}$-algebra $\g$ admits a free  resolution $\B(\free^{\Lie_{\R}}_{\Mod_{\F}}, \Lie_{\R}, \g)$ in $\Lie_{\R}$.
The left derived functor of $Q^{\Lie_{\R}}_{\Mod_{\F}}$ is thus computed by 
$$\mathbb{L}Q^{\Lie_{\R}}_{\Mod_{\F}}(\g)\simeq Q^{\Lie_{\R}}_{\Mod_{\F}}\B(\free^{\Lie_{\R}}_{\Mod_{\F}}, \Lie_{\R}, \g)\simeq \B(\mathrm{id}, \Lie_{\R}, \g),$$
where $\mathrm{id}:\Mod _{\F}\rightarrow\Mod _{\F}$ is the identity functor considered as the trivial right module over the monad $\Lie_{\R}$ with structure map the augmentation.

\begin{definition}
The \textit{Quillen homology} of a $\Lie_{\R}$-algebra $\g$, denoted by $\AQ_*(\g)$, is the total left derived functor  $$\AQ_{*,*}(\g):=H_{*,*}\mathbb{L}Q^{\Lie_{\R}}_{\Mod_{\F}}(\g)\simeq \pi_{*,*}\B\big(\mathrm{id}, \Lie_{\R}, \g\big).$$

\end{definition}
We are interested in computing the Quillen homology of $\Lie_{\R}$-algebras, since it helps to understand the spectral Lie analog of the mod $p$ topological Andr\'{e}-Quillen homology of nonunital $\mathbb{E}_\infty$-algebras introduced by Kriz \cite{krizbp} and  Basterra \cite{taq}.

\begin{definition}
    For $L$ a spectral Lie algebra, its \textit{topological Quillen object} is the bar construction $$\TQ(L):=|\B(\id, s\lie, L)|.$$ We define its mod $p$ \textit{topological Quillen homology} to be  $$\TQ_{*}(L; \Fp):=\pi_*(|\B(\id, s\lie, L)|\tens\Fp).$$
\end{definition}
Using the skeletal filtration of the geometric realization of the bar construction, we obtain a bar spectral sequence
$$ E^2_{s,t}=\pi_s\pi_t \B\big(\mathrm{id}, s\lie, L\tens \Fp)\Rightarrow \TQ_{s+t}(L;\Fp)$$ converging to the mod $p$ topological Quillen homology. When $p=2$, we can apply Theorem \ref{omar} repeatedly and  deduce that:

\begin{proposition}\label{barsseq}
    There is a bar spectral sequence
    $$E^2_{s,t}=\pi_{s,t}\B\big(\mathrm{id}, \Lie_{\R},\pi_*(L\tens\F)\big)\cong \AQ_{s,t}(H_*(L;\F))\Rightarrow \TQ_{s+t}(L;\F).$$
\end{proposition}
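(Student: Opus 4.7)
The plan is to combine the bar spectral sequence coming from the skeletal filtration of the geometric realization with a termwise application of Theorem \ref{omar}.

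Since smashing with $\F$ commutes with geometric realization, $|\B(\id,s\lie,L)|\tens\F\simeq |\B(\id,s\lie,L)\tens\F|$, whose homotopy computes $\TQ_*(L;\F)$ by definition. The skeletal filtration of this realization yields a conditionally convergent spectral sequence with
$$E^1_{s,t}=\pi_t\big(\B_s(\id,s\lie,L)\tens\F\big)=H_t\big((s\lie)^s(L);\F\big),$$
whose $d^1$-differential is the alternating sum of face maps, so $E^2_{s,t}=\pi_s\pi_t\B(\id,s\lie,L\tens\F)$ converging to $\TQ_{s+t}(L;\F)$.

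Next, I would identify the simplicial $\F$-module $\pi_*\B(\id,s\lie,L\tens\F)$ with $\B(\id,\Lie_{\R},H_*(L;\F))$. Since each iterate $(s\lie)^k(L)$ is again a spectrum, applying Theorem \ref{omar} inductively gives, in each simplicial degree $s$, a natural isomorphism of $\Lie_{\R}$-algebras
$$H_*\big((s\lie)^s(L);\F\big)\cong (\Lie_{\R})^s\big(H_*(L;\F)\big).$$
The face maps of $\B(\id,s\lie,L)$ are assembled from the augmentation $s\lie\to\id$, the monad multiplication $s\lie\circ s\lie\to s\lie$, and the $s\lie$-algebra structure map $s\lie(L)\to L$; under the above isomorphism, naturality of Theorem \ref{omar} identifies each of these with the corresponding face maps built from the monad structure on $\Lie_{\R}$ together with the induced $\Lie_{\R}$-algebra structure on $H_*(L;\F)$, with degeneracies handled analogously via the unit.

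The only subtlety is that the isomorphism of Theorem \ref{omar} must be natural \emph{as $\Lie_{\R}$-algebras}, not merely as $\F$-modules, in order for the simplicial structure to transfer. This is built into the statement of that theorem, whose proof identifies the $\R$-module and shifted Lie structure on $H_*(\free^{s\lie}(X);\F)$ as the image of the $s\lie$-structure on $\free^{s\lie}(X)$ under the homology functor. Combining the two steps then gives $E^2_{s,t}\cong \pi_{s,t}\B(\id,\Lie_{\R},H_*(L;\F))=\AQ_{s,t}(H_*(L;\F))$, as claimed.
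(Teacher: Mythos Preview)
Your proof is correct and follows essentially the same approach as the paper: the paper simply notes that the skeletal filtration of the geometric realization yields the bar spectral sequence, and then one applies Theorem~\ref{omar} repeatedly to identify the $E^2$-page levelwise. Your added discussion of naturality (ensuring the face and degeneracy maps match under the identification) is a reasonable elaboration of what the paper leaves implicit.
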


\section{Computing the Quillen homology of spectral Lie algebras}\label{section 3}
In this section, we study the Quillen homology of  $\Lie_{\R}$-algebras when $p=2$ via comparison with two smaller double complexes that are easy to compute via Koszul duality arguments.

\subsection{May-type spectral sequence and an upper bound}
First we find an upper bound 
 for $\pi_{*,*}\B(\id,\Lie_{\R},\g)$ by constructing a May-type spectral sequence. The dimensions of its $E^1$-page is bounded above by the homotopy groups of the bar construction of the following variant of $\Lie_{\R}$-algebras whose unary and binary operations do not intertwine. 

\begin{definition}\label{DefinetiLieR}
Define a $\tiLie_{\R}$-algebra to be an $\F$-module $L$  with an allowable $\R$-module structure and a $\tiLie$-bracket $\langle-, -\rangle$ such that $\langle x,\Q^i(y)\rangle=0$ for all $x,y\in L$.
Denote by $\tiLie_{\R}$ the category of $\tiLie_{\R}$-algebras and also the monad associated to the free $\tiLie_{\R}$-algebra functor. 
\end{definition}

The underlying $\F$-module of the free $\tiLie_{\R}$-algebra on an $\F$-module $V$ is given by that of $\A_{\R}\circ\tiLie(V)$. Hence $\tiLie_{\R}$ admits an alternative description as the category of algebras over the composite monad $\A_{\R}\circ\tiLie$, with distributive law the natural transformation $\tiLie\circ\A_{\R}\Rightarrow\A_{\R}\circ\tiLie$ determined by $\langle -, \Q^i(-) \rangle=0$ for all $i$.
 \begin{remark}\label{levelwiseisom}
    Comparing with \Cref{freeLR}, we see that the underlying $\R$-modules of the free $\Lie_{\R}$ and $\tiLie_{\R}$-algebra on any $\F$-module agree. The only difference between the two free functors is that in the latter we do not change the $\tiLie$-algebra to a $\Lie$-algebra via the identification $\Q_0(x)=[x,x]$.
\end{remark}
 In particular, the bar construction $\B(\id,\Lie_{\R},\g)$ is levelwise isomorphic to $\B(\id,\tiLie_{\R},\g)$. The latter has simpler face maps in the sense that the face maps preserve the unary and binary structures respectively, whereas in the former, a Lie bracket that is not a self-bracket can be mapped to a self-bracket. To deal with these face maps, we draw inspiration from the May spectral sequence: suppose that we want to compute the Ext groups over a Hopf algebroid $(A,\Gamma)$. In good cases, there exists a filtration on $\Gamma$ such that the associated graded is a Hopf algebra $(A, \Gamma')$, i.e. the left and right unit are equal. Then we obtain a May spectral sequence with $E^1$-page the Ext group over the Hopf algebra $\Gamma'$, whose cochain complex has differentials simpler than the cobar complex for $\Gamma$. The higher differentials are determined by the difference between the left and the right unit.
 
 To construct a filtration on $\B(\id,\Lie_{\R},\g)$ so that the associated graded assembles to $\B(\id,\tiLie_{\R},\g)$,  first we need to construct a filtration on any $\Lie_{\R}$-algebra so that the two sides of the identification $\Q_0(x)=[x,x]$ live in different filtrations.

 \begin{construction}[Length filtration]\label{lengthfiltration}
    Consider the complete filtration $$\cdots\rightarrow\R(n)\rightarrow\R(n-1)\rightarrow\cdots\rightarrow \R(1)\rightarrow\R$$ of the homogeneous algebra $\R$, where $\R(n)$ is the ideal generated by monomials $\Q^I$ with $|I|=n$. Thus we obtain functors $\A_{\R(n)}$ on $\Mod_{\F}$, sending $M$ to the submodule of $\A_{\R}(M)$ consisting of $\Q^{I}(x)$ for $x\in M$ and $|I|\geq n$. In other words, this assembles to a filtered monad $\hat{\pazocal{A}}_{\R}$. The images of the induced evaluation maps $\A_{\R(q)}(M)\xrightarrow{\mathrm{ev}_q} M$ form a complete decreasing filtration for any $\R$-module $M$. Taking cokernels yields a complete increasing filtration $$\mathrm{F}_l^q(M)=\mathrm{coker}(\A_{\R(q)}(M)\xrightarrow{\mathrm{ev}_q} M).$$ We call this the \textit{length filtration} of $M$, which gives rise to a filtered object $\hat M$ as an algebra over the filtered monad $\hat{\pazocal{A}}_{\R}$ whose underlying $\R$-module is $M$.
\end{construction}

Given an arbitrary $\Lie_{\R}$-algebra $\g$, we would like  to equip $\g$ with the structure of an $\tiLie_{\R}$-algebra. This boils down to producing a method that equips any $\Lie$-algebra with a $\tiLie$-algebra structure.
\begin{construction}($\tiLie$-structure on $\Lie$-algebras.) \label{tibracket}
    For $\g$ is $\Lie$-algebra with bracket $[-,-]$, let $V'$ be the ideal of self-brackets. Thus we obtain a two-step filtration $V'\rightarrow\g$ of $\g$. Denote  by $\langle -, -\rangle$ the canonical $\tiLie$-bracket on the quotient $V=\g/V'=Q^{\Lie}_{\tiLie}(\g)$ and consider $V'$ as a trivial $\tiLie$-algebra. Thus we obtain a $\tiLie$-structure on the associated graded of $\g$ as the product of $V$ and $V'$ with the above $\tiLie$-structures. Denote by $\tilde{\g}$ the resulting $\tiLie$-algebra with $\langle-,-\rangle$ the $\tiLie$-bracket.
\end{construction}

%\begin{construction}($\tiLie$-structure on $\Lie$-algebras.) \label{tibracket}
%There is an inclusion $T^{\Lie}_{\tiLie}: \tiLie\rightarrow\Lie$ with left adjoint $Q^{\Lie}_{\tiLie}$ taking the indecomposable of self brackets. Let $\g$ be a $\Lie$-algebra with $\Lie$-bracket $[-,-]$. Since $[[x,x],y]=0$ for all $x,y\in\g$ by the Jacobi identity, there is a split short exact sequence of $\Mod_{\F}$-modules $$V'\rightarrow\g\xrightarrow{p} V=Q^{\Lie}_{\tiLie}(\g),$$ where $V'$ is the ideal of self-brackets in $\g$. Note that this is not split as a short exact sequence of $\Lie$-algebras. Fix an isomorphism $\g\cong V'\oplus W$ of $\F$-modules such that $W$ is isomorphic to $V$ under the restriction of $p$. Denote  by $\langle -, -\rangle$ the canonical $\tiLie$-bracket on $V=Q^{\Lie}_{\tiLie}(\g)$ and consider $V'$ as a trivial $\tiLie$-algebra. Thus we obtain a $\tiLie$-structure on the underlying $\F$-module of $\g$ as the product of $V$ and $V'$ with the above $\tiLie$-structures. Denote by $\tilde{\g}$ the resulting $\tiLie$-algebra with $\langle-,-\rangle$ the $\tiLie$-bracket.\end{construction}
Therefore, any $\Lie_{\R}$-algebra $\g$ admits a $\tiLie$-structure that is unique up to a choice of splitting of $\g\rightarrow V$. Denote by $\tilde{\g}$ the resulting $\tiLie_{\R}$-algebra, which has the same underlying $\R$-module structure as $\g$, cf. \Cref{levelwiseisom}.
\begin{remark}\label{rewritebracket}
    If we fix a choice of splitting for $\g\rightarrow V$, then any $\Lie$ bracket $[x,y]$ in $\g$ is equal to a sum of self-brackets and the $\tiLie$-brackets $\langle x,y\rangle$ in $\tilde{\g}$.
\end{remark}

Now we compute the $E^2$-page of the bar spectral sequence by constructing a May-type spectral sequence in the sense that the filtration comes from the length filtration of $\R$-modules in \Cref{lengthfiltration}.

\begin{theorem}\label{upperbound}
Let $\g$ be a $\Lie_{\R}$-algebra and $\tilde\g$ the associated $\tiLie_{\R}$-algebra via \Cref{tibracket}. Then there is a May-type spectral sequence with respect to the $\R$-module structure converging to  $\pi_{s,t}\B(\id, \Lie_{\R},\g).$
The $E^1$-page $\bigoplus_{q}E^1_{q,s,t}$ of the May-type spectral sequence has
dimensions bounded above by $\pi_{s,t}\B(\id, \tiLie_{\R},\tilde{\g})$, in the sense that there is another algebraic spectral sequence converging to the May $E^1$-page whose $E^1$-page is $\pi_{s,t}\B(\id, \tiLie_{\R},\tilde{\g})$.
\end{theorem}
\begin{proof}
We start by inductively constructing a filtration on $(\Lie_{\R})^{\circ n}(\g)$ that heuristically count the number of $\Q$ symbols in a given element.

Since any $\Lie_{\R}$-algebra $\g$ is an $\R$-module, it admits a length filtration. The filtration is compatible with the bracket since brackets of operations always vanish (\Cref{evenbracketvanish}). Furthermore, since any self-bracket $[x,x]=\Q_0(x)$ is in $\mathrm{F}_l^1(\g)$ and the right hand side is zero in $\mathrm{F}_l^0(\g)$, we deduce that $\mathrm{Gr}_l^0(\g)$ is a $\tiLie$-algebra, and the $\tiLie$-structure can be extended to $\bigoplus \mathrm{Gr}^q(\g)$ via trivial extension to positive $q$. On the other hand,  $\bigoplus \mathrm{Gr}^q(\g)$ is an $\R$-module since $\R$ is homogeneous. Hence $\tilde{g}=\bigoplus \mathrm{Gr}_l^q(\g)$ equipped with the $\tiLie$-bracket in \Cref{tibracket} is an algebra over the composite monad $\tiLie_{\R}=\A_{\R}\circ\tiLie$. 
  
Now we define a new filtration $F^\bullet$ on $\Lie_{\R}(\g)$ that combines the length filtration on $\g$, the length filtration on $\Lie_{\R}(M)$ for any $\F$-module $M$, and the effect of $\Lie$ brackets. We extend the length filtration on $\g$ to $\tiLie(\g)$ via the Day convolution, i.e. for $x\in \mathrm{F}_l^q(\g), y\in \mathrm{F}_l^r(\g)$, we have $\langle x, y\rangle\in F^{q+r}(\tiLie(\g))$, so on and so forth. Then we extend it to a new filtration on $\Lie_{\R}(\g)$ by combining with the length filtration on $\Lie_{\R}(M)$ for $M$ an $\F$-module, using the fact that when $\g=M$ is an $\F$-module,  $\mathrm{Gr}_l^0(\Lie_{\R}(M))=\tiLie(M)$. In particular, after passing to the associated graded, the evaluation map $\Lie_{\R}(\g)\rightarrow\g$ assembles to the $\tiLie_{\R}$-algebra structure map $ev:\A_{\R}\circ \tiLie(\tilde{\g})\rightarrow\tilde\g$. This is because for $x\in\g$, $[x,x]=\Q_0|x\in\Lie_{\R}(\g)$ is mapped to a nonzero element only if $x\in\mathrm{F}^0_l(\g)$, in which case $\Q_0|x\in F^1\Lie_{\R}(\g)$ and $\Q_0(x)\in\mathrm{F}^1_l(\g)$ while $[x,x]\in F^0 \Lie_{\R}(\g)$.

Iterating this process, we obtain a filtration $F^\bullet$ on $\Lie_{\R}\circ(\Lie_{\R})^{\circ n}(\g)$ for all $n>0$ by combining the filtration $F^\bullet$ on $(\Lie_{\R})^{\circ n}(\g)$ with the length filtration on $\Lie_{\R}$. This is the $n$th simplicial level of $\B(\id,\Lie_{\R},\g)$, with associated graded assembling to $(\tiLie_{\R})^{\circ n}(\tilde{\g})$. Explicitly,
$F^q\Big((\Lie_{\R})^{\circ n}(\g)\Big)$ is the collection of elements $\alpha|x$ in simplicial degree $n$ satisfying the following condition: if we rewrite $\alpha|x$ as an element in $(\tiLie_{\R})^{\circ n}(\g)$ via \Cref{levelwiseisom} and \Cref{rewritebracket}, so any $\Lie$ bracket in $\alpha|x$ is written as a linear combination of $\tiLie$ brackets and $\Q_0$ applies to other elements, then  the sum of the filtration degree of $x\in\g$ times the number of times $x$ appears and the number of  symbols $\Q^j$ in any term of $\alpha|x$ coming from applications of the monad $\tiLie_{\R}$ is at most $q$. 

Since $\R$ is a homogeneous algebra and evaluation of brackets do not increase the number of $\Q^j$'s in the expression, the structure map $\Lie_{\R}(\g)\rightarrow\g$ is compatible with this filtration, and so are the face maps and degeneracy maps in $\B(\id,\Lie_{\R},\g)$. The induced filtration $F^\bullet$ on the normalized complex of $\B(\id,\Lie_{\R},\g)$ gives rise to a May-type spectral sequence $$\bigoplus_{q}E^1_{q,s,t}=\bigoplus_{q}\pi_{s,t}\mathrm{Gr}^q\B(\id,\Lie_{\R},\g)\Rightarrow\pi_{s,t}\B(\id, \Lie_{\R},\g).$$ 
Note that the face maps $$(\tiLie_{\R})^{\circ n}(\tilde{\g})=\bigoplus_q\mathrm{Gr}^q\mathrm{Bar}_n(\id,\Lie_{\R},\g)\rightarrow(\tiLie_{\R})^{\circ (n-1)}(\tilde{\g})=\bigoplus_q\mathrm{Gr}^q\mathrm{Bar}_{n-1}(\id,\Lie_{\R},\g)$$ do not change the associated graded degree unless the differential creates self-brackets -- evaluating the $\R$-module structure or a $\tiLie$-bracket is either zero or does not change the number of $\Q$ symbols. Hence they assembles to the $\tiLie_{\R}$-algebra structure maps $(\tiLie_{\R})^{\circ j}(\tilde\g)\rightarrow(\tiLie_{\R})^{\circ (j-1)}(\tilde\g)$ except in the following situation: for $x\in\mathrm{Gr}^0_l(\g)$, the classes $\gamma_1(\Q^i|x):=[\Q^i|1|x,1|\Q^i|x]$ and $\Q_0|\Q^i|x$ are both in the second associated graded piece. Hence the total differential $\partial$ of the normalized complex of $\B(\id,\Lie_{\R},\g)$ sends $$\gamma_1(\Q^i|x):=[\Q^i|1|x,1|\Q^i|x]$$ to the element $$[\Q^i|x,\Q^i|x]+[\Q^i|x,1|\Q^i(x)]=\Q_0|\Q^i|x+[\Q^i|x,1|\Q^i(x)]$$ in $E^0_{2,*,*},$ i.e. the self-bracket has not been filtered away. Similarly, any class containing $\gamma_1(\Q^i|x)$ with $x\in\mathrm{Gr}^0_l(\g)$ has a face map whose target has at least one self-bracket term.   Whereas when $x\in\mathrm{F}^1_l(\g)$, the self-brackets in the target of such differentials are not visible in the associated graded because the number of $\Q^j$'s in the term decrease after we rewrite the self-brackets in terms of $\Q_0$. 

Hence we need to construct another spectral sequence to compute the $E^1$-page of the May-type spectral sequence. To further filter away the self-brackets in such differentials, we assign weight 1 to $\gamma_1(\Q^i|x)$ and $[\Q^i|x,1|\Q^i(x)]$ for all $i$ and $x\in\mathrm{Gr}^0_l(\g)$, weight 0 to everything else in $\g$, $\Lie_{\R}(\g)$, and $\Lie_{\R}\circ\Lie_{\R}(\g)$, including  $\Q_0|\Q^i|x$. Then we propagate the weight to  
$(\Lie_{\R})^{\circ n}(\g)$ for $n>2$ by stipulating that further applying operations $\Q$ does not change weight and brackets add weights. The associated graded of this weight filtration is precisely $\B(\id,\tiLie_{\R},\tilde\g)$, since the only face or degeneracy maps that are altered are the ones involving  $\gamma_1(\Q^i|x)$ for 
$x\in\mathrm{Gr}^0_l(\g)$, whose target no longer contains the self-bracket term $\Q_0|\Q^i|x$. Therefore we obtain an algebraic spectral sequence that converges to the $E^1$-page of the May-type spectral sequence. Then its $E^1$-page has dimensions precisely those of $\pi_{s,t}\B(\id,\tiLie_{\R},\tilde\g)$. Therefore we obtain an upper bound of the dimension of the $E^1$-page of the May-type spectral sequence $\bigoplus_{q}E^1_{q,s,t}$.
\end{proof}
We will call the spectral sequence above computing the $E^1$-page of the May-type spectral sequence the  $\gamma_1$-\textit{Bockstein spectral sequence}.
Since differentials preserve weights and the $\gamma_1$ operation on $\B(\id, \tiLie_{\R},L)$ appears in weight at least four, we immediately deduce the following from \Cref{upperbound}.
\begin{corollary}\label{smallweightisom}
    For any $\Lie_{\R}$-algebra $\g$, the homotopy groups of $\B(\id, \Lie_{\R},\g)$ and $\B(\id, \tiLie_{\R},\tilde\g)$ are isomorphic in weight less than four.
\end{corollary}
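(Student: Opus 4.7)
The plan is a direct application of the May spectral sequence from \Cref{upperbound}, combined with a weight count. That spectral sequence has $E^1$-page bounded above by $\pi_{s,t}\B(\id,\tiLie_{\R},\tilde\g)$ and converges to $\pi_{s,t}\B(\id,\Lie_{\R},\g)$. Inspection of the proof of \Cref{upperbound} shows that the only source of strict inequality between the $E^1$-page and its upper bound, and equally the only source of potentially nontrivial higher May differentials, is the family of ``phantom'' cycles of the form $[\Q^i|1|x,\, 1|\Q^i|x]$: these become cycles in the associated graded (and in $\B(\id,\tiLie_{\R},\tilde\g)$) precisely because the self-bracket has been killed off, while upstairs they acquire the boundary term $\Q_0|\Q^i|x$. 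Via \Cref{cokernel}, these discrepancies are generated by the $\gamma_1$-classes $\gamma_I\gamma_1\Q_J(x_k)$.

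Next I will observe that every such discrepancy class has weight at least four. Each $\Q^i$ carries weight two, so the minimum weight of a $\gamma_1$-class $\gamma_1\Q_j(x_k)$, equivalently the minimum weight of a self-bracket of two elements each of which already involves some $\Q^i$, is $2\cdot 2=4$. Since the bar differential, the $\Lie_{\R}$-action, and the May filtration are all weight-homogeneous (the $\R$-action, the Lie and $\tiLie$ brackets, and the rewriting in \Cref{rewritebracket} all add weights), the spectral sequence splits as a direct sum over weights and each differential $d^r$ preserves weight.

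Restricting attention to weights strictly less than four, no discrepancy class is present, so the bounding inclusion at the $E^1$-level is an equality, and every $d^r$ with $r\geq 1$ must vanish since its source or target would otherwise have to be a weight-$\geq 4$ discrepancy. The May spectral sequence therefore collapses in this range, yielding the claimed isomorphism $\pi_{s,t}\B(\id,\Lie_{\R},\g)\cong \pi_{s,t}\B(\id,\tiLie_{\R},\tilde\g)$ for weight less than four. The only real ``obstacle'' is the elementary weight bookkeeping, which follows at once from the explicit form of the generators recorded in \Cref{basisfortiLieAR} and \Cref{cokernel}.
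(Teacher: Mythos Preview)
Your proposal is correct and matches the paper's approach: both invoke the May spectral sequence of \Cref{upperbound} together with the observation that the discrepancy classes (those tied to $\gamma_1$, equivalently to self-brackets of $\Q$-decorated elements) first appear in weight four, so that in weight less than four the $E^1$-page coincides with $\pi_{*,*}\B(\id,\tiLie_{\R},\tilde\g)$ and the spectral sequence degenerates. One small remark: your citation of \Cref{cokernel} is slightly misplaced, since that lemma concerns the comparison with $\tiLie_{\rg}$ for the particular algebras of \Cref{quillenhomologytrivial} rather than with $\tiLie_{\R}$ for general $\g$; the weight bound you actually need is already contained in the proof of \Cref{upperbound}.
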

\begin{proof}
    In the  $\gamma_1$-Bockstein spectral sequence, the differentials do not appear until weight 4 since $\gamma_1(\Q_j|x)$ has weight 4.  By construction, differentials in the May-type spectral sequence occur when the source and target of a face map in $\B(\id,\Lie_{\R},\g)$ have different number of self-brackets. In weight two and three this cannot happen. Hence both spectral sequences collapse in weight less than four.
\end{proof}

\begin{remark}\label{rmk: modified may}
    In the case where the $\Lie_{\R}$-algebra $\g$ has vanishing Lie brackets, we can combine the two spectral sequences in \Cref{upperbound} into a single spectral sequence that converges to the $E^2$-page of the bar spectral sequence by simply shifting the filtration of any element in $\g$ up by 2. Since $\Q$ operations are additive and the Lie structure is trivial, the resulting filtered object $\g[2]$ is a module over the filtered monad $\hat{\pazocal{A}}_{\R}$, whose underlying object is still the $\R$-module $\g$. Then in the bar construction $\B(\id,\Lie_{\R},\g)$, the class $\Q_0|\alpha$ always lives in a lower filtration than $[\alpha,\alpha]$ for any class $\alpha$, since the filtration of $\alpha$ is at least 2. Hence the May-type spectral sequence has $E^1$-page given precisely by $\pi_{s,t}\B(\id, \tiLie_{\R},\tilde{\g})$.
\end{remark}

Since $\tiLie_{\R}=\pazocal{A}_{\R}\circ \tiLie$ is a composite monad, we apply  \Cref{constrn:composite simplicial} and \Cref{factortiLierg} to compute the homotopy groups of $\B(\id, \tiLie_{\R}, L)$ for $L$ a $\tiLie_{\R}$-algebra.
\begin{construction}\label{AR}
    For $L$ a $\tiLie_{\R}$-algebra with $\tiLie$-bracket $\langle -, -\rangle$, denote by $\AR(L)$ the bar construction $\B(\id,\A_{\R},L)$ equipped with a $\tiLie$-bracket $\langle -,-\rangle$ given levelwise by 
    $$\langle \alpha_1|\alpha_2|\ldots|\alpha_n|x, \beta_1|\beta_2|\ldots|\beta_n|y \rangle
= \left\{ \begin{array}{lcl}
 1|\cdots|1|\langle x,y\rangle & \mbox{if} &  \alpha_i=\beta_i=1, 1\leq i \leq n \\
0 &  &\mbox{otherwise} 
\end{array}\right.,$$
where $\alpha_i, \beta_j\in\R$ and $x,y\in L$. Here we use $L$ to mean the underlying $\R$-module $U^{\tiLie_{\R}}_{\Mod_{\R}}(L)$.
\end{construction}
\begin{corollary}\label{cor:factortiLierg}
For $L$ a $\tiLie_{\R}$-algebra with $\tiLie$-bracket $\langle -,-\rangle$, there is an isomorphism of bigraded homotopy groups
    $$\pi_{*,*}\B(\id,\tiLie_{\R},L )\cong \pi_{*,*}\B(\id, \tiLie,\AR(L)).$$  
\end{corollary}

\subsection{Homology groups of simplicial $\tiLie$-algebras.}
The homotopy groups of $\B(\id, \tiLie,V_\bullet)$ for $V_\bullet$ a simplicial $\tiLie$-algebra can be computed via a shifted version of the classical Chevalley-Eilenberg complex.

Recall from \cite{CE}, \cite[Section 5]{may} and \cite{priddy} that given a $\mathrm{Lie}^{\mathrm{ti}}$-algebra $L$, i.e., an unshifted totally isotropic ungraded Lie algebra over $\F$, its 
$\mathrm{Lie}^{\mathrm{ti}}$-algebra homology is computed by
$$H^{\mathrm{Lie}^{\mathrm{ti}}}_*(L):=H_*(\mathbb{L}Q^{\mathrm{Lie}^{\mathrm{ti}}_{\F}}_{\Mod_{\F}}(L)[1]\oplus \F)=H_*(CE(L)).$$ Here $CE(L)$ is the standard Chevalley-Eilenberg complex, defined to be the exterior algebra on $L[1]$ with differential 
$\delta$ given by
$$\delta(\sigma x_1\tens\cdots \tens \sigma x_n)=\sum_{1\leq i<j\leq n}[\sigma x_i,\sigma x_j]\tens \sigma x_1\tens\cdots\tens \widehat{\sigma x_i}\tens\cdots\tens \widehat{\sigma x_j}\tens\cdots \tens \sigma x_n.$$ There is no divided power part at $p=2$.  Since we are working with shifted, graded totally-isotropic Lie algebras, we use a modified version for ease of notation. First we note that given a $\tiLie$-algebra $L$, there are weak equivalences
\begin{equation}\label{shiftCE}
   N(\B(\mathrm{id}, \tiLie,L))\simeq N(\Sigma\B(\mathrm{id}, \mathrm{Lie}^{\mathrm{ti}}_{\F}, \Sigma^{-1} L))\simeq \Sigma \overline{CE}(\Sigma^{-1}L[1])[-1], 
\end{equation}
 where $\overline{CE}$ is the reduced complex. 

\begin{definition}\label{CEdef}
The \textit{Chevalley-Eilenberg complex} for a $\tiLie$-algebra $L$ is $\mathrm{CE}(L)=(\Lambda^\bullet(L),\delta)$, where $\Lambda^{\bullet}(L)$ is the free shifted graded exterior algebra on $L$ (placed in homological degree 0)  with a shifted graded exterior product $ \Sigma^{-1} \tens [1]$, which we continue to denote by $\tens$, that increases homological degree by one and decreases internal degree by one, reflecting the behavior of shifted graded Lie brackets. The differential 
$\delta$ is given by
$$\delta(x_1\tens\cdots \tens x_n)=\sum_{1\leq i<j\leq n}[x_i,x_j]\tens x_1\tens\cdots\tens \hat{x_i}\tens\cdots\tens \hat{x_j}\tens\cdots \tens x_n.$$
\end{definition}

Then the $\tiLie$-algebra homology of $L$ is given by the bigraded $\F$-module
\begin{align*}
  H^{\tiLie}_{*,*}(L):=\pi_{*,*}(\mathbb{L}Q^{\tiLie}_{\Mod_{\F}}(L)\oplus \F)\cong H_{*,*}(N(\B(\mathrm{id}, \tiLie,L)\oplus \F))\cong H_{*,*}(\mathrm{CE}(L)), 
\end{align*}
where the last isomorphism follows from rearranging the right hand side in  (\ref{shiftCE}).

\

In the case where $L$ is a simplicial $\tiLie$-algebra, its Chevalley-Eilenberg complex $\mathrm{CE}(L)$ is the simplicial chain complex obtained by applying the Chevalley-Eilenberg complex levelwise. Then Dold-Kan correspondence says that the homotopy groups of $\mathrm{CE}(L)$ are isomorphic to the homology groups of its total complex. A simplicial version of May's result is recorded in \cite[Section 3]{bhk}. Here we state the shifted version.

\begin{theorem}\cite[Theorem 3.13]{bhk}\label{CEbhk}
Let $L$ be a simplicial $\tiLie$-algebra. Then there is a natural isomorphism of bigraded $\F$-modules
$$H^{\tiLie}_{*,*}(L):=\pi_{*,*}(\mathbb{L}Q^{\tiLie}_{\Mod_{\F}}(L)\oplus \F)\cong H_{*,*}(\mathrm{CE}(L)).$$
\end{theorem}

In the total complex of $\mathrm{CE}(L)$, the differential in the homological direction is given by $\delta$ in \Cref{CEdef}. The differential $d$ in the simplicial direction is obtained by applying the shifted graded exterior algebra functor $\Lambda^{\bullet}$ to each simplicial differential $d_i$ of $L$ and taking the alternating sum, i.e. $$d=d_0\tens d_0\tens \cdots \tens d_0+\cdots +d_r\tens d_r\tens \cdots \tens d_r.$$ Both differentials preserve weights.

\

If the $\tiLie$-bracket on a simplicial $\tiLie$-algebra $L$ is trivial, then the differential $\delta$ in the homological direction vanishes and $H_{*,*}(\mathrm{CE}(L))\cong \pi_{*,*}(\Lambda^\bullet(L))$. The natural operations on the homotopy groups of  simplicial exterior algebras are well-understood by the work of Cartan, Bousfield, and Dwyer. We only state their results in the case of free algebras, and modify the grading to take into account the fact that we work with shifted, graded exterior algebras.
\begin{theorem}\cite[Theorem 2.1, Remark 4.4]{dwyer}\cite{bousfield}\cite{cartan}\cite[Theorem 3.9]{hm}\label{gamma}
Let $V_\bullet$ be a simplicial graded  $\F$-module. There are natural operations $$\gamma_i:\pi_{h,r,t}(\Lambda^h(V_\bullet))\rightarrow \pi_{2h+1,r+i,2t-1}(\Lambda^{2h+1}(V_\bullet)), 1\leq i \leq r$$ for all $r\geq 1$, satisfying the relations

$$\gamma_i\gamma_j(x)=\sum_{(i+1)/2\leq l\leq (i+j)/3}\binom{j-i+l-1}{j-l}\gamma_{i+j-l}\gamma_l(x) \mathrm{\ \ \ for\ all\  } i<2j.$$
\end{theorem}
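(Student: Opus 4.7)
The plan is to construct the operations universally via classifying maps and to deduce the Adem-type relations from a calculation in a doubly-universal example, following the strategy of the cited works \cite{bousfield,cartan,dwyer,hm}.

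First I would reduce to a universal case. By the Dold--Kan correspondence, a class $x\in\pi_{h,r,t}(\Lambda^h(V_\bullet))$ is classified by a map of simplicial weighted graded $\F$-modules out of an Eilenberg--MacLane object $K(\F,r,t)$---the simplicial graded $\F$-module whose normalized chain complex is concentrated in bidegree $(r,t)$---into $\Lambda^h(V_\bullet)$. Via the free/forgetful adjunction for $\Lambda^\bullet$, it suffices to define a universal class $\gamma_i(\iota)\in\pi_{2h+1,r+i,2t-1}(\Lambda^{2h+1}(K(\F,r,t)))$, after which $\gamma_i(x)$ is obtained by functoriality. Naturality in $V_\bullet$ is then built into the construction.

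Second I would compute $\pi_{\ast,\ast,\ast}(\Lambda^\bullet(K(\F,r,t)))$ by Cartan's inductive method. Resolve $K(\F,r,t)$ via the short exact sequence $K(\F,r-1,t)\hookrightarrow W\twoheadrightarrow K(\F,r,t)$ with $W$ contractible, apply $\Lambda^\bullet$, and analyze the resulting spectral sequence. Each nonzero transgression sends a class in $\pi_{h,\ast,t}(\Lambda^h(K(\F,r-1,t)))$ to a class in $\pi_{2h+1,\ast+1,2t-1}(\Lambda^{2h+1}(K(\F,r,t)))$, producing precisely one new exterior generator for each admissible shift $i$. Inducting on $r$ yields the generators $\gamma_1(\iota),\ldots,\gamma_r(\iota)$ with the tridegree shifts stated in the theorem; the $2t-1$ (rather than $2t$) comes from the Koszul-sign-adjusted exterior product $\tens=\Sigma^{-1}\tens[1]$ fixed in \Cref{CEdef}.

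Third I would verify the quadratic relations by a calculation in $\Lambda^{4h+3}(K(\F,r,t))$. Both $\gamma_i\gamma_j(\iota)$ and $\sum_l \binom{j-i+l-1}{j-l}\gamma_{i+j-l}\gamma_l(\iota)$ live in the same tridegree $\pi_{4h+3,\,r+i+j,\,4t-3}$, so once each side is expanded in the explicit generator basis produced in the second step the identity reduces to a manipulation of binomial coefficients parallel to the classical Adem relation. The main obstacle is the bookkeeping in the second step: tracking transgressions through the Postnikov-style tower, verifying that each $\gamma_i(\iota)$ is nonzero, and confirming that no extra classes survive outside the range $1\leq i\leq r$. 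The present theorem imports this delicate calculation from \cite{bousfield,cartan,dwyer,hm} rather than redoing it.
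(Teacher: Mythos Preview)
The paper does not provide a proof of this theorem: it is stated as a citation to \cite{dwyer,bousfield,cartan,hm} and used as a black box. Your proposal correctly identifies this and sketches the standard approach from those references (universal classes in Eilenberg--MacLane objects, Cartan's inductive computation, and the Adem-type relation check), which is appropriate; there is no competing proof in the paper to compare against.
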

Here in the trigrading $(h,r,t)$ records the number of exterior products $h$, the simplicial degree $r$ in $V_\bullet$, and the internal degree $t$.

Furthermore, they computed the homotopy groups of the free exterior algebra on a simplicial $\F$-module.
\begin{definition}
A sequence $I=(i_1,\ldots, i_m)$ is $\gamma$-\textit{admissible} if $i_l\geq 2 i_{l+1}$ for  $1\leq l\leq m-1$.  The \textit{excess} of $I$ is $e(I)=i_1-i_2-\cdots -i_m$.
\end{definition}

\begin{theorem}\cite[Theorem 8.6]{bousfield}\cite[Theorem 3.19]{hm}\label{exteriorhomotopy}
Let $A$ be a graded $\F$-basis for $\pi_*(V_\bullet)$. Then $\pi_{*,*}(\Lambda^{\bullet}(V_\bullet))$ is the (shifted graded) exterior algebra on generators
$\gamma_I(\alpha)$, where $\alpha\in A$ and $I=(i_1,\ldots, i_m)$ is $\gamma$-admissible with $e(I)\leq s(\alpha)$, where $s(\alpha)$ is the simplicial degree of the basis element $\alpha$.
\end{theorem}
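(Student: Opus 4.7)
The plan is to reduce the theorem to the single-generator case and then invoke Cartan–Dold–Puppe style calculations for the derived functors of the exterior algebra functor $\Lambda^{\bullet}$, treating the operations $\gamma_i$ from \Cref{gamma} as known.

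First I would reduce to simplicial $\F$-modules with a single non-degenerate generator. Since $\Lambda^{\bullet}$ commutes with filtered colimits and converts direct sums to shifted graded tensor products, $\pi_{*,*}\Lambda^{\bullet}(V_\bullet)$ is the shifted graded exterior product (over a chosen homotopy basis $A$) of the groups $\pi_{*,*}\Lambda^{\bullet}(F_\alpha)$, where $F_\alpha$ is a minimal simplicial model with $\pi_{*}(F_\alpha)$ concentrated in a single bidegree $(s(\alpha),t(\alpha))$ and equal to $\F$ there. This matches the exterior algebra structure asserted in the statement, so it is enough to show that for such a minimal $F_\alpha$ the group $\pi_{*,*}\Lambda^{\bullet}(F_\alpha)$ has an $\F$-basis consisting of the classes $\gamma_I(\alpha)$ with $I$ $\gamma$-admissible and $e(I)\leq s(\alpha)$ (together with $1$ and $\alpha$ itself, corresponding to $I=\emptyset$).

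For this reduced statement I would use Dold–Puppe: the functor $L_*\Lambda^{\bullet}$ applied to a Moore $\F$-module in simplicial degree $s$ is the derived functor of the exterior algebra, and after the shift of bigrading in \Cref{CEdef} this agrees with the classical calculation over $\F=\F_2$ due to Cartan and made explicit by Bousfield and Dwyer. Concretely, taking $F_\alpha$ to be a free simplicial $\F$-module on a single simplex in degree $s$, the bar/Eilenberg–MacLane construction identifies $\Lambda^{\bullet}(F_\alpha)$ with a model whose homotopy is the mod $2$ cohomology of $K(\F_2,s)$, suitably regraded to account for the shifted exterior product. Cartan's theorem then provides an $\F$-basis by admissible iterated operations, which matches $\gamma_I(\alpha)$ under the correspondence $\gamma_i \leftrightarrow$ the unary operation raising simplicial degree by $i$ and doubling the exterior-degree (plus the shift) established in \Cref{gamma}. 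The admissibility $i_l\geq 2 i_{l+1}$ and the bound $e(I)\leq s(\alpha)$ are exactly the classical admissibility and excess conditions transported across this identification.

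The main obstacle is ensuring that the indexing on $\gamma_i$ in the shifted graded setting translates correctly to the classical Cartan indexing, so that the admissibility relations of \Cref{gamma} propagate to the reduced form stated here. To handle this I would pin down a normal form: any iterated $\gamma$-monomial can be rewritten as a sum of admissibles using the relations in \Cref{gamma}, giving a spanning set; linear independence on $F_\alpha$ follows by comparing Poincar\'e series against the known computation of $H_{*}(K(\F_2,s);\F)$ after the degree reindexing. The excess condition $e(I)\le s(\alpha)$ drops out because any operation of excess exceeding $s(\alpha)$ would have to be $\gamma_i(\alpha)$ with $i>s(\alpha)$, which lies outside the range of definition in \Cref{gamma}. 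Combining the spanning and independence statements then gives the asserted exterior algebra structure.
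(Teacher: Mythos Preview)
The paper does not give its own proof of this theorem: it is stated with citations to Bousfield's unpublished manuscript and to Haugseng--Miller and is used as a black box. So there is no in-paper argument to compare against. That said, your sketch follows the standard line in those references (reduce to a single generator via a K\"unneth decomposition, then compute the derived functors of $\Lambda^\bullet$ on a Moore object via Cartan/Dold--Puppe), and this is indeed how the result is established in the cited sources.

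One point of your sketch is imprecise and would need repair if you fleshed it out. You write that $\Lambda^\bullet(F_\alpha)$ has homotopy equal to the mod~$2$ cohomology of $K(\F_2,s)$ after regrading. That identification is for the \emph{symmetric} (polynomial) functor, not the exterior one: Cartan's s\'eminaire computes the derived functors of both $\mathrm{Sym}^n$ and $\Lambda^n$ separately, and the operation algebras differ (the $\gamma_i$ here are divided-square type operations specific to the exterior case, not Steenrod squares). Over $\F_2$ in the shifted graded setting there is a relationship via degree shifting, but invoking $H^*(K(\F_2,s))$ directly conflates the two computations. The correct comparison is to Bousfield's table of derived functors of $\Lambda^n$ (or the equivalent statement in Dwyer), from which the admissibility and excess conditions for $\gamma_I$ are read off directly; your Poincar\'e-series cross-check should be against those derived functors rather than against Eilenberg--MacLane cohomology.
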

The following is immediate by combining \Cref{CEbhk} and \Cref{exteriorhomotopy}.
\begin{corollary}\label{homotopywhentrivialbracket}
   Suppose that $L$ is a $\tiLie_{\R}$-algebra with trivial Lie brackets. Then $\pi_{*,*}\B(\mathrm{id}, \tiLie, \AR(L))$ is isomorphic as a bigraded $\F$-vector space to the (shifted graded) exterior algebra over $\F$ on generators $\gamma_I(\alpha)$, where $\alpha$ is a basis element of $\pi_{r,*}( \AR(L))$ (cf. \Cref{AR}) and $I$ is $\gamma$-admissible with $e(I)\leq r$.
\end{corollary}

Now we can compute the homotopy groups of $\B(\id,\tiLie_{\R},L)$ when the $\tiLie$ structure on $L$ is trivial.
First we recall the following result of Priddy that computes the Ext and Tor groups of a homogeneous Koszul algebra, which we make use of to compute the Tor groups over $\R$.
\begin{theorem}\cite[Theorem 2.5]{priddy}
Let $R$ be a homogeneous Koszul algebra over $\F$ on generators $a_i, i\in J$ in weight 1 and quadratic relations $r_j$. Let $B$ be a subset of the set $S$ of nonempty sequences on $J$ such that there is a basis of $R$ consisting of monomials $\{a_I\}_{I\in S}$. Then the cohomology algebra $H^{*,*}(A)=\mathrm{Ext}^{*,*}_R(\F,\F)$ is isomorphic to the tensor algebra on $a_i^\vee$ subject to relations that are linear dual to the $r_j$'s.
\end{theorem}
We record an explicit description of the procedure of cycle completion that produces a given class in the Tor groups, which will be useful later.
\begin{remark}\label{cyclecompletion}
Call $a_{i}a_j$ allowable if $(i,j)\in B$ and unallowable otherwise. Since we are working over $\F$ and the cohomology of $\R$ as a bigraded $\F$-module is finite in each  bidegree, we are allowed identify the bigraded $\F$-modules $\mathrm{Tor}^R_{m,n}(\F,\F)$ with the $\F$-linear dual of $\mathrm{Ext}^{m,n}_R(\F,\F)$. To simplify notation, we will name classes in $\mathrm{Tor}^R_{m,n}(\F,\F)$ by its the name of its dual in $\mathrm{Ext}^{m,n}_R(\F,\F)$.
 A cycle corresponding to the class $$a_{i_1}^\vee a_{i_2}^\vee\cdots a_{i_m}^\vee\in  \mathrm{Tor}^R_{m,*}(\F,\F)$$ with $(i_k, i_{k+1})$ unallowable for all $k$ in the reduced bar complex over $R$ is a sum $\sum_j [a_{j_1}|a_{j_2}|\cdots|a_{j_m}]\in  R^{\tens m}$ that contains the term $[a_{i_1}|a_{i_2}|\cdots|a_{i_m}]$ with nonzero coefficient. We call this the cycle completion of the monomial $[a_{i_1}|a_{i_2}|\cdots|a_{i_m}]$. To find the cycle explicitly, we start with $\alpha_0=[a_{i_1}|a_{i_2}|\cdots|a_{i_m}]$. The differential $\partial$ is a sum of face maps composing adjacent terms $a_{i_k}a_{i_{k+1}}$. We use the relation $a_{i_k}a_{i_{k+1}}=\sum b_{j_k}b_{j_{k+1}}$ to cancel out the terms $[a_{i_1}|\cdots|a_{i_k}a_{i_{k+1}}|\cdots|a_{i_m}]$ in the differential by adding $\sum[a_{i_1}|\cdots|a_{j_{k-1}}|b_{j_k}|b_{j_{k+1}}|a_{j_{k+2}}|\cdots|a_{i_m}]$ to $\alpha_0$ for all $k$ and denote the resulting sum $\alpha_1$. Then we pair off the differential for every term in $\alpha_1-\alpha_0$, i.e. for each nonzero term in $\partial (\alpha_1-\alpha_0)$ obtained by composing an unallowable 2-tuple via the $k$th face map, we use the relations in $R$ to find a sum in $R^{\tens m}$ whose image under the $k$th face map cancel out that term. Thus we obtain a new sum $\alpha_2$ such that all terms in the differential on $\alpha_1$ are paired off. Now we repeat the process again. It has to terminate since the number of unallowable adjacent pairs is nonincreasing for any term at each step and there are finitely many monomials with a given number of unallowable adjacent pairs. In other words,  $a_{i_1}a_{i_2}\cdots a_{i_m}$ can be written as a unique sum of basis monomials through this iterative process in finite steps. 
\end{remark}

 %\begin{proof}
 %It remains to show the second claim. We call a monomial $a_I$ with $I\in S-B$ inadmissible. The cycle completion of  $a_{i_1}|a_{i_2}|\cdots |a_{i_m}$ with $I\in S-B$ is obtained recursively from the relations $r_j$ as follows.  Start from the left end of the monomial and replace $a_{i_1}|a_{i_2}$ with the sum of basis monomials $b^j_{1}|b^j_{2}$ specified by the quadratic relations, so that the face map $d_1$ on the sum $\beta_1=a_{i_1}|a_{i_2}|\cdots |a_{i_m}+\sum b^j_{1}|b^j_{2}|a_{i_3}|\cdots|a_{i_m}$ is zero. The inadmissable monomials $b^j_2a_{i_3}$ can be written as a sum of basis monimials $c^k_2c^k_{3}$, in each $b^j_{1}b^j_{2}a_{i_3}\cdots a_{i_m}$ using the quadratic relations. 
 %\end{proof}

\begin{lemma}\label{basisfortiLieAR}
(1).  Suppose that $L=\Sigma^{k}\F$ is a trivial $\tiLie_{\R}$-algebra.
    Then $\pi_{*,*}\B(\mathrm{id}, \tiLie, \AR(L))$ is the  exterior algebra over $\F$ on generators $\gamma_I\Q^J(x_k)$, where $x_k$ is the generator of $\pi_*(L)$, $J=(j_1,\ldots,j_r)$ satisfies $$j_{l+1}+\cdots+j_r+k-(r-l)\leq j_l\leq 2j_{l+1}$$ for $1\leq l<r$ and $j_r> k$,  and $I$ is $\gamma$-admissible with $e(I)\leq r$. In lower indexing, the generators are $\gamma_I\Q_J(x_k)$, where $J=(j_1,\ldots,j_r)$ satisfies $0\leq j_l\leq j_{l+1}+1$ for all $l$,  and $I$ is $\gamma$-admissible with $e(I)\leq r$.

(2). Let $L$ be the $\tiLie_{\R}$-algebra with  underlying $\R$-module $\Omega^n \free^{\Mod_{\R}}_{\Mod_{\F}}(\Sigma^{n+k}\F), n\geq 1$ and trivial Lie brackets. Then $\pi_{*,*}\B(\mathrm{id}, \tiLie, \AR(L))$ is the exterior algebra over $\F$ on generators $\gamma_I\Q_J(x_k)$, where $J=(j_1,\ldots,j_r)$ satisfies $0\leq j_l\leq j_{l+1}+1$ for all $l<r$ and $0\leq j_r<n$,  and $I$ is $\gamma$-admissible with $e(I)\leq r$.

\end{lemma}
\begin{proof}
(1). In light of \Cref{homotopywhentrivialbracket},
it suffices to compute $$\pi_{*,*}(\AR(L))=\pi_{*,*}\B(\id,\A_{\R},\Sigma^{k}\F),$$ where the right hand side is the unstable Tor groups $\mathrm{UnTor}^{\R}_{*,*}(\F,\Sigma^{k}\F)$. The unstable Tor groups are  computed by taking the homotopy groups of the subcomplex of the bar complex computing the Tor groups $\mathrm{Tor}^{\R}_{*,*}(\F,\Sigma^{k}\F)$ obtained by regarding $\Sigma^{k}\F$ as an unstable trivial module over $\R$ and imposing the instability conditions $[\Q^j|\alpha]=0$ for $j\leq |\alpha|$, cf. \cite[\S 3]{unstable}. 

The quadratic algebra $\R$ is a homogeneous Koszul algebra, since the canonical basis $\{\Q^{j_1}\cdots\Q^{j_r}, j_i>2j_{i+1} \forall i\}$  of  $\R$  is a  Poincar\'{e}-Birkhoff-Witt basis in the sense of Priddy \cite[Theorem 5.3]{priddy}.  In particular, it follows from Priddy's machinery \cite[Theorem 2.5, 3.8]{priddy} that the Tor group $\mathrm{Tor}^{\R}_{s,*}(\F,\F)$ has a basis consisting of cycles indexed by $\Q^{j_1}\cdots\Q^{j_s}$, where   $j_i\leq 2j_{i+1}$ for all $i$. 

To compute the unstable Tor groups on a class $x_k$ of internal degree $k$, we need to
impose the instability condition $\Q^j(x)=0$ for $j<|x|$,
then the basis of  $\mathrm{UnTor}^{\R}_{r,*}(\F,\F\{x_k\})$ consists of basis elements of $\mathrm{Tor}^{\R}_{r,*}(\F,\F)$ satisfying  $j_i> j_{i-1}-1+j_{i-2}-1+\cdots +j_r-1+|x|$ for all $i<r$ and $j_r\geq k$, or equivalently sequences $\Q_{j_1}\cdots\Q_{j_s}(x_k)$, where   $0\leq j_i\leq j_{i+1}+1$ for all $i$. 

(2). 
Iterating \Cref{stablization} yields a canonical map of $\R$-modules $$ L=\Omega^n \free^{\Mod_{\R}}_{\Mod_{\F}}(\Sigma^{n+k}\F)\rightarrow \Omega^\infty\free^{\Mod_{\R}}_{\Mod_{\F}}(\Sigma^\infty\Sigma^k\F)\cong\Sigma^k\F,$$ which gives rise to a surjective map of $\tiLie_{\R}$-algebras with trivial brackets. The underlying $\F$-module of  $L$ has basis $\Q^J x_{k}$, where $J=(j_1,\ldots,j_r)$ is a basis element of $\R$ satisfying $j_r\geq n+k$.   %Note that $\pi_{*,*}(\AR^{>0}(\free^{\Mod_{\rg}}_{\Mod_{\F}}(\F\{x_k\})))=\F\{x_k\}.$
Suppose that $\alpha\in\AR(L)$ is the cycle completion of an element $\Q^{j_1}|\cdots |\Q^{j_r}|x_k$ with $k\leq j_r<n+k$ and $j_{l+1}-1+\cdots +j_r-1+k\leq j_l\leq 2j_{l+1}$ for $l<r$.  Since cycle completion via Behrens' relations in the sense of \Cref{cyclecompletion} cannot increase the index of the right most operation, the differentials supported by $\alpha$ are the same as those supported by its image in $\AR(\F\{x_k\})$, so $\alpha$ is a nontrivial cycle. Otherwise, all but the rightmost face maps send $\alpha$ to zero, while the rightmost face map from at least one (distinct) term of $\alpha$ is nonzero, so it is impossible to complete the cycle. Switching to lower-indexing yields the desired answer.
\end{proof}

Combining \Cref{exteriorhomotopy} and \Cref{basisfortiLieAR}, we have the following:
\begin{corollary}\label{MayE1page}
For $\g=\Omega^n \free^{\Mod_{\R}}_{\Mod_{\F}}(\Sigma^{n+k}\F)$ with $1\leq n\leq \infty$, the $E^1$-page $\pi_{*,*}\B(\id,\tiLie_{\R},\g)$ of the algebraic $\gamma_1$-Bockstein spectral sequence (cf. \Cref{upperbound}) is the (shifted graded) exterior algebra on generators $\gamma_I\Q_J(x_k)$, where $I=(i_1,\ldots, i_m)$ is $\gamma$-admissible with $e(I)\leq r$ and $J=(j_1,\ldots,j_r)$ satisfies 
  $0\leq j_l\leq j_{l+1}+1$ for $l<r$, $0\leq j_r<n$.
\end{corollary}

\subsection{Quillen homology of  $\Lie_{\R}$-algebras with trivial brackets}
Next we want to identify the differentials in the May-type spectral sequence and the $\gamma_1$-Bockstein spectral sequence when $\g=\Omega^n \free^{\Mod_{\R}}_{\Mod_{\F}}(\Sigma^{n+k}\F)$. There is no canonical map from $\pi_{*,*}\B(\id,\Lie_{\R},\g)$ to the $E^1$-page $\pi_{*,*}\B(\id,\tiLie_{\R},\g)$ of the $\gamma_1$-Bockstein spectral sequence; instead we map  $\B(\id,\Lie_{\R},\g)$ into the bar construction of another variant of $\Lie_{\R}$-algebras.

\begin{definition}\label{DefinetiLieR>0}

Let $\Mod _{\rg}\subset\Mod_{\R}$ be the subcategory of allowable $\R$-modules $M$ such that $\Q_0(x)=0$ for all $x\in M$. Denote by $\free^{\Mod_{\rg}}_{\Mod_{\F}}$ the free $\rg$-module functor, and $\A_{\rg}$ the additive monad associated to the free functor.  Let $\tiLie_{\rg}= \A_{\rg}\circ \tiLie$, where the composite monad on the right  has distributivity given by $[\Q^j(-),(-)]=0$.
\end{definition}
 By \Cref{freeLR}, there is an equivalence of $\Lie_{\R}$-algebras $$\Lie_{\R}(M)=\A_{\R}\circ\Lie(M)/(\Q_0(x)=[x,x], x\in M).$$ In comparison, there is an equivalence of $\tiLie_{\rg}$-algebras $$\tiLie_{\rg}(M)= \A_{\rg}\circ \tiLie(M)=\A_{\R}\circ\Lie(M)/\langle\Q_0(x),[x,x], x\in M\rangle,$$ where the quotient is taken with respect to the left $\R$-algebra ideal. Hence the category $\tiLie_{\rg}$ of $\tiLie_{\rg}$-algebras is the subcategory of $\Lie_{\R}$-algebras $L$ satisfying the condition that $\Q_0(x)=[x,x]=0$ for all $x \in L$.
The inclusion $T^{\Lie_{\R}}_{\tiLie_{\rg}}(\g): \tiLie_{\rg}\rightarrow\Lie_{\R}$ of subcategory admits a left adjoint $Q^{\Lie_{\R}}_{\tiLie_{\rg}}(\g)$ that takes the quotient by the $\R$-algebra ideal of the self-brackets. 
 When $\g$ is a $\Lie_{\R}$-algebra with trivial $\Lie$ brackets, $Q^{\Lie_{\R}}_{\tiLie_{\rg}}(\g)$ is given by equipping the $\rg$-module $Q^{\Mod_{\R}}_{\Mod_{\rg}}(\g)$ with trivial $\tiLie$ brackets.
\begin{lemma}\label{comparison}
 Let $\g$ be an $\Lie_{\R}$-algebra. 
    There is a surjective map of simplicial $\F$-modules $$\varphi:\B(\id,\Lie_{\R},\g)\rightarrow\B(\id,\tiLie_{\rg},
    Q^{\Lie_{\R}}_{\tiLie_{\rg}}(\g)).$$
\end{lemma}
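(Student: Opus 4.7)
The plan is to construct $\varphi$ levelwise from two pieces of data and then verify the simplicial identities. Write $T:=T^{\Lie_{\R}}_{\tiLie_{\rg}}$ and $P:=Q^{\Lie_{\R}}_{\tiLie_{\rg}}$, so that the unit $\eta_{\g}:\g\twoheadrightarrow TP\g$ of the adjunction $P\dashv T$ is a surjective quotient of $\Lie_{\R}$-algebras.

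The first ingredient is a monad morphism $q:\Lie_{\R}\to\tiLie_{\rg}$ on $\Mod_{\F}$. Because $\tiLie_{\rg}\hookrightarrow\Lie_{\R}$ is the full subcategory cut out by the extra relations $\Q_0(x)=[x,x]=0$, the free algebra $\tiLie_{\rg}(V)$ on $V\in\Mod_{\F}$ is canonically a quotient of $\Lie_{\R}(V)$. The resulting natural surjection $q_V:\Lie_{\R}(V)\twoheadrightarrow\tiLie_{\rg}(V)$ is compatible with monad units and multiplications by the universal property of free algebras, and is therefore a morphism of monads. Horizontal iteration then yields surjections $q^{(n)}_V:\Lie_{\R}^n(V)\twoheadrightarrow\tiLie_{\rg}^n(V)$ for every $n$. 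I then set
\[
\varphi_n\,:\,\Lie_{\R}^n(\g)\xrightarrow{\Lie_{\R}^n(\eta_{\g})}\Lie_{\R}^n(TP\g)\xrightarrow{\,q^{(n)}_{P\g}\,}\tiLie_{\rg}^n(P\g),
\]
which is a composite of surjections of underlying $\F$-modules.

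To see that $\{\varphi_n\}$ assembles into a simplicial map, there are three types of face map to check. The interior face maps reduce to the identity $\mu^{\tiLie_{\rg}}\circ q^{(2)}=q\circ\mu^{\Lie_{\R}}$, which is part of $q$ being a monad morphism. The leftmost face map $d_0$ (augmentation $\Lie_{\R}\to\id$ and $\tiLie_{\rg}\to\id$) is intertwined because both augmentations are projection onto weight one, and $q$ preserves the weight grading. The rightmost face map (algebra action) is intertwined because $\eta_{\g}$ is a morphism of $\Lie_{\R}$-algebras, and the $\Lie_{\R}$-structure on $TP\g$ is by definition the pullback along $q$ of the $\tiLie_{\rg}$-structure on $P\g$. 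Degeneracies are compatible via unit compatibility of $q$. The main obstacle, such as it is, is the bookkeeping needed to confirm that $q$ is a bona fide monad morphism and that $T$ is realized as pullback along $q$; both are formal consequences of $\tiLie_{\rg}$ being a subcategory cut out by explicit monadic relations, so no genuine difficulty arises.
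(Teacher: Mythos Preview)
Your proof is correct and follows essentially the same route as the paper: both construct the surjective monad morphism $q:\Lie_{\R}\to\tiLie_{\rg}$ (the paper phrases it as ``send $\Q_0$ to $0$'' after identifying $\tiLie_{\rg}\simeq\A_{\rg}\circ\tiLie$, you phrase it via the universal property of the quotient) and then let it, together with the unit $\eta_{\g}$, induce the map of bar constructions. Your version is simply more explicit about the face--map verifications, which the paper leaves implicit in the phrase ``this induces the map of bar constructions in question.''
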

\begin{proof}
There is a map of monads $\Lie_{\R}\rightarrow\tiLie_{\rg}$ that sends the symbol $\Q_0$ to 0, and this induces the map of bar constructions in question.    
\end{proof}

The homotopy group of $\B(\id, \tiLie_{\rg},L)$ is computed in the same way as for $\B(\id,\tiLie_{\R},L)$ via  \Cref{factortiLierg}  and \Cref{basisfortiLieAR} ($\Q_0$ operation no longer appears in the generators). 
\begin{construction}\label{AR>0}
    For $L$ a $\tiLie_{\rg}$-algebra with $\tiLie$-bracket $\langle-,-\rangle$, denote by $\AR^{>0}(L)$ the bar construction $\B(\id,\A_{\rg},L)$ equipped with the simplicial $\tiLie$-algebra structure given levelwise by 
    $$\langle \alpha_1|\alpha_2|\ldots|\alpha_n|x, \beta_1|\beta_2|\ldots|\beta_n|y \rangle
= \left\{ \begin{array}{lcl}
 1|\cdots|1|\langle x,y\rangle & \mbox{if} &  \alpha_i=\beta_i=1, 1\leq i \leq n \\
0 &  &\mbox{otherwise} 
\end{array}\right..$$
\end{construction}
\begin{lemma}\label{homotopytiLieR}
(1).   There is an isomorphism
\begin{align*}
    \pi_{*,*}\B(\id, \tiLie_{\rg},\Sigma^k\F)&\cong\pi_{*,*}\B(\id, \tiLie,\B(\id, \A_{\rg},\Sigma^k\F))\\&\cong \pi_{*,*}\Lambda^\bullet(\mathrm{UnTor}^{\rg}_{*,*}(\F, \F\{x_k\})).
\end{align*}

Hence  $\pi_{*,*}\B(\id, \tiLie_{\rg},\Sigma^k\F)$ is the exterior algebra on generators $\gamma_I\Q_J(x_k)$, where $J=(j_1,\ldots,j_r)$ satisfies $1\leq j_l\leq j_l +1$ for all $l$ and $I$ is $\gamma$-admissible with $e(I)\leq r$.

(2). The homotopy group of $\B(\id,\tiLie_{\rg},\Omega^n\free^{\Mod_{\rg}}_{\Mod_{\F}}(\Sigma^{n+k}\F))$ is the exterior algebra on generators $\gamma_I\Q_J(x_k)$, where $J=(j_1,\ldots,j_r)$ satisfies $1\leq j_r<n$ and $1\leq j_l\leq j_l +1$ for $l<r$, while $I$ is $\gamma$-admissible with $e(I)\leq r$.

(3). For $L=\Omega^n\free^{\Mod_{\rg}}_{\Mod_{\F}}(\Sigma^{n+k}\F)$ with $1\leq n\leq\infty$, the quotient map of monads $\A_{\R}\rightarrow\A_{\rg}$ induces a surjective map $\pi_{*,*}\B(\id, \tiLie_{\R},L)\rightarrow\pi_{*,*}\B(\id, \tiLie_{\rg},L)$ that sends the symbol $\Q_0$ to 0.
\end{lemma}

In order to use the comparison map (cf. \Cref{comparison}) $$\varphi_*:\pi_{*,*}\B(\id,\Lie_{\R},\g)\rightarrow\pi_{*,*}\B(\id,\tiLie_{\rg},
    \g)$$ to detect differentials and permanent cycles, we make use of  explicit combinatorial formulae of   $\gamma_i$ by B\"{o}kstedt and Ottosen. The grading conventions are modified to suit our context.

For $r, i\in\mathbb{N}$ with $1 \leq i \leq r$, let $U(r,i)$  be the set of
pairs $(A, B)$ of ordered sequences $a_1 < \cdots < a_i, b_1 < \cdots < b_i$ such that 
$\{a_1,\ldots,a_i\}$ and $\{b_1,\ldots,b_i\}$ are complementary subsets of $\{r-i, r-i + 1,\ldots,r + i - 1\}$.
Let $V (r,i) \subset U(r,i)$ be the subset with $a_1 = r - i$.
\begin{proposition}\cite[Theorem 1.3, Lemma 3.1]{bokstedt} \label{bokstedt}
Suppose that $V_\bullet$ is a simplicial $\F$-module with face maps $d_j$.
Let $z$ be a representative of a class $[z]\in\pi_{s,t}(V_\bullet)$  in the normalized complex $N(V_\bullet)$. For $2\leq i \leq s$, define
$$\gamma_i(z) =\sum_{(A,B)\in V(s,i)}s_{a_i}\cdots s_{a_2}s_{a_1}(z)\tens s_{b_i}\cdots s_{b_2} s_{b_1}(z)\in \Lambda^2(V_\bullet).$$ Then  $d_j(\gamma_i(z))=0$ for $0\leq j\leq i+s$, and the induced operation $\gamma_i:\pi_{s,t}(V_\bullet)\rightarrow\pi_{s+i+1,2t-1}(\Lambda^2 (V_\bullet))$ are exactly the Dwyer-Bousfield operations in \Cref{exteriorhomotopy}. 
\end{proposition}

\begin{remark}\label{delta1}
    If in addition $V_\bullet$ is exterior, then the formula above for $i=1$ induces the operation $\gamma_1$ on $\pi_{*,*}(V_\bullet)$. The operation $\gamma_1$ is not well-defined when there is some element $a$ in the simplicial commutative algebra $V_\bullet$ such that $a\tens a \neq 0$. This is because in $N(V_\bullet)$ the differential sends $\gamma_1(a)$ to $a\tens a$, cf. \cite[Remark 4.3, 4.4]{dwyer}\cite[Remark 3.2]{bokstedt}.

Hence we obtain natural operations $\gamma_i$ for $1\leq i\leq s$ on $$\pi_{s,*}(\B(\id,\tiLie,\AR^{>0}(\Sigma^k\F)))\cong \pi_{s,*}(\Lambda^\bullet(\B(\id, \A_{\rg},\Sigma^k\F))),$$ and similarly on $$\pi_{s,*}(\B(\id,\tiLie,\AR(\Sigma^k\F)))\cong \pi_{s,*}(\Lambda^\bullet(\B(\id, \A_{\R},\Sigma^k\F))).$$
Suppose that $\xi$ is a cycle in $\mathrm{AR}^{>0}_s(\Sigma^k\F)$.
In the total complex of $\B(\id,\tiLie,\AR^{>0}(\Sigma^k\F))$, a representative for the homotopy class  $\gamma_i([\xi])$ is $$\gamma_i(\xi) =\sum_{(A,B)\in V(s,i)}\langle s_{a_i}\cdots s_{a_2}s_{a_1}(\xi), s_{b_1}s_{b_2}\cdots s_{b_i}(\xi)\rangle\in \tiLie\circ(\A_{\rg})^{\circ (s+i)}(\Sigma^k\F). $$ 
\end{remark}
When we iterate the $\gamma_i$ operations, the formula is harder to write down explicitly. 
\begin{notation}\label{bracketcyclecompletion}
Suppose that $V_\bullet$ is a simplicial $\F$-module as a trivial simplicial $\tiLie$-algebra. For distinct classes $[\xi_1],\ldots, [\xi_n]\in\pi_{*,*}(V_\bullet)$, denote by $B(\xi_1,\ldots,\xi_n)$ the cycle in the normalized complex of $\B(\id, \tiLie,V_\bullet)$ that represents the class  $[\xi_1]\tens\cdots\tens [\xi_n]\in \pi_{*,*}(\Lambda^{n-1}(V_\bullet))\subset \pi_{*,*}(\mathrm{CE}(V_\bullet))\cong \pi_{*,*}\B(\id, \tiLie,V_\bullet)$, which is obtained by cycle completion via the Jacobi identity in the sense of \Cref{cyclecompletion}.
\end{notation}
Therefore a homotopy class $[\xi_1]\tens\cdots\tens[\xi_l]$ with $l>1$ in $\pi_{s,*}(\Lambda^\bullet(\B(\id, \A_{\rg},\Sigma^k\F)))$
is represented by an element $B(\xi_1,\ldots, \xi_l)$ in the summand $(\tiLie)^{\circ (l-1)}\circ(\A_{\rg})^{\circ (s-l+1)}(\Sigma^k\F)$ of the total complex  of $\B(\id,\tiLie,\AR^{>0}(\Sigma^k\F))$. Since a representative for the homotopy class $\gamma_{j}\gamma_i(\xi)$ in the total complex of $\Lambda^\bullet(\B(\id, \free^{\Mod_{\rg}}_{\Mod_{\F}},\Sigma^k\F))$ is given by $$\gamma_j\gamma_i(\xi) =\sum_{(C,D)\in V(s+i+1,j)}\sum_{(A,B)\in V(s,i)}s_C\big(s_{A}(\xi)\tens s_{B}(\xi)\big)\tens s_D\big(s_{A}(\xi)\tens s_{B}(\xi)\big),$$
 a representative for $\gamma_{j}\gamma_i(\xi)$ in the total complex of $\B(\id,\tiLie,\AR^{>0}(\Sigma^k\F))$ is given the sum of  over all $(A,B)\in V(s,i), (C,D)\in V(s+i+1,j)$ of $B( s_Cs_{A}(\xi), s_Cs_{B}(\xi), s_Ds_{A}(\xi), s_Ds_{B}(\xi))$, with the three brackets coming from distinct simplicial filtrations.

\begin{theorem}\label{quillenhomologytrivial}
 The Quillen homology $$\AQ_{*,*}(\Omega^n\free^{\Mod_{\R}}_{\Mod_{\F}}(\Sigma^{n+k}\F))\cong\pi_{s,t}\B(\id,\Lie_{\R},\Omega^n\free^{\Mod_{\R}}_{\Mod_{\F}}(\Sigma^{n+k}\F))$$ of the  $\Lie_{\R}$-algebra $\Omega^n\free^{\Mod_{\R}}_{\Mod_{\F}}(\Sigma^{n+k}\F), 1\leq n\leq \infty$ is isomorphic as a bigraded vector space to the  exterior algebra on generators $\gamma_I\Q_J(x_k)$, where $I=(i_1,\ldots, i_m)$ is $\gamma$-admissible with $e(I)\leq r$ and $i_m\geq 2$, whereas $J=(j_1,\ldots,j_r)$ satisfies 
  $0\leq j_l\leq j_{l+1}+1$ for $l<r$, $0\leq j_r<n$ and if $j_1=0$ then either $r=1$ or $i_m=2$.
\end{theorem}
Recall from \Cref{stablization} that in the case $n=\infty$, $\Sigma^{n+k}\F\simeq\underset{n\rightarrow\infty}{\mathrm{lim}\ }\Omega^n\free^{\Mod_{\R}}_{\Mod_{\F}}(\Sigma^{n+k}\F)$ is the trivial $\Lie_{\R}$-algebra $\Sigma^k\F$.

Before we proceed to prove the theorem, we provide some intuition about the strategy. Since the input $\Lie_{\R}$-algebra $\Omega^n\free^{\Mod_{\R}}_{\Mod_{\F}}(\Sigma^{n+k}\F)$ has vanishing Lie brackets,  \Cref{rmk: modified may} allows us to consider a single May-type spectral sequence by considering the length filtration on $\Omega^n\free^{\Mod_{\R}}_{\Mod_{\F}}(\Sigma^{n+k}\F)$ shifted up by two.
From the construction of the May-type spectral sequence, we see that there is a higher differential on a class on the $E^1$-page $\cong\pi_{*,*}(\B(\id,\tiLie_{\R},L))$ if and only if its representative cycle, considered as an element in $\B(\id,\Lie_{\R},L)$, admits a face map that evaluates a non-self-bracket to a self-bracket. \Cref{delta1} and \Cref{MayE1page} indicate that  $\gamma_1$ is the only operation that arises in $\pi_{*,*}(\B(\id,\tiLie_{\R},L))\cong\Lambda\{\gamma_I\Q_J(x)\}$ with $I$ $\gamma$-admissible precisely because self-brackets are zero in $\tiLie_{\R}$-algebras and thus generates all the differentials in the May-type spectral sequence.  Hence we expect $\pi_{*,*}\B(\id,\Lie_{\R},L)$ to be a quotient of $\pi_*(\B(\id,\tiLie_{\R},L))$ (cf. \Cref{MayE1page}) by a suitable ideal generated by $\gamma_1(\alpha)$ for all $\alpha\in\pi_{*,*}(\AR(L))$, and we use the induced map on homotopy groups of $\varphi:\B(\id,\Lie_{\R},L)\rightarrow\B(\id,\tiLie_{\rg},L)$ from \Cref{comparison} to help detect the differentials and permanent cycles. 
%\begin{lemma}\label{cokernel}
%Suppose that $L$ is the $\tiLie_{\rg}$-algebra $\Omega^n\free^{\Mod_{\rg}}_{\Mod_{\F}}(\Sigma^{n+k}\F), n\geq 1$, with $\Sigma^k\F$ the limiting case $n=\infty$. Then  the cokernel of $\varphi_*:\pi_{*,*}\B(\id,\Lie_{\R},L)\rightarrow\pi_{*,*}\B(\id,\tiLie_{\rg},    L)$ is the ideal generated by all $\gamma_I\gamma_1\Q_J(x_k)$, where $J=(j_1,\ldots,j_r)$ satisfies $1\leq j_l\leq j_{l+1}+1$ for $l<r$ and $1\leq j_r<n$,  while $\gamma_I\gamma_1$ is $\gamma$-admissible with $e(I,1)\leq r$.  
%\end{lemma}

\begin{proof}[Proof of \Cref{quillenhomologytrivial}]
We focus on the case $L=\Sigma^k\F$, since in the cases $n<\infty$ the only difference is an extra condition on the rightmost operation in basis elements, so the same argument applies with no change.

Consider the  map $\varphi_*:\pi_{*,*}\B(\id,\Lie_{\R},L)\rightarrow\pi_{*,*}\B(\id,\tiLie_{\rg},
    L)$ from \Cref{comparison}.
Its cokernel consists of all cycles in $\B(\mathrm{id}, \tiLie_{\rg}, \Sigma^k\F)$ whose preimage is the source of a differential to an element that is in the kernel of $\varphi$. Since $\varphi$ is surjective by \Cref{comparison}, this is equivalent to finding all classes $\alpha$ that are cycles in $\B(\mathrm{id}, \tiLie_{\rg}, \Sigma^k\F)$ precisely because the differential $\partial'$ in the normalized complex of $\B(\mathrm{id}, \tiLie_{\rg}, \Sigma^k\F)$  sends $\alpha$ to a linear combination of elements that contain self-brackets or $\Q_0$. In other words, via the inclusion to $\pi_{*,*}\B(\id,\tiLie_{\R},\Sigma^k\F)$ in \Cref{homotopytiLieR}.(3), all elements in the cokernel of $\varphi_*$ support differentials in the May spectral sequence.

We start with the generators of the exterior algebra, cf. \Cref{basisfortiLieAR}. Let $[\alpha]=\Q_{j_1}\Q_{j_2}\cdots\Q_{j_r}(x_k)$ be a basis element of $\pi_{*,*}\B(\mathrm{id}, \tiLie, \AR^{>0}(\Sigma^k\F))$, represented by a cycle $\alpha=\Q_{j_1}|\cdots|\Q_{j_r}|x_k+\sum_l \Q_{j'_1}|\cdots|\Q_{j'_r}|x_k$ in  $\B(\id,\tiLie_{\rg},
    \Sigma^k\F)$. The terms in the summation comes from cycle completion via Behrens' relations in the sense of \Cref{cyclecompletion}, with the condition that any term containing $\Q_0$ is 0. It has preimage $\tilde{\alpha}$ the cycle completion of $\Q_{j_1}|\cdots|\Q_{j_r}|x_k$ in $ \B(\id,\Lie_{\R},\Sigma^k\F)$ via Behrens' relations, which is the sum of $\alpha$ and terms $\Q_{j'_1}|\cdots|\Q_{j'_r}|x_k$ such that at least one of the $\Q_{j'_l}, l>1$ is equal to $\Q_0$. 
By \cite[Lemma 3.1]{bokstedt},  the differential $\partial$ in the normalized complex of $\B(\mathrm{id}, \tiLie_{\rg}, \Sigma^k\F)$ sends $\gamma_i(\alpha), i\geq 2$ to zero  because the terms are either zero or cancel out in pairs due to the simplicial identities of face and degeneracy maps. Hence its preimage $\gamma_i(\tilde{\alpha})$ is also a cycle in the normalized complex of  $ \B(\id,\Lie_{\R},\Sigma^k\F)$ and hence a permanent cycle in the May spectral sequence.  Similarly, for any $\gamma$-admissible sequence $I=(i_1,\ldots,i_m)$ with $i_m\geq 2$, $\gamma_I(\alpha)$ lifts to  a cycle $\gamma_I(\tilde{\alpha})$ in $ \B(\id,\Lie_{\R},\Sigma^k\F)$ and hence a permanent cycle in the May spectral sequence. By naturality of the $\gamma_i$ operations and \Cref{homotopytiLieR}.(3), the class $\gamma_{I}(\alpha)$ with $i_m\geq 2$ and $\alpha\in\pi_{*,*}(\AR(\Sigma^k\F))$ is also a permanent cycle. 

On the other hand, the differential $\partial$ sends $\gamma_1(\alpha)$ to $\langle \alpha, \alpha\rangle=0$ in $\B(\mathrm{id}, \tiLie_{\rg}, \Sigma^k\F)$, whereas its preimage $\gamma_1(\tilde{\alpha})=[s_0\tilde{\alpha}, s_1\tilde{\alpha}]$ maps to $[\tilde{\alpha},\tilde{\alpha}]=\Q_0|\tilde{\alpha}$ under the differential in $ \B(\id,\Lie_{\R},\Sigma^k\F)$, which is in the kernel of $\varphi$. In other words, there is a differential in the May-type spectral sequence  from $\gamma_1(\alpha)\in\pi_{*,*}\B(\id,\tiLie_{\R}, \Sigma^k\F)$ to $\Q_0\alpha$.  Similarly, for any $\gamma$-admissible sequence $I=(i_1,\ldots,i_m)$ with $i_m\geq 2$, $\gamma_I\gamma_1(\alpha)$ is a cycle in $\B(\mathrm{id}, \tiLie_{\rg}, \Sigma^k\F)$ because of the self-bracket in $\partial \gamma_I\gamma_1(\alpha)=\gamma_I(\partial(\gamma_1(\alpha)))$ if the simplicial degree of $\alpha$ is $r>1$ and $$\partial \gamma_I\gamma_1(\alpha)=\partial(\gamma_1(\alpha))\tens \gamma_1(\alpha)\tens \gamma_2\gamma_1(\alpha)\tens\cdots\tens \gamma_{2^{m-1}}\cdots\gamma_2\gamma_1(\alpha)$$ if $r=1$, cf. \cite[3.9.(i)]{hm}. On the other hand, its preimage $\gamma_I\gamma_1(\tilde\alpha)$ is mapped by the total differential in $\B(\id,\Lie_{\R},\Sigma^k\F)$ to the cycle completion $B(\Q_0|\tilde\alpha, \gamma_1(\tilde\alpha),\cdots, \gamma_{2^{m-1}}\cdots\gamma_2\gamma_1(\tilde\alpha))$ (cf. \Cref{bracketcyclecompletion}) if $r=1$, and to  $\gamma_I([\tilde{\alpha},\tilde{\alpha}])$ when $r>1$.  Note that in $\pi_{*,*}\B(\id,\tiLie_{\R},\Sigma^k\F)\cong \Lambda\{\gamma_I\Q_J(x_k)\}$ with $I$ $\gamma$-admissible and $J$ satisfying certain conditions, we have $[\gamma_I([\tilde{\alpha},\tilde{\alpha}])]=[\gamma_{I'}(\Q_0|\tilde{\alpha})]$ with $I'=(i_1+2^{m-1},\ldots,i_m+1)$. There is a shift in the indexing of the $\gamma$ operations because by construction the self-brackets appearing in the same bracket term live in distinct filtrations when more $\gamma$'s are applied, so replacing each self-bracket by a $\Q_0$ in a cycle will increase the index of the acting $\gamma_i$ by one. In particular, we note that $i_m+1\geq 3$. Hence there is a differential in the May-type spectral sequence  from $\gamma_I\gamma_1(\alpha)$ to $\gamma_{I'}(\Q_0|\alpha)$, and all the generators $\gamma_I\gamma_1(\alpha)$ of the exterior algebra $\pi_{*,*}\B(\mathrm{id}, \tiLie, \AR^{>0}(\Sigma^k\F))$ are in the cokernel of $\varphi_*$. Again by naturality of the $\gamma_i$ operations and \Cref{homotopytiLieR}.(3), the class $\gamma_{I}\gamma_1(\alpha)\in\pi_{*,*}\B(\id,\tiLie_{\R},\Sigma^k\F)$ supports a differential to $\gamma_{I'}(\Q_0\alpha)$ in the May-type spectral sequence.

In general, suppose  $[\alpha]$ is a basis element of  $\pi_{*,*}\B(\id,\tiLie_{\R},\Sigma^k\F)\cong\pi_{*,*}\B(\mathrm{id}, \tiLie, \AR(\Sigma^k\F))$ that is the exterior product of generators $\gamma_{I_1}([\alpha_1]),\ldots,\gamma_{I_n}([\alpha_n])$ with each $\alpha_i$ the cycle completion of a basis element $[\alpha_i]\in\pi_{*,*}\AR(\Sigma^k\F)$. It is represented by a cycle $\alpha=B(\gamma_{I_1}(\alpha_1),\ldots,\gamma_{I_n}(\alpha_n))$ in  the total complex of $\B(\mathrm{id}, \tiLie, \AR(\Sigma^k\F))$, cf. \Cref{bracketcyclecompletion}, since $d_j(\gamma_{I_l}(\alpha_l))=0$ for all $j$ and $l$ by \Cref{bokstedt}. Then $[\alpha]$ supports a differential in the May-type spectral sequence if and only if at least one of the $\gamma$-admissible sequences $I_l$ is of the form $I_l=(i_{l_1},\ldots, i_{l_m},1)$. By \Cref{MayE1page}, the above covers all classes in the $\F$-basis of the $E^1$-page of $\pi_{*,*}\B(\id,\tiLie_{\R},\Sigma^k\F)$.
\end{proof}

\begin{remark}\label{E2comonad}
  Note that $\pi_{*,*}\B\big(\mathrm{id},\Lie_{\R},\Sigma^k\F\big)$ is the cofree coalgebra on  $\Sigma^k\F$ over the comonad $$|\B\big(\id,\Lie_{\R},-)|:=\pi_{*,*}\B(\mathrm{id},\free_{\Lie_{\R}},-)$$ on $\Mod_{\F}$. The coalgebra structure map  is given by 
\begin{align*}
|\B\big(\id,\Lie_{\R},\Sigma^k\F\big)|&\xleftarrow{\simeq}|\B\big(\id,\Lie_{\R},|\B\big(\free^{\Lie_{\R}}_{\Mod_{\F}},\Lie_{\R},\Sigma^k\F\big)|\big)|\\&\rightarrow|\B\big(\id,\Lie_{\R},|\B\big(\id,\Lie_{\R},\Sigma^k\F\big)|\big)|,
\end{align*}
where the last map makes use of the augmentation $\free^{\Lie_{\R}}_{\Mod_{\F}}\rightarrow\id$, cf. \cite[Appendix D]{brantner}. In particular, $\pi_{*,*}\B\big(\mathrm{id},\Lie_{\R},\Sigma^k\F\big)$ records all natural unary operations on a degree $k$ class in the mod 2 Quillen homology of $\Lie_{\R}$-algebras, and \Cref{quillenhomologytrivial} gives us a dimension count.
\end{remark}

\section{Application to the Knudsen spectral sequence}\label{section4}
The rest of the paper is devoted to studying the mod $p$ homology of labeled configuration spaces using the computation of Quillen homology of spectral Lie algebras. The coefficients for homology is $\F$ unless otherwise specified.

\

Let $M$ be a manifold of dimension $n$ and $X$ a spectrum. The configuration space of $k$ points in $M$ labeled by $X$ is the spectrum $$B_k(M;X)=\Sigma^{\infty}_+\mathrm{Conf}_k(M)\tens_{\Sigma_k} X^{\tens k},$$ considered as a weighted spectra of weight $k$. Here  $\mathrm{Conf}_k(M)$ is the space of $k$-tuples of pairwise distinct points in $M$.
Denote by $s\lie$ the monad associated to the  free spectral Lie algebra functor $\free^{s\lie}$. The $\infty$-category of spectral Lie algebras is cotensored in Spaces, and we write $(-)^{M^+}$ for the cotensor with the one-point compactification of $M$ in this category. In \cite{ben}, Knudsen established the following equivalence using factorization homology, cf. \cite[Theorem 5.1]{bhk}.
\begin{theorem}\cite[Section 3.4]{ben}
Let $M$ be a parallelizable $n$-manifold and $X$ a spectrum. Consider $X$ as a weighted spectrum of weight one. Then there is an equivalence of weighted spectra
$$\bigoplus_{k\geq 1} B_k(M;X)  \simeq \mid \B(\mathrm{id}, s\lie, \free^{s\lie}(\Sigma^{n}X) ^{M^+}) \mid. $$ The left hand side is weighted by the index $k$; the weight filtration on the right hand side is given by propagating the weight on $X$ via the free spectral Lie operad functor.
\end{theorem}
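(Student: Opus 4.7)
The plan is to deduce this equivalence from factorization homology together with Koszul duality between the nonunital $\E_n$-operad and the spectral Lie operad.

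First, I would identify the left hand side with factorization homology of a free nonunital $\E_n$-algebra. Since $M$ is parallelizable, the factorization homology functor takes honest $\E_n$-algebras (rather than merely framed-$\E_n$-algebras) as inputs, and for a free $\E_n^{\mathrm{nu}}$-algebra on a spectrum $X$ the factorization homology $\int_M \free^{\E_n^{\mathrm{nu}}}(X)$ splits as a weighted spectrum by arity, with weight $k$ piece
$$\Sigma^{\infty}_+\mathrm{Conf}_k(M)\tens_{\Sigma_k} X^{\tens k} = B_k(M;X).$$
This is the standard decomposition of factorization homology applied to a free algebra on a single spectrum and accounts for the entire left hand side.

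Second, I would invoke Koszul duality. Ching's equivalence $s\lie \simeq \mathbb{D}\mathrm{Bar}(1,\E_\infty^{\mathrm{nu}},1)$ admits an $n$-shifted refinement $\mathbb{D}\mathrm{Bar}(1,\E_n^{\mathrm{nu}},1) \simeq \Sigma^n s\lie$ after operadic desuspension, so under the resulting bar--cobar adjunction the free $\E_n^{\mathrm{nu}}$-algebra on $X$ is Koszul dual to the free spectral Lie algebra $\free^{s\lie}(\Sigma^n X)$. The $n$-fold suspension appearing on the generators in the statement is precisely this dimensional shift.

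Third, I would apply Poincar\'e--Koszul duality for factorization homology, in the form used by Knudsen, to convert the manifold integral into a bar construction of a cotensored spectral Lie algebra. This produces a natural equivalence
$$\int_M \free^{\E_n^{\mathrm{nu}}}(X) \;\simeq\; \bigl|\B\bigl(\id,\, s\lie,\, \free^{s\lie}(\Sigma^n X)^{M^+}\bigr)\bigr|,$$
in which the cotensor $(-)^{M^+}$ is formed in the $\infty$-category of spectral Lie algebras and packages the topology of $M$ into the spectral Lie algebra whose topological Quillen object is then computed. Combined with the first step, this yields the theorem.

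The principal technical obstacle lies in arranging these equivalences at the $\infty$-categorical level for arbitrary, possibly nonconnective, spectra $X$ while matching weight gradings on both sides. One must carefully track the $n$-fold suspension shift inherent in the Koszul duality between $\E_n^{\mathrm{nu}}$ and $s\lie$, verify that the cotensor $(-)^{M^+}$ in spectral Lie algebras is well-defined and interacts correctly with the bar construction for nonconnective inputs, and confirm that the weight grading on factorization homology is compatible with the arity grading on both the free spectral Lie algebra and the bar construction.
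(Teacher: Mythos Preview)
The paper does not give its own proof of this theorem; it is quoted verbatim from Knudsen's work \cite[Section 3.4]{ben} (see also \cite[Theorem 5.1]{bhk}) and used as a black box input to the subsequent spectral sequence analysis. There is therefore no proof in the paper to compare against.

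That said, your outline is a fair summary of the strategy in Knudsen's original argument: identify the left hand side as factorization homology $\int_M \free^{\E_n^{\mathrm{nu}}}(X)$, then use the Koszul duality between $\E_n^{\mathrm{nu}}$ and the shifted spectral Lie operad to rewrite this as Lie algebra chains. One point of emphasis you might sharpen: in Knudsen's treatment the passage is organized through the \emph{higher enveloping algebra} functor $U_n$ from spectral Lie algebras to $\E_n$-algebras, together with the identification $\int_M U_n(\mathfrak{g}) \simeq |\B(\id, s\lie, \mathfrak{g}^{M^+})|$; the specific case of a free Lie algebra $\mathfrak{g}=\free^{s\lie}(\Sigma^n X)$ then yields $U_n(\mathfrak{g})\simeq \free^{\E_n^{\mathrm{nu}}}(X)$ and the configuration-space decomposition you cite. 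Framing it this way makes the role of the cotensor $(-)^{M^+}$ and the $n$-fold shift transparent rather than something to be ``tracked,'' and avoids having to invoke Poincar\'e--Koszul duality as a separate step.
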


 Applying the bar spectral sequence (\Cref{barsseq}) to the bar construction on the right hand side, we obtain the following:

\begin{proposition}
There is a weighted spectral sequence
\begin{equation}\label{knudsensseq}
    E^2_{s,t}=\AQ_{s,t}(H_*(\free^{s\lie}(\Sigma^{n}X) ^{M^+}))\Rightarrow \bigoplus _{k\geq 1}H_{s+t}(B_k(M;X)).
\end{equation}
\end{proposition}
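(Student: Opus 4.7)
The plan is to apply the general bar spectral sequence of \Cref{barsseq} to the specific spectral Lie algebra $L := \free^{s\lie}(\Sigma^n X)^{M^+}$ and then identify the abutment using Knudsen's equivalence (\ref{knudsen}). First I would check that $L$ is indeed a spectral Lie algebra, which is immediate since the cotensor $(-)^{M^+}$ is taken in the $\infty$-category of spectral Lie algebras, so it produces a spectral Lie algebra out of $\free^{s\lie}(\Sigma^n X)$. The spectrum $X$ is declared to be weight one, which induces a weight grading on $\free^{s\lie}(\Sigma^n X)$ since $s\lie$ is a symmetric sequence operad and its action preserves weight; the cotensor with $M^+$ (which has weight zero) preserves this grading, and so $L$ is a weighted spectral Lie algebra.

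Next, I would apply \Cref{barsseq} to this $L$, producing the bar spectral sequence
\[
E^2_{s,t} = \pi_{s,t}\B\bigl(\id, \Lie_{\R}, H_*(L;\F)\bigr) \cong \AQ_{s,t}\bigl(H_*(L;\F)\bigr) \Rightarrow \TQ_{s+t}(L;\F),
\]
where by definition $\TQ_*(L;\F) = \pi_*\bigl(|\B(\id, s\lie, L)|\tens\F\bigr)$. Since the monad $s\lie$ and the functor $(-)\tens\F$ both preserve the weight grading, the skeletal filtration of the geometric realization giving rise to this spectral sequence is compatible with weights, and so the spectral sequence is one of weighted $\F$-modules.

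Third, I would identify the abutment. Knudsen's equivalence provides an isomorphism of weighted spectra
\[
|\B(\id, s\lie, L)| = |\B(\id, s\lie, \free^{s\lie}(\Sigma^n X)^{M^+})| \simeq \bigoplus_{k\geq 1} B_k(M;X),
\]
and applying $(-)\tens \F$ followed by homotopy groups gives $\TQ_{s+t}(L;\F) \cong \bigoplus_{k\geq 1} H_{s+t}(B_k(M;X);\F)$, matching the weight indexing.

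This is essentially a bookkeeping exercise, and the only real subtlety is ensuring that Knudsen's equivalence is compatible with the weight grading on both sides, which is built into the statement since the right-hand side is indexed by the number of points $k$ and the left-hand side inherits its weight from the weight-one structure on $X$. There is no hard analytic step: the entire argument is combining \Cref{barsseq} with Knudsen's equivalence and tracking weights, so I do not anticipate any genuine obstacle.
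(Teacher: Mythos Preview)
Your proposal is correct and follows exactly the paper's approach: the paper simply states that the proposition is obtained by ``applying the bar spectral sequence (\Cref{barsseq}) to the bar construction on the right hand side'' of Knudsen's equivalence, and your argument spells out precisely this in more detail, including the weight compatibility.
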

The $\Lie_{\R}$-algebra structure on the $\F$-module $$H_*(\free^{s\lie}(\Sigma^{n}X) ^{M^+})\cong \widetilde{H}^*(M^+)\tens H_*(\free^{s\lie}(\Sigma^{n}X))\cong \widetilde{H}^*(M^+)\tens \free^{\Lie_{\R}}_{\Mod_{\F}}(H_*(\Sigma^n X))$$ has an explicit description.

\begin{proposition}\cite[Proposition 5.9]{bhk}\label{bracketbhk}
Let $\mathfrak{g}$ be a spectral Lie algebra. Then there is a spectral Lie algebra structure on the cotensor $\mathfrak{g} ^{M^+}$ in the category of spectra. The weight two structural map factors as
$$\partial_2(\mathrm{Id})\tens (\mathbb{D}(M^+)\tens \g)^{\tens2}_{h\Sigma_2}\rightarrow  \mathbb{D}(M^+)^{\tens2}_{h\Sigma_2}\tens(\partial_2(\mathrm{Id})\tens \g^{\tens2}_{h\Sigma_2})\xrightarrow{\mathbb{D}(\delta^*)\tens\xi_*}\mathbb{D}(M^+)\tens \g,$$ where $\mathbb{D}$ is the Spanier-Whitehead dual and $\delta$ the diagonal embedding.
\end{proposition}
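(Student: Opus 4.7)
The plan is to construct the spectral Lie algebra structure on $\g^{M^+}$ by transporting the structure on $\g$ through the lax symmetric monoidal endofunctor $\mathbb{D}(M^+)\tens(-)$ of spectra, and then to unwind the weight~$2$ map into the claimed factorization.

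First, I would record existence and the underlying spectrum. Since the $\infty$-category of spectral Lie algebras is presentable and its forgetful functor to spectra creates limits, it is cotensored over spaces; in particular $\g^{M^+}$ makes sense as a spectral Lie algebra. Its underlying spectrum is $\mathrm{Map}_{\mathrm{Sp}}(\Sigma^{\infty}M^+,\g)\simeq \mathbb{D}(M^+)\tens\g$, identified via Spanier--Whitehead duality on a compact exhaustion of $M$.

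Next, I would set up the diagonal. The map $\delta\colon M^+\to M^+\wedge M^+$ is $\Sigma_2$-equivariant with trivial action on the source and the swap action on the target, so its dual $\delta^*\colon \mathbb{D}(M^+)^{\tens2}\to\mathbb{D}(M^+)$ is $\Sigma_2$-equivariant with trivial action on the target. By the universal property of homotopy orbits applied to a trivially-acted-on target, $\delta^*$ descends to a map $\mathbb{D}(M^+)^{\tens2}_{h\Sigma_2}\to\mathbb{D}(M^+)$.

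Then I would identify the weight~$2$ map. Regrouping tensor factors,
$$\partial_2(\mathrm{Id})\tens\bigl(\mathbb{D}(M^+)\tens\g\bigr)^{\tens2}_{h\Sigma_2}\;\simeq\;\partial_2(\mathrm{Id})\tens\bigl(\mathbb{D}(M^+)^{\tens2}\tens\g^{\tens2}\bigr)_{h\Sigma_2},$$
where $\Sigma_2$ acts diagonally on the two pairs. Restriction of the $\Sigma_2\times\Sigma_2$ action along the diagonal inclusion $\Sigma_2\hookrightarrow\Sigma_2\times\Sigma_2$ induces the canonical comparison
$$\bigl(\mathbb{D}(M^+)^{\tens2}\tens\g^{\tens2}\bigr)_{h\Sigma_2}\;\longrightarrow\;\mathbb{D}(M^+)^{\tens2}_{h\Sigma_2}\tens\g^{\tens2}_{h\Sigma_2},$$
which supplies the first arrow. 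The second arrow is then obtained by combining the descended $\delta^*$ on the Spanier--Whitehead factors with the structure map $\xi_*$ of $\g$ on the operadic factor, producing a map to $\mathbb{D}(M^+)\tens\g\simeq \g^{M^+}$.

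The main obstacle will be verifying that these weight~$2$ maps cohere into an honest $s\lie$-algebra structure on $\g^{M^+}$, rather than merely defining an isolated $\Sigma_2$-equivariant pairing. I would resolve this by upgrading $\mathbb{D}(M^+)\tens(-)$ to a lax symmetric monoidal endofunctor of spectra via the cocommutative coalgebra structure on $M^+$ coming from $\delta$, so that it automatically lifts to an endofunctor on algebras over any spectral operad; the construction above is then the weight~$2$ instance of the induced structure, and higher-arity maps are obtained by the evident $n$-fold analogue.
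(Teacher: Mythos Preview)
The paper does not prove this proposition; it is quoted from \cite[Proposition 5.9]{bhk} and used as a black box. Your argument is correct and is essentially the standard one: the cotensor functor $(-)^{M^+}$ on spectra is lax symmetric monoidal via the diagonal $\delta$ on $M^+$, hence transports algebras over any spectral operad to algebras over the same operad, and unwinding the arity-$2$ structure map gives exactly the displayed factorization. Your handling of the $\Sigma_2$-equivariance---descending $\delta^*$ to homotopy orbits because the target carries the trivial action, and producing the first arrow from the diagonal inclusion $\Sigma_2\hookrightarrow\Sigma_2\times\Sigma_2$---is the right mechanism.

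One small point worth tightening: the identification of the underlying spectrum $\g^{M^+}\simeq\mathbb{D}(M^+)\tens\g$ requires $\Sigma^\infty M^+$ to be dualizable. For the manifolds appearing in the applications this holds (the one-point compactification of a tame open manifold is a finite CW complex), so your ``compact exhaustion'' remark is pointing in the right direction but can be replaced by a direct finiteness statement.
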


As a result,  the shifted Lie bracket on
$\widetilde{H}^*(M^+)\tens H_*(\mathfrak{g})$ is given by $$[y_1\tens x_1,  y_2\tens x_2]:=(y_1\cup y_2)\tens[x_1,x_2].$$

On the other hand, the Steenrod operations on $H^*(M^+)$ induces a twisted $\R$-module structure in the cotensor.

\begin{proposition}\label{steenrodtwistQ}
The operations $\Q^j$ act on $\widetilde{H}^*(M^+)\tens H_*(\mathfrak{g})$ by
$$\Q^j(y \tens x)=\sum_{i}Sq^{i-j}(y)\tens \Q^i(x).$$
\end{proposition}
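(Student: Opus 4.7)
The plan is to unwind the definition of $\Q^j$ in terms of the extended square construction, then apply the Cartan formula and use the classical identification $Q^{-s} = Sq^s$ on cohomology.

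First, I would recall that by definition, for any spectral Lie algebra $\mathfrak{h}$ and any class $z \in H_*(\mathfrak{h})$, one has $\Q^j(z) = \xi_* \sigma^{-1} Q^j(z)$, where $Q^j(z) = e_{j-|z|} \tens z \tens z$ is the extended square representative and $\xi$ is the weight-two structural map of $\mathfrak{h}$. Applying this to $\mathfrak{h} = \mathfrak{g}^{M^+}$ and using the factorization of the weight-two structural map given in \Cref{bracketbhk}, the computation of $\Q^j(y\tens x)$ separates across the two tensor factors $\mathbb{D}(M^+)$ and $\mathfrak{g}$: the extended square is produced on the combined factor $\mathbb{D}(M^+)\tens\mathfrak{g}$, then split via the $\Sigma_2$-equivariant shuffle, and finally collapsed by $\mathbb{D}(\delta^*)\tens\xi_*$.

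Next, I would apply the Cartan formula for extended Dyer–Lashof operations, $Q^j(a\tens b) = \sum_i Q^{j-i}(a)\tens Q^i(b)$, to $Q^j(y\tens x)$ in the homology of $(\mathbb{D}(M^+)\tens\mathfrak{g})^{\tens 2}_{h\Sigma_2}$. This expresses $Q^j(y\tens x)$ as $\sum_i Q^{j-i}(y)\tens Q^i(x)$, where the first factor lives in the extended square of $\mathbb{D}(M^+)$ and the second in that of $\mathfrak{g}$. Post-composing with $\mathbb{D}(\delta^*)\tens\xi_*\sigma^{-1}$ then evaluates each factor separately.

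Third, I would evaluate each factor. On the $\mathfrak{g}$ side, $\xi_*\sigma^{-1}Q^i(x)$ is exactly $\Q^i(x)$ by the definition recalled above. On the $\mathbb{D}(M^+)$ side, the structural map is the cup product $\mathbb{D}(\delta^*)$; via Spanier–Whitehead duality, classes in $H_*(\mathbb{D}(M^+))$ correspond to classes in $\widetilde{H}^*(M^+)$ with reversed grading, and the extended square construction followed by $\mathbb{D}(\delta^*)$ computes the Steenrod operation on cohomology. The identification $Q^{-s} = Sq^s$ on cohomology \cite{maysq} then rewrites $\mathbb{D}(\delta^*)_* Q^{j-i}(y)$ as $Sq^{i-j}(y)$. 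Summing across $i$ yields $\Q^j(y\tens x) = \sum_i Sq^{i-j}(y)\tens \Q^i(x)$, as claimed.

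The main obstacle is sign and degree bookkeeping: verifying that, once the dualization is taken into account, the index shift in $Q^{j-i}(y)$ lines up with $Sq^{i-j}(y)$ and that the Cartan splitting is compatible with the $\Sigma_2$-equivariant shuffle used to factor $(\mathbb{D}(M^+)\tens\mathfrak{g})^{\tens 2}_{h\Sigma_2}$ through $\mathbb{D}(M^+)^{\tens 2}_{h\Sigma_2}\tens \mathfrak{g}^{\tens 2}_{h\Sigma_2}$. Both points are essentially formal given the standard results on extended power operations, so the argument is a concatenation of well-established identifications.
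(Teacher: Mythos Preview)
Your proposal is correct and follows essentially the same approach as the paper: the paper's argument (stated in the sentence preceding the proposition) is precisely to apply the Cartan formula $Q^j(y\tens x)=\sum_i Q^{j-i}(y)\tens Q^i(x)$ for extended Dyer--Lashof operations together with the identification $Q^{-i}=Sq^i$ from \cite{maysq} to the definition of $\Q^j$. Your write-up is more explicit about the role of the factorization in \Cref{bracketbhk} and the bookkeeping, but the content is the same.
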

\begin{proof}
    Applying the Cartan formula $Q^j(y\tens x)=\sum_{i}Q^{j-i}(y)\tens Q^i(x)$ for the extended Dyer-Lashof operations $Q^j: x\mapsto e_{j-|x|}\tens x\tens x$ and the identification $Q^{-i}=Sq^i$ \cite
{maysq} to the definition of the $\Q^j$ operations, we have $$\Q^j(y\tens x)= \xi_*\sigma^{-1}(\sum_{i}Sq^{i-j}(y)\tens Q^i(x))=\sum_{i}Sq^{i-j}(y)\tens \xi_*\sigma^{-1}Q^i(x)=\sum_{i}Sq^{i-j}(y)\tens \Q^i(x)$$
Here $\sigma^{-1}$ is the desuspension isomorphism, and $\xi$ is the second structure map of spectral Lie algebras.
\end{proof}

\subsection{The universal case }
Now we apply \Cref{quillenhomologytrivial} to the case where $M$ is the Euclidean space. While the homology for $B_k(\mathbb{R}^n;X)$ is well-understood \cite{bmms}\cite{CLM}\cite{Einfinity}, we observe interesting patterns of higher differentials in the associated Knudsen spectral sequence. Furthermore,  the computation of the $E^2$-page in these cases will be useful in deducing the $E^2$-page for a general $M$.

Since $\widetilde{H}^*(S^n)=\F\{\iota_n\}$ is concentrated in one dimension, the only nonzero Steenrod operation is $Sq^0=\mathrm{id}$, so the $\R$-module structure on $\widetilde{H}^*(S^n)\tens H_*(\g)$ is given by $$\Q^j(\iota_n\tens x)=\sigma^{-n}\Q^j(x)=\Q^j(\sigma^{-n}x), x\in\g.$$

In the limiting case $M=\mathbb{R}^{\infty}=\underset{n\rightarrow\infty}{\mathrm{lim\ }}\mathbb{R}^n$, we have the stabilization
$$\lim_{n\rightarrow\infty} \Omega^n\free^{s\lie}(\Sigma^{n}X) \simeq X,$$ and the spectral sequence (\ref{knudsensseq}) becomes 
\begin{equation}\label{stablesseq}
    E^2_{s,t}=\AQ_{s,t}(\Sigma^k\F)\Rightarrow H_{s+t}(\free^{\mathbb{E}_\infty}(\mathbb{S}^k)).
\end{equation}
The $E^2$-page is computed in \Cref{quillenhomologytrivial}. Namely, it is the exterior algebra generated by one class $x_k$ and two types of operations on coalgebras over the comonad $\pi_{*,*}\B(\id,\Lie_{\R},-)$ $$\Q^j:E^2_{h,s,t}\rightarrow E^2_{h,s+1, t+j-1},\ \ j\geq t$$  $$\gamma_i:E^2_{h,s,t}\rightarrow E^2_{2h+1,s+i, 2t-1},\ \ 2\leq i\leq s$$  under a further splitting of the filtration degree into a sum of  homological degree $h$ counting the number of brackets and  simplicial degree $s$ counting the number of $\Q^j$'s.

Comparing with the computation of $H_*(\free^{\mathbb{E}_\infty} (\mathbb{S}^k))$ \cite{Einfinity}\cite{bmms}, which is the $E^\infty$-page, we can immediately deduce that the $E^2$-page is much larger. Using sparsity arguments, we can identify higher differentials in low degrees, which allows us to make the following conjecture. 

\begin{conjecture} \label{conjecture}
Each page of the spectral sequence
$$E^2_{s,t}=\AQ_{s,t}(\Sigma^k\F)\Rightarrow \pi_{s+t}\B(\id,s\lie,\Sigma^k\F)\cong H_{s+t}(\free^{\mathbb{E}_\infty}(\mathbb{S}^k))$$ is an exterior algebra. The higher differentials  on the exterior generators of the $E^2$-page are given as follows:
\begin{enumerate}
    \item For an exterior generator $\alpha=\Q_{j_1}\cdots \Q_{j_m}(x_k)$ on the $E^2$-page,  we have
 $$d^{r+1}\gamma_{r+1}(\alpha)=\Q_{r}(\alpha)$$ for  $r<m$ and $r\leq j_1+1$.
 \item For an exterior generator $\beta=\gamma_{n+1}\Q_{j_1}\cdots \Q_{j_{m-1}}(x_k)$  on the $E^2$-page, we have
 \begin{enumerate}
     \item $d^{n+1}(\beta)=\Q_{n}\Q_{j_1}\cdots \Q_{j_{m-1}}(x_k)$,
     \item $d^{n+1}\gamma_{m+n}(\beta)=d^{n+1}(\beta)\tens \beta$,
     \item 
 $\gamma_{l} d^{n+1}(\beta)=d^{2n+1}\gamma_{n+l-1}(\beta)$ for $n+3\leq l\leq m$.
 \end{enumerate}
\end{enumerate}
  These generate all higher differentials under further applications of $\gamma_i$ operations in accordance with (2).(b) and (2).(c), as well as the exterior product.
\end{conjecture}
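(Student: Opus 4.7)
The plan is to prove \Cref{conjecture} in three stages: a Poincar\'e series comparison that pins down the shape of all higher differentials, a filtered deformation that actually constructs the base cases, and propagation via naturality and Leibniz. First, I would compare the $E^2$-page given by \Cref{quillenhomologytrivial} weight-by-weight with the classical May-McClure computation of $H_*(\free^{\mathbb{E}_\infty}(\mathbb{S}^k))$ as a polynomial algebra on admissible Dyer-Lashof monomials of excess at least $k$. A careful bidegree count on each weight-$2^n$ block should show that every $\gamma$-bearing exterior generator on the $E^2$-page must be killed, and the only candidate partners in the correct tridegree are precisely the classes listed as targets in (1) and (2)(a). This reduces the task to exhibiting at least one nonzero differential per such pair.

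Second, I would implement the deformation program the author announces with Senger. The idea is to lift the comonad $|\B(\id,s\lie,-)|$ to a filtered comonad on the $\infty$-category of Beilinson-connective filtered $\F$-modules, arranged so that the unary operations $\Q_j$ from \Cref{E2comonad} become natural transformations of strictly positive filtration weight while the operations $\gamma_i$ preserve filtration. The resulting Bousfield-Kan spectral sequence for the filtered bar construction acquires $\Q_j$ as a filtration-raising natural transformation, which forces it to occur as a higher differential out of $\gamma_{r+1}(\alpha)$ landing on $\Q_r(\alpha)$. Matching the filtered spectral sequence to Knudsen's beyond some explicit page identifies this filtration differential with $d^{r+1}$, yielding (1); the same analysis on $\gamma_{n+1}\Q_{j_1}\cdots\Q_{j_m}(x_k)$ yields (2)(a).

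Third, I would propagate the base cases to all remaining generators. Naturality of the differentials under the comonad coaction on $\pi_{*,*}\B(\id,\Lie_{\R},\Sigma^k\F)$ (\Cref{E2comonad}) transports a differential on $\alpha$ to one on $\Q_j(\alpha)$ and $\gamma_i(\alpha)$; the Leibniz rule on each page combined with (2)(a) gives (2)(b); and the commutation of $\gamma_i$ with $d^{r+1}$ in simplicial exterior algebras, which follows from the explicit B\"okstedt-Ottosen cycle representatives of \Cref{bokstedt} together with the Adem-type relations of \Cref{gamma}, produces (2)(c) as a Kudo-transgression-type identity. A final consistency check that the surviving classes on $E^\infty$ match the basis from the Poincar\'e count of stage one closes the argument.

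The main obstacle is stage two: producing a filtered lift of $|\B(\id,s\lie,-)|$ in which the $\Q_j$ operations are realized by genuine filtration-increasing natural transformations, and proving that the associated spectral sequence is compatible (from a known page onward) with Knudsen's spectral sequence. This is a substantial $\infty$-categorical construction, and it is precisely the content of the forthcoming joint work with Senger cited in the introduction. Once this machinery is available, stages one and three reduce to bigraded bookkeeping and standard Cartan-Bousfield-Dwyer calculus.
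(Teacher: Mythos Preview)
The statement you are attempting to prove is explicitly labeled a \emph{conjecture} in the paper, and the paper provides no proof of it. The surrounding text says only that the pattern was observed by comparing the $E^2$-page with the known $E^\infty$-page via sparsity arguments in low degrees, and that the author ``in joint work in progress with Andrew Senger'' uses a deformation of the comonad $|\B(\id,s\lie,-)|$ to Beilinson-connective filtered $\F$-modules to detect the higher differentials. There is therefore no proof in the paper against which to compare your proposal.

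Your outline is a reasonable strategy and in fact converges on precisely the mechanism the paper announces for the forthcoming work: the filtered lift of the bar comonad that realizes the $\Q_j$ as filtration-increasing natural transformations. You correctly identify this as the substantive obstacle. However, your stage one (Poincar\'e series comparison forcing every $\gamma$-bearing generator to die with a unique partner) is more delicate than you suggest: the paper only claims to have verified the pattern ``in low degrees'' by sparsity, not in general, and a full tridegree count across all weights would itself require an argument that the targets listed in (1) and (2)(a) are the \emph{only} candidates, which is not obvious once multiple $\gamma$'s are stacked. Your stage three also hides real content: the Kudo-type transgression identity you invoke for (2)(c), namely that $\gamma_i$ commutes with $d^{r+1}$ in the required sense, is exactly the kind of statement that fails for filtration-increasing operations (cf.\ \Cref{unviersaldiff}), so it cannot be deduced from the B\"okstedt--Ottosen representatives alone without the filtered machinery already in hand. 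In short, your proposal is not a proof but a plausible plan whose load-bearing step is the unpublished construction the paper itself defers to future work.
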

The figure below is an illustration of the higher differentials in homological Adams grading $(s+t,s)$ for $\beta=\gamma_{n+1}\Q_{j_1}\cdots \Q_{j_m}(x_k)$ and $\alpha=\Q_{n}\Q_{j_1}\cdots \Q_{j_{m-1}}(x_k)$ with internal degree $b$. Set $a=2b+m+1$. Along the horizontal line $s=m+1$ we have generators $\Q_1(\alpha),\ldots, \Q_{n+1}(\alpha)$, each receiving a blue differential via \Cref{conjecture}.(1). Along the top slope we have, for each $l$ with $n+2< l\leq m$, a cyan arrow $d_{2n+1}(\gamma_{n+l-1}(\beta))=\gamma_{l}(\alpha)$, which correspond to the differentials in \Cref{conjecture}.(2).(c). Finally we have a gray arrow hitting the cross term, corresponding to \Cref{conjecture}.(2).(b).
\begin{center}
   \begin{tikzcd}[column sep=0.0cm, row sep=-0.1cm,font=\scriptsize, every arrow/.append style=-latex]\label{pattern}
   2m+2n+3&&&&&&&&&&&&&&&&&\bullet\ar[lddddddd,gray]\\
    2m+2n+2&&&&&&&&&&&&&&&&\bullet\ar[ldddddddd,cyan]\\
   2m+2n+1&&&&&&&&&&&&&&&\bullet\ar[ldddddddd,cyan]\\
   \vdots\\
   m+3n+5&&&&&&&&&&&&\bullet\ar[lddddddddd,cyan]\\
   m+3n+4&&&&&&&&&&&\bullet\ar[lddddddddd,cyan]\\
   \vdots&&&&&&&&&&&&&\\
   2m+n+2&&&&&&&&&&&&&\ldots&&&\bullet\\
   \vdots&&&&&\\
   2m+1&&&&&&&&&&&&&&&\bullet\\
   2m&&&&&&&&&&&&&&\bullet\\
   \vdots&&&&\\
   m+n+5&&&\\
   m+n+4&&&&&&&&&&&\bullet\\
 m+n+3& &&&&&&&&&\bullet\\ 
m+n+2&&&&&&&&&\bullet\ar[ddddddl,color={rgb,256:red,0;green,72;blue,152}]\\
  m+n+2&&&&&&&&\bullet\ar[dddddl,color={rgb,256:red,0;green,72;blue,152}]&&\\
  \vdots&\\
  m+4&&&&&&\bullet \ar[dddl,color={rgb,256:red,0;green,72;blue,152}]&\cdots&\\
  m+3&&&&&\bullet\ar[ddl,color={rgb,256:red,0;green,72;blue,152}]\\
  m+2&\\
m+1&&&&\bullet&\bullet&\cdots&\bullet&\bullet&\\[-5pt]
   & &&&a+1&&\cdots&&a+n+1&&&&&
\end{tikzcd}.
\end{center}

\begin{remark}\label{unviersaldiff}
  The pattern in the universal case is similar to the pattern of universal higher differentials  in \cite[Proposition 2.6]{dwyerdiff} and \cite{turner}, where divided squares kills off Steenrod operations that are not admissible. Here, the Dyer-Lashof operations $\Q^j$ on the $E^\infty$-page should be represented by the surviving $\Q^j$ operations. On the $E^2$-page, the admissibility condition for $\Q^j$ allows for more admissible sequences than the Dyer-Lashof algebra. The $\gamma_i$ operations eliminate the $\Q^j$ operations that do not satisfy the admissibility condition for Dyer-Lashof operations via higher differentials. 
  
  One major difference is that while Steenrod operations can be defined on the spectral sequence filtration-wise in \cite{dwyerdiff} and \cite{turner}, the operations $\Q^j$ increase filtration by one. Hence the classical methods of producing operations on spectral sequences by chain-level constructions no longer apply.
  
\end{remark}
 
 In forthcoming work with Robert Burklund and Andrew Senger, we use a suitable deformation of the comonad associated to the bar construction $|\B(\id,s\lie,-)|$ to the $\infty$-category of Postnikov-connective filtered $\F$-modules, which allows us to detect the pattern of higher differentials in \Cref{conjecture}.
\begin{remark}
The spectral sequence we study here is analogous to the bar spectral sequence 
$$E^2_{s,t}=\pi_s\pi_t\B(\id, \mathbb{E}^{\mathrm{nu}}_{\infty}\tens\Fp,\pi_*(A))\Rightarrow\pi_{s+t}\B(\id,\mathbb{E}^{\mathrm{nu}}_{\infty}\tens\Fp, A)$$ and its dual. The latter was used to identify operations on homotopy groups of spectral partition Lie algebras and
 mod $p$ TAQ cohomology operations of nonunital $\mathbb{E}_{\infty}$-$\Fp$-algebras in \cite{zhang}, which subsumes unpublished work of Kriz, Basterra and Mandell. The $E^2$-page of this spectral sequence is the Andr\'{e}-Quillen homology of $\mathrm{Poly}_R$-algebras, i.e., graded $\F$-modules equipped with Dyer-Lashof operations and a polynomial product that satisfying the Cartan formula. In contrast to \Cref{conjecture}, this spectral sequence collapses on the $E^2$-page. Heuristically, the phenomenon here arises from the nonadditivity of the free $\Lie$-algebra functor and the order of the factorization $Q^{\Lie_{\R}}_{\Mod_{\F}}=Q^{\Lie}_{\Mod_{\F}}\circ Q^{\Lie_{\R}}_{\Lie}$, which results in simplicial homotopy operations. Whereas the Dyer-Lashof operations are additive away from the bottom operations on even degree classes, so the factorization $Q^{\mathrm{Poly}_R}_{\Mod_{\Fp}}=Q^{\Mod_{R_{>0}}}_{\Mod_{\Fp}}\circ Q^{\mathrm{Poly}_R}_{\Mod_{R_{>0}}}$ does not introduce simplicial homotopy operations.
\end{remark}

\subsection{With coefficients}
Next, we take up a slightly more complicated case, where $M=\mathbb{R}^n$ with labels in an arbitrary spectrum $X$. Then $H_*(\free^{s\lie}(\Sigma^{n}X)^{M^+})\cong\Omega^n \free^{\Lie_{\R}}_{\Mod_{\F}}(\Sigma^n H_*(X))$ and the spectral sequence (\ref{knudsensseq}) becomes
\begin{equation}\label{coeffsseq}
    E^2_{s,t}=\AQ_{s,t}(\Omega^n \free^{\Lie_{\R}}_{\Mod_{\F}}(\Sigma^n H_*(X)))\Rightarrow H_{s+t}(\free^{\mathbb{E}_n} (X)).
\end{equation}
When $X=\mathbb{S}^k$, the $E^2$-page $\AQ_{s,t}(\Omega^n\free^{\Lie_{\R}}_{\Mod_{\F}}(\Sigma^{n+k}\F))$ is computed in \Cref{quillenhomologytrivial}.

Write $H_*(X)\cong \bigoplus_{k,l}\F\{x_{k,l}\}$, where $\{x_{k,l}\}_l$ is an $\F$-basis of $H_k(X)$ for each $k$. Then 
\begin{align*}
  \g=H_*(\Omega^n\free^{s\lie}(\Sigma^{n}H_*(X)))&\cong\F\{\iota_n\} \tens H_*(\free^{s\lie}(\Sigma^n H_*(X)))\\
  &\cong\F\{\iota_n\} \tens \Big(\bigoplus_{w\in W}\F\{\Q^{J} w, J\in \R(d(w))\}\Big)
\end{align*}
by \cite[Proposition 7.3]{omar}.
Here $\R(n)$ is the quotient of $\R$ by the relations $\Q^{j_1}\cdots \Q^{j_k}=0$ if $j_1<j_2+\cdots +j_k+n$, and $W$ is the set of Lyndon words on the set of letters $\{\sigma^n x_{k,l}\}_{k,l}$, which is a basis for the free $\tiLie$-algebra on generators $\{\sigma^n x_{k,l}\}_{k,l}$.

We define the \textit{degree} of a word $w\in W$ to be $d(w)=1+ \sum_{k,l}m_{k,l}(w)(n+k-1),$ where $m_{k,l}(w)$ counts the number of times the letter $\sigma^n x_{k,l}$ appears in $w$.  
Set $$\g_w=\F\{\iota_n\} \tens \F\{\Q^{J}w, J\in \R(n+|w|)\}.$$
Then $\g\simeq \bigoplus_{w\in W} \g_w$ with trivial brackets. 
 Note that this splitting is induced by an equivalence of $s\lie$-algebras in $\F$-module spectra
\begin{align*}
    \Big(\free^{s\mathcal{L}}(\Sigma^n X)\Big)^{(\mathbb{R}^n)^+}\tens \F
    \simeq& \mathbb{D}(S^n)\tens\free^{s\mathcal{L}}(\Sigma^n X\tens \F)\\
    \simeq& \mathbb{D}(S^n)\tens \free^{s\mathcal{L}}(\bigvee_{x_{k,l}} \Sigma^{n+k}\F)\\
    \simeq& \bigvee_{w\in W}\Big(\free^{s\mathcal{L}}(\Sigma^{d(w)}\F)\Big)^{(\mathbb{R}^n)^+},
\end{align*}
where the last step makes use of Corollary 5.13 in \cite{ab}. The equivalence above would  only be that of $\F$-module spectra if we did not kill the brackets by cotensoring with $(\mathbb{R}^n)^+$. Therefore we deduce the following:
\begin{proposition}\label{E2pageforRn}
    The spectral sequence $ E^2_{s,t}=\AQ_{s,t}(\Omega^n \free^{\Lie_{\R}}_{\Mod_{\F}}(\Sigma^n H_*(X)))\Rightarrow H_{s+t}(\free^{\mathbb{E}_n} (X))$ splits as 
\begin{align*}
    E^2_{s,t}\cong\bigoplus_{w\in W} \AQ_{s,t}(\g_w)\Rightarrow\bigoplus_{w\in W}\pi_{s+t}\B(\mathrm{id}, s\mathcal{L},\Omega^n\free^{s\mathcal{L}}(\Sigma^n\Sigma^{d(w)-n}\F)).
\end{align*}
\end{proposition}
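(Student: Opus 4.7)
The plan is to deduce the splitting directly from the equivalence of $s\lie$-algebras
\[
\left(\free^{s\lie}(\Sigma^n X)\right)^{(\mathbb{R}^n)^+} \tens \F \simeq \bigvee_{w \in W}\left(\free^{s\lie}(\Sigma^{d(w)}\F)\right)^{(\mathbb{R}^n)^+}
\]
established in the paragraph preceding the proposition, together with the naturality of the bar spectral sequence in \Cref{barsseq}. Write $L = \left(\free^{s\lie}(\Sigma^n X)\right)^{(\mathbb{R}^n)^+}$ and $L_w = \left(\free^{s\lie}(\Sigma^{d(w)}\F)\right)^{(\mathbb{R}^n)^+}$, so the equivalence reads $L \tens \F \simeq \bigvee_w L_w$.

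For the abutment, I would apply $|\B(\id, s\lie, -)|$ to this wedge decomposition. Since it is a left derived functor landing in $\F$-module spectra, and since the weighted structure forces each fixed weight slice of $\bigvee_w L_w$ to be a finite direct sum (so coproducts and products coincide there), the bar construction commutes with the decomposition. Taking $\pi_{s+t}$ yields
\[
\pi_{s+t}|\B(\id, s\lie, L \tens \F)| \cong \bigoplus_{w \in W} \pi_{s+t}|\B(\id, s\lie, L_w)|,
\]
and the left-hand side is identified with $\bigoplus_{k \geq 1} H_{s+t}(B_k(\mathbb{R}^n; X)) \cong H_{s+t}(\free^{\mathbb{E}_n}(X))$ via Knudsen's equivalence. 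This produces the right-hand side of the abutment splitting.

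For the $E^2$-page, I would pass to $H_*(-;\F)$ on both sides of the $s\lie$-algebra equivalence. Using \Cref{omar} to identify $H_*(\free^{s\lie}(\Sigma^{d(w)}\F))$ with $\free^{\Lie_{\R}}_{\Mod_{\F}}(\Sigma^{d(w)}\F)$, this produces an isomorphism $\g \cong \bigoplus_{w \in W} \g_w$ of $\Lie_{\R}$-algebras, in which each summand $\g_w$ has trivial Lie brackets since $\iota_n \cup \iota_n = 0$ in $\widetilde{H}^*(S^n)$ (cf.\ \Cref{bracketbhk}). Naturality of the bar spectral sequence in the spectral Lie algebra input then forces a term-by-term splitting of the whole spectral sequence, so in particular
\[
E^2_{s,t} = \AQ_{s,t}(\g) \cong \bigoplus_{w \in W} \AQ_{s,t}(\g_w),
\]
with the splitting compatible with differentials and the splitting of the abutment already obtained.

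The main subtlety is absorbed into the spectrum-level $s\lie$-algebra decomposition itself, whose proof uses the Lyndon-word basis of the free totally-isotropic Lie algebra together with \cite[Corollary 5.13]{ab}; everything else is a formal consequence of the naturality of $|\B(\id, s\lie, -)|$ and of the associated bar spectral sequence.
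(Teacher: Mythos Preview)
Your approach matches the paper's: the proposition is stated immediately after the spectrum-level $s\lie$-algebra equivalence with the phrase ``Therefore we deduce the following,'' and your proposal spells out exactly the intended argument---transport along the equivalence plus functoriality of the bar spectral sequence. One small imprecision worth tightening: the wedge $\bigvee_w L_w$ is the \emph{product} in $s\lie$-algebras (trivial cross-brackets), not the coproduct, so the splitting of $|\B(\id,s\lie,-)|$ and of $\AQ$ over it is really using that TQ and Quillen homology preserve finite products (equivalently, that they are excisive/linear), not that they preserve coproducts as left adjoints; the paper leaves this implicit as well.
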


\begin{remark}
    The canonical  map of spectral Lie algebras $$\Omega^n\free^{s\mathcal{L}}(\Sigma^n \mathbb{S}^k)\rightarrow \Omega^\infty\free^{s\mathcal{L}}(\Sigma^\infty \mathbb{S}^k)$$ via stabilization induces an embedding of the $E^2$-pages $$\AQ(\Omega^n\free^{\Mod_{\R}}_{\Mod_{\F}}(\Sigma^{n+k}\F))\rightarrow\AQ(\Sigma^k\F)$$ by \Cref{stablization} and \Cref{quillenhomologytrivial}. We expect that the higher differentials in the target (\Cref{conjecture}) pull back to higher differentials in the source. Indeed, combinatorially this will yield the computation of the free $\mathbb E_n$-algebra on a single generator. If $H_*(X)$ has multiple generators, then the splitting of the spectral sequence above via Lyndon words corresponds precisely the Browder bracket on the free $\mathbb E_n$-algebra on those generators, cf. \cite[III]{CLM}. 
\end{remark}
\section{Upper bounds and low weight computations}\label{section5}

For a general parallelizable manifold $M$ of dimension $n$, the $\Lie_{\R}$-algebra $$\g=\widetilde{H}^*(M^+)\tens\free^{\Lie_{\R}}_{\Mod_{\F}}(\Sigma^n H_*(X))$$ has non trivial $\Lie$-brackets and the precise image of the comparison map $\varphi_*$ in \Cref{comparison} becomes much harder to pin down. Nonetheless, \Cref{upperbound} and  \Cref{cor:factortiLierg} allow us to obtain a formula for an upper bound of $\pi_{*,*}\B(\id,\Lie_{\R},\g)$ by
$$\pi_{*,*}\B(\id,\tiLie_{\R},\tilde{\g})\cong \pi_{*,*}(\mathrm{CE}(\AR(\tilde{\g})))$$ that is an equivalence in weight less than four. Here
$\tilde\g=\widetilde{H}^*(M^+)\tens \free^{\tiLie_{\R}}(H_*(X))$ is the associated $\tiLie_{\R}$-algebra, where $\widetilde{H}^*(M^+)$ is equipped with the $\tiLie$-bracket coming from the associated $\tiLie$-algebra of the $\Lie$-algebra $H^*(M^+)$ with its usual cup product, cf. \Cref{tibracket}. 
In particular, it follows from \Cref{smallweightisom} that in weight less than four, the two homotopy groups are isomorphic.% We will see that the Knudsen spectral sequence collapses on the $E^2$-page in weight at most four, so this allows us to give explicit computations of $H_*(B_k(M;X))$ for $k\leq 4$.

\subsection{General upper bounds}
We will see that  $\pi_{*,*}(\mathrm{CE}(\AR(\tilde\g)))$ admits a description in terms of the $\tiLie$-algebra homology of $\tilde\g$. The key observation is that for $\tilde\g=\widetilde{H}^*(M^+)\tens \free^{\tiLie_{\R}}(H_*(X))$, $\AR(\tilde\g)$ has trivial $\tiLie$-structure away from simplicial degree 0 and its degeneracies, cf. \Cref{AR}, and the $\tiLie$-bracket on $\tilde\g$ vanishes on elements that involve $\Q^i$ operations. 

\begin{definition}
For a $\tiLie$-algebra $\g$, we say that its $\tiLie$-structure is \textit{supported entirely} by a  sub-$\tiLie$-algebra $\g'$ if 
the $\tiLie$-algebra $\g$ is isomorphic to the product $\tiLie$-algebra $N\oplus \g'$, where the $\tiLie$ bracket vanishes on the complement $N\subset \g$.
% All elements in $\g'$ are in the cokernel of the $\R$-module structure map $\free^{\R}_{\Mod_{\F}}(\g)\rightarrow \g$.
\end{definition}

\begin{lemma}\label{modifyCE}
Let $\tilde\g=L\tens \free^{\tiLie_{\R}}_{\Mod_{\F}}(V)$ be a $\tiLie_{\R}$-algebra, where $L$ is a non-unital graded commutative algebra over $\F$ and the $\tiLie$-structure on $\tilde\g$ is the usual one on the tensor product. Then $$\pi_{*,*}(\mathrm{CE}(\AR(\tilde\g)))\cong\Lambda \{\gamma_I(\alpha),\alpha\in A\}\tens H^{\tiLie}_{*,*}(\tilde\g),$$ where $\alpha\in A$ is an element of an $\F$-basis for $\pi_{\geq 1,*}(\AR(\tilde\g))$ with simplicial degree $s(\alpha)$, and $I$ is $\gamma$-admissible with $e(I)\leq s(\alpha)$.
\end{lemma}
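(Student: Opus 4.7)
The plan is to exploit the key observation, immediate from \Cref{AR}, that the $\tiLie$-bracket on the simplicial $\tiLie$-algebra $\AR(\tilde\g)$ is supported only on the fully degenerate elements $1|\cdots|1|x$, $x \in \tilde\g$. Consequently, on the normalized chain complex of $\AR(\tilde\g)$, the CE differential $\delta$ vanishes in every simplicial degree $\geq 1$, while in simplicial degree $0$ it agrees with the CE differential of $\tilde\g$. I would package this by observing that the inclusion $\iota \colon \tilde\g_{\mathrm{const}} \hookrightarrow \AR(\tilde\g)$ via iterated degeneracies is a map of simplicial $\tiLie$-algebras carrying all the nontrivial bracket structure.

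First I would filter the total complex of the bicomplex $\mathrm{CE}(\AR(\tilde\g)) = (\Lambda^{\bullet}(\AR(\tilde\g)), \delta + d)$ by the exterior-algebra degree. Since $\delta$ strictly decreases this degree while the simplicial differential $d$ preserves it, the $E_1$-page of the resulting spectral sequence is $\pi_{*,*}(\Lambda^{\bullet}(\AR(\tilde\g)))$, computed via \Cref{CEbhk} together with \Cref{exteriorhomotopy} as the shifted graded exterior algebra on generators $\gamma_I(\beta)$, where $\beta$ runs over an $\F$-basis of $\pi_{*,*}(\AR(\tilde\g))$ and $I$ is $\gamma$-admissible with $e(I) \leq s(\beta)$.

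Next I would identify the induced $d_1$-differential, which is $\delta$. By the structural observation, $d_1 \gamma_I(\alpha) = 0$ for every $\alpha$ in an $\F$-basis $A$ of $\pi_{\geq 1, *}(\AR(\tilde\g))$, so these form a sub-exterior-algebra of permanent cycles. The remaining generators (those arising from $\pi_{0,*}$) together with the product $\tiLie_{\R}$-structure on $\tilde\g$ package into a sub-DG-algebra whose homology is $H^{\tiLie}_{*,*}(\tilde\g)$. Multiplicativity of the spectral sequence then yields
\[
E_2 \cong H^{\tiLie}_{*,*}(\tilde\g) \tens \Lambda\{\gamma_I(\alpha) : \alpha \in A,\ I\ \gamma\text{-admissible with } e(I) \leq s(\alpha)\},
\]
and a sparsity argument shows the spectral sequence collapses at $E_2$, giving the stated isomorphism.

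The hard part will be correctly identifying the simplicial-degree-$0$ sub-DG-algebra so that its homology is $H^{\tiLie}_{*,*}(\tilde\g)$ rather than $H^{\tiLie}_{*,*}$ of the $\pi_0$-quotient $\tilde\g/(\R_{>0}\cdot\tilde\g) = L\tens\tiLie(V)$, which is what a naive analysis of $d_1$ produces and which misses the extra exterior factor $\Lambda^{\bullet}(L\tens\R_{>0}\tiLie(V))$ inherent in $H^{\tiLie}_{*,*}(\tilde\g)$. Reconciling these two requires tracing cycles in $\Lambda^{\bullet}(\AR(\tilde\g))$ coming from the $\A_{\R}$-bar resolution in higher simplicial degrees and matching them against the missing exterior factor via the product structure of $\tilde\g$; concretely, this amounts to combining the inclusion $\iota$ with the computation of $\pi_{\geq 1, *}(\AR(\tilde\g))$ so that the generators $\gamma_I(\alpha)$ recover precisely the contributions from $L\tens \R_{>0}\tiLie(V)$ inside the claimed $H^{\tiLie}_{*,*}(\tilde\g)$ factor.
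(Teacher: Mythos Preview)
Your approach via the exterior-degree filtration spectral sequence is genuinely different from the paper's, and while the key structural observation you make is the same one the paper uses, the paper exploits it more directly and avoids the difficulties you flag.

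The paper does not run a spectral sequence at all. Instead it observes that the inclusion $\iota$ you describe is part of a \emph{splitting} of simplicial $\tiLie$-algebras: writing $\g'_0 = L\tens\free^{\tiLie}_{\Mod_{\F}}(V)$ for the sub-$\tiLie$-algebra carrying all nontrivial brackets, one has $\AR(\tilde\g)\cong T_\bullet\oplus\g'_\bullet$, where $\g'_\bullet$ is the constant simplicial object on $\g'_0$ (sitting inside $\AR$ via iterated degeneracies) and $T_\bullet$ is a levelwise complement with \emph{trivial} $\tiLie$-structure. Because CE of a product of $\tiLie$-algebras is the tensor product of CE complexes, this yields immediately $\mathrm{CE}(\AR(\tilde\g))\cong\mathrm{CE}(T_\bullet)\tens\mathrm{CE}(\g'_\bullet)$. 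The first factor is computed by \Cref{exteriorhomotopy} (since $T_\bullet$ is abelian), the second is just $H^{\tiLie}_{*,*}(\g'_0)$ (since $\g'_\bullet$ is constant). No collapse argument is needed.

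By contrast, your ``sparsity argument'' for $E_2$-collapse is not spelled out and is not obviously available: once you pass to $E_1$ you have lost the levelwise product structure, and there is no evident grading forcing the higher $d_r$ to vanish. The cleanest way to prove collapse would be to invoke exactly the splitting above, at which point the spectral sequence becomes superfluous.

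Your ``hard part'' paragraph identifies a genuine tension, but it is with the lemma's \emph{statement}, not with the argument. Both your computation and the paper's splitting naturally produce $H^{\tiLie}_{*,*}(\g'_0)$, not $H^{\tiLie}_{*,*}(\tilde\g)$. Since $\tilde\g\cong\g'_0\oplus T_0$ as $\tiLie$-algebras with $T_0=L\tens\R_{>0}\tiLie(V)$ abelian, one has $H^{\tiLie}_{*,*}(\tilde\g)\cong H^{\tiLie}_{*,*}(\g'_0)\tens\Lambda^\bullet(T_0)$; but $\pi_0(T_\bullet)=0$ (every element of $T_0$ is hit by $d_0-d_1$ from simplicial level $1$), so the extra $\Lambda^\bullet(T_0)$ is not present in $\pi_{*,*}(\mathrm{CE}(\AR(\tilde\g)))$. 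The paper's closing remark ``noting that $H^{\tiLie}_{*,*}(\tilde\g)\cong H^{\tiLie}_{*,*}(T_0)\tens H^{\tiLie}_{*,*}(\g'_0)$'' does not resolve this; the formula as stated appears to overcount by that factor. You should not try to manufacture this missing piece from the $\gamma_I(\alpha)$ generators---it is simply not there.
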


\begin{proof}
Since brackets of operations are zero, the $\tiLie$-algebra $\tilde\g$ is supported entirely by the sub-$\tiLie$-algebra $\g_0'=L\tens \free^{\tiLie}_{\Mod_{\F}}(V)$. Furthermore, for all $m\geq 1$, the $\tiLie$-algebra $\mathrm{AR}_m(\tilde\g)$ is supported entirely by the degeneracies coming from $\g'_0$ by \Cref{AR}. Hence each simplicial level  
$\mathrm{AR}_m(\tilde\g)$ is isomorphic to the product $\tiLie$-algebra $T_m\oplus \g'_m$, where $\g'_m$ is the sub-$\tiLie_{\F}$-algebra consisting of degeneracies of $\g'_0$ and $T_m$ a trivial $\tiLie$-algebra. Since the splittings respect the simplicial $\tiLie$-algebra structure of $\AR(\tilde\g)$, we deduce that  $\AR(\tilde\g)\cong T_\bullet\oplus \g'_\bullet$ as simplicial $\tiLie_{\F}$-algebras. This induces a splitting of chain complexes $$\mathrm{CE}(\AR(\tilde\g))\cong \mathrm{CE}(T_\bullet)\tens \mathrm{CE}(\g'_\bullet),$$ where $T_\bullet$ is a trivial simplicial $\tiLie$-algebra and $\g'_\bullet$ the constant simplicial object on $\g_0'$. The lemma then follows from Theorem \ref{exteriorhomotopy}, noting that $H^{\tiLie}_{*,*}(\tilde\g)\cong H^{\tiLie}_{*,*}(T_0)\tens H^{\tiLie}_{*,*}(\g'_0)$.\end{proof}

It remains to compute $\pi_{*,*}(\AR(\tilde\g))$  for $\tilde\g=\widetilde{H}^*(M^+)\tens \free^{\tiLie_{\R}}(H_*(X))$. Since $\g$ and $\tilde\g$ are isomorphic as $\R$-modules (cf. \Cref{levelwiseisom}), we will not distinguish the two. Recall from \Cref{steenrodtwistQ} that the $\R$-module structure on $\g$ is twisted by the Steenrod operations in the sense that
$$\Q^j(y\tens \alpha)=\sum_{0\leq s\leq n} Sq^{j+s}(y)\tens \Q^s(\alpha).$$ 

\begin{notation}\label{notationbasisforV}
 Let $H\cup\{z\}$ be an $\F$-basis of the cohomology ring $H^*(M^+)$, where $z$ corresponds to the added point in the one-point compactification and $H$ is a basis for $\widetilde{H}^*(M^+)$. For $y\in H$, denote by $|y|$ the cohomological degree of $y$.
 
Let $\widetilde{B}=\{x_a\}_a$ be a totally ordered basis for $V=H_*(X)$ and $B=\{\sigma^n x_a\}_a$ with the induced ordering, where $n$ is the dimension of $M$. Denote by $W$ the set of basic products on the set $B$. Then 
\begin{align*}
  \g&=\widetilde{H}^*(M^+)\tens H_*(\free^{s\lie}(\Sigma^{n}X))\cong\bigoplus_{w\in W, y\in H}\F\{y\} \tens \F\{\Q^{J} w, J\in \R(|w|)\}.
\end{align*} 
\end{notation}
%Fix a $\tiLie$-bracket $\langle ,\rangle$ as in Construction \ref{tibracket}. Explicitly, we can choose $\langle ,\rangle$ to be given on basis elements by $$\langle y_1\tens w _1, y_2 \tens w _2\rangle= \langle y_1, y_2\rangle \tens \langle w_1, w_2\rangle$$ for  $y_1\neq y_2, w_1\neq w_2$ and 0 otherwise. 
\begin{proposition}\label{filtersteenrod}
    The bigraded homotopy group $\pi_{*,*}(\AR(\tilde\g))=\pi_{*,*}(\AR(\g))$ is isomorphic to $\pi_{*,*}(\AR(\g_{\mathrm{triv}}))$, where the untwisted $\R$-module $\g_{\mathrm{triv}}$ has the same underlying $\F$-module as $\g$ and the $\R$-module structure is given by $\Q^j(y\tens x)=y \tens \Q^j(x)$ for all $j$. 
\end{proposition}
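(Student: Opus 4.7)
The plan is to introduce a filtration on $\g$ whose associated graded is precisely $\g_{\mathrm{triv}}$, and to argue that the resulting spectral sequence on $\AR(\g)$ collapses at $E^1$. First, I would define the decreasing filtration
$$F^p\g := \bigoplus_{y \in H,\, |y| \geq p} \F\{y\} \tens \free^{\tiLie_{\R}}_{\Mod_{\F}}(V)$$
by cohomological degree of the $\widetilde{H}^*(M^+)$-factor. Since $Sq^{i-j}$ vanishes on $\widetilde{H}^*(M^+)$ for $i<j$ and lands in $\widetilde{H}^{|y|+(i-j)}(M^+)$ with $i-j\geq 0$ otherwise, Proposition \ref{steenrodtwistQ} immediately shows that $F^p\g$ is an $\R$-submodule. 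On the associated graded $\mathrm{gr}^p\g = F^p/F^{p+1}$, only the leading $i=j$ contribution $Sq^0(y)\tens \Q^j(x) = y\tens \Q^j(x)$ survives, so $\mathrm{gr}\g \cong \g_{\mathrm{triv}}$ as $\R$-modules.

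Next, the filtration extends to $\AR(\g) = \B(\id, \A_{\R}, \g)$: in simplicial degree $n$, the inner face maps $d_0,\dots,d_{n-1}$ only use the monad composition of $\A_{\R}$ and therefore trivially preserve filtration, while the rightmost face map $d_n$ applies the $\R$-action on $\g$, which preserves $F^p\g$ by the first step. The associated spectral sequence is
$$E^1_{p,s,t} = \pi_{s,t}\AR(\mathrm{gr}^p\g) \;\Rightarrow\; \pi_{s,t}\AR(\g).$$
Since $\AR$ is additive in its $\R$-module argument and each $\mathrm{gr}^p\g = \widetilde{H}^p(M^+)\tens \free^{\tiLie_{\R}}_{\Mod_{\F}}(V)$ is a tensor product of a trivial $\R$-module with an $\R$-module, the $E^1$-page totals to $\widetilde{H}^*(M^+) \tens \pi_{*,*}\AR(\free^{\tiLie_{\R}}_{\Mod_{\F}}(V)) \cong \pi_{*,*}\AR(\g_{\mathrm{triv}})$.

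The main obstacle is showing this spectral sequence collapses at $E^1$. The approach is a homological-perturbation-lemma argument: the bar differential of $\AR(\g)$ equals the bar differential of $\AR(\g_{\mathrm{triv}})$ (on the same underlying chain complex) plus the Steenrod perturbation $t$ coming from the $Sq^k$ with $k\geq 1$ contributions in $d_n$, each of which strictly raises the cohomological filtration. Picking a strong deformation retract from $\AR(\g_{\mathrm{triv}})$ onto its homology $\pi_{*,*}\AR(\g_{\mathrm{triv}})$ with contracting homotopy $h$, one transfers $t$ to a perturbation $\pi t(1-ht)^{-1}i = \sum_{k\geq 0}\pi t(ht)^k i$ on $\pi_{*,*}\AR(\g_{\mathrm{triv}})$ whose homology computes $\pi_{*,*}\AR(\g)$. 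Since $\widetilde{H}^*(M^+)$ is bounded in cohomological degree, the sum is finite. The expectation, to be verified by a term-by-term combinatorial check, is that every summand vanishes for internal-degree reasons analogous to the degree-counting check that fixes the $i = j$ term in $\Q^j(y\tens x)$, completing the proof.

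The hardest step will clearly be the combinatorial verification that the transferred perturbation vanishes; my fallback plan is to avoid this by instead constructing an explicit chain-level quasi-isomorphism $\sigma:\AR(\g_{\mathrm{triv}}) \xrightarrow{\simeq} \AR(\g)$ by iterated correction, adjusting the identity map of underlying $\F$-modules by adding lower-order terms indexed by $Sq^k$ with $k\geq 1$ until the result is a chain map; finiteness of the filtration guarantees that this inductive process terminates.
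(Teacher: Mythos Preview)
Your setup is exactly the paper's: filter by cohomological degree of the $\widetilde{H}^*(M^+)$ factor, identify the associated graded with $\g_{\mathrm{triv}}$, and run the induced spectral sequence on $\AR$. The paper then carries out precisely your fallback plan: it shows that every $E^1$-basis element $\Q^{j_1}\cdots\Q^{j_m}(y\tens w)$ lifts to a genuine cycle in $\AR(\g)$ by descending induction on $|y|$, starting from the top cohomological degree (where only $Sq^0$ acts, so no correction is needed) and noting that the Steenrod-error terms in $\partial\beta$ all lie in strictly higher filtration and are therefore boundaries by the inductive hypothesis; finiteness of $\widetilde{H}^*(M^+)$ terminates the process. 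One caveat on your primary plan: the expectation that the HPL-transferred perturbation vanishes ``for internal-degree reasons analogous to the $i=j$ check'' is not the right mechanism --- there is no grading obstruction to nonzero higher differentials, and showing the transferred terms $\pi t(ht)^k i$ vanish amounts to exactly the same inductive boundary argument. So HPL would repackage the iterated-correction proof rather than shortcut it; going directly to your fallback is what the paper does.
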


\begin{proof}
We make use of a spectral sequence to filter away the twisting by the action of the Steenrod operations. We abuse notation here and denote again by $\AR(\g)$ the associated chain complex of $\AR(\g)$. Filter $\g$ in terms of  decreasing cohomological degree of $\widetilde{H}^{*}(M^+)$, so we have $$F_{-p}(\g)=\widetilde{H}^{\geq {p}}(M^+)\tens \free^{\Lie_{\R}}_{\Mod_{\F}}(V)\cong\bigoplus_{w\in W, y\in H, |y|\geq p}\F\{y\tens \Q^{J}(w), J\in \R(|w|)\}$$ with associated graded pieces given by $$G_{-p}(\g)=F_{-p}(\g)/F_{-p-1}(\g)\cong \bigoplus_{w\in W, y\in H, |y|= p}\F\{y\tens \Q^{J} (w), J\in \R(|w|)\}.$$ Since action by Steenrod operations does not decrease cohomological degree, the induced filtration $$F_{-p}(\AR(\g)):=\AR(F_{-p}(\g))$$ makes $\AR(\g)$ a filtered chain complex. The associated graded pieces are
$$G_{-p}(\AR(\g))=\AR(G_{-p}(\g))=\bigoplus_{w\in W, y\in H, |y|= p}\AR(\F\{y\tens \Q^{J} (w), J\in \R(|w|)\})$$  and the induced differential preserves direct summands.

Using the case $M=\mathbb{R}^n$ in \Cref{E2pageforRn}, we deduce that 
\begin{align*}
    E^1_{-p,q}=H_{-p+q}(G_p(\AR(\g)))&\cong\bigoplus_{w\in W, y\in H, |y|=p}\pi_*\Big(\AR\big(\F\{y\tens\Q^{J}( w), J\in \R(|w|)\}\big)\Big)\\
    &\cong \bigoplus_{w\in W, y\in H, |y|=p}\F\{\Q^{j_1}\cdots\Q^{j_m}(y\tens w), (j_1,\ldots, j_m)\in \R(p, |w|)\},
\end{align*}
where $\R(p, |w|)$ is the set of sequences $(j_1,\ldots, j_m)$ such that
\begin{enumerate}
    \item $j_l\leq 2 j_{l+1}$ for $1\leq l<m$ and $|w|-p\leq j_m<|w|$;
    \item If $m\geq 2$ then $j_l\geq j_{l+1}+\cdots +j_m+|w|-(m-l)$ for $2\leq l\leq m-1$ and $j_1> j_2+\cdots +j_m+|w|-(m-1)$.

\end{enumerate}
We claim that every class on the $E^1$-page survives to a class on the $E^\infty$-page by induction on $\widetilde{H}^*(M^+)$ along decreasing cohomological degree.

For $y\in \widetilde{H}^n(M^+)\in F_{-n}(\g)$ a top cohomology class, there is no nonzero action by a Steenrod operation on $y$ other than $Sq^0$, so the differential on $\beta$ in $\AR(\g)$ is the same as the differential in $G_{-n}(\AR(\g))$, i.e. $\beta$ survives to a nonzero class on the $E^\infty$-page. 

Suppose that in $F_{-p-1}(\AR(\g))=\AR(F_{-p-1}(\g))$, any basis element $\beta'=\Q^{j'_1}\cdots\Q^{j'_m}(y'\tens w')$ of the $E^1$-page is a permanent cycle and they span all permanent cycles in $F_{-p-1}(\AR(\g))$. Let $[\beta]=\Q^{j_1}\cdots\Q^{j_m}(y\tens w)$ be a basis element on the $E^1$-page, with $y\in\widetilde{H}^p(M^+)$.  A cycle representing this class in $\AR(G_{-p}(\g))$ is a finite sum $$\beta=\Q^{j_1}|\cdots|\Q^{j_m}|(y\tens w)+\sum_l \Q^{l_1}|\cdots|\Q^{l_m}|(y\tens w)$$  obtained by cycle completion via Behrens' relations in the sense of \Cref{cyclecompletion}. Note that $l_m\leq j_m<|w|$ for all $l$. Let $d_m$ be the rightmost face map. Then in $\AR(\g)$ 
\begin{align*}
   \partial \beta&=\partial\Big (\Q^{j_1}|\cdots|\Q^{j_m}|(y\tens w)+\sum_l \Q^{l_1}|\cdots|\Q^{l_m}|(y\tens w)\Big)\\&=0+d_m\Big(\Q^{j_1}|\cdots|\Q^{j_m}|(y\tens w)+\sum_l \Q^{l_1}|\cdots|\Q^{l_m}|(y\tens w)\Big)\\&=\sum_{s\geq 0}\Q^{j_1}|\cdots|\Q^{j_{m-1}}|Sq^s(y)\tens \Q^{j_m+s} (w)+\sum_l\sum_{s\geq 0}\Q^{l_1}|\cdots|Q^{l_{m-1}}|Sq^s(y)\tens \Q^{l_m+s} (w). 
\end{align*}

Note that the sum of these $\theta_l=\Q^{l_1}|\cdots|Q^{l_{m-1}}|Sq^s(y)\tens \Q^{l_m+s} (w)$ or $Q^{j_1}|\cdots|\Q^{j_{m-1}}|Sq^s(y)\tens \Q^{j_m+s} (w)$ over $s\geq0$ is a boundary in $\AR(\g)$: 
If $l_m+s<|w|$ then $\theta_l=0$.
If $l_m+s\geq |w|$ or $j_m+s\geq |w|$, then $s\geq 1$, since $l_m\leq j_m<|w|$, so $\theta_l\in F_{-p-1}(\AR(\g))$. By the inductive hypothesis, the sum of such $\theta_l$ is not a nonzero cycle on the $E^\infty$-page and thus the boundary of a finite sum of classes in $F_{-p-1}(\AR(\g))$ of the form $\Q^{j'_1}|\cdots|\Q^{j'_m}|(y'\tens w')$ with $|y'|\geq p+s>p$. Denote by $\xi$  this finite sum, so $ \partial(\beta+\xi)=0$ in $\AR(\g)$. Note that $\xi$ is not a boundary because it is maximally nondegenerate and $\xi\neq \beta$ since $\beta$ is not in $F_{-p-1}(\AR(\g))$.
Hence $\beta+ \xi$ is a cycle in $\AR(\g)$ corresponding to the basis element $\beta=\Q^{j_1}\cdots\Q^{j_m}(y\tens w)$ on the $E^1$-page. Therefore the dimension of the $E^1$-page is at most that of the $E^\infty$-page, so no differential can happen in the spectral sequence.
\end{proof}

Combing \Cref{modifyCE}, \Cref{filtersteenrod} and \Cref{smallweightisom}, we deduce the following general upper bound and low weight computation of the $E^2$-page of the Knudsen spectral sequence.

\begin{theorem}\label{E2upperbound}
Let $M$ be a parallelizable manifold of dimension $n$ and $X$ any spectrum. Let $\g$ denote the $\Lie_{\R}$-algebra $\widetilde{H}^*(M^+)\tens\free^{\Lie_{\R}}_{\Mod_{\F}}(\Sigma^n H_*(X))$ with $\F$-basis $B$, and $\tilde\g$ the associated $\tiLie_{\R}$-algebra.
 An upper bound for the $E^2$-page of the weighted spectral sequence
 \begin{equation}\label{sseq}
     E^2_{s,t}=\AQ_{s,t}(\g)\Rightarrow \bigoplus_{k\geq 1} H_{s+t}(B_k(M;X))
 \end{equation}
 is given by
$$\pi_{*,*}(\CE(\AR(\tilde{\g})))\cong\Lambda\{\gamma_I\Q_J(y\tens w), y\tens w\in H\tens B\}\tens H^{\tiLie}_{*,*}(\tilde\g),$$
where $\gamma_I\Q_J(y\tens w)$ satisfies the conditions that
\begin{enumerate}
    \item $J=(j_1,\ldots,j_m)$ with $m\geq 1$, $0\leq j_l\leq j_{l+1}+1$ for $1\leq l<m$, and $0\leq j_m<|y|$
    \item $I$ is $\gamma$-admissible with $e(I)\leq m$. 
\end{enumerate}
Furthermore, in weight less than four equality is achieved.
\end{theorem}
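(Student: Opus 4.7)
The plan is to chain together four ingredients set up earlier in the section: the May spectral sequence of \Cref{upperbound}, the factorization lemma \Cref{factortiLierg}, the shifted Chevalley-Eilenberg identification \Cref{CEbhk}, and the product-algebra splitting of \Cref{modifyCE}, followed by an untwisting argument via \Cref{filtersteenrod}.

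The first step would be to apply \Cref{upperbound} to obtain an upper bound for $\pi_{*,*}\B(\id,\Lie_{\R},\g)$ by $\pi_{*,*}\B(\id,\tiLie_{\R},\tilde\g)$, then invoke \Cref{factortiLierg} and \Cref{CEbhk} successively to rewrite the latter as $\pi_{*,*}(\CE(\AR(\tilde\g)))$. Next, since brackets vanish on elements containing $\Q^i$-operations, $\tilde\g$ is of the product form $L\tens\free^{\tiLie_{\R}}_{\Mod_{\F}}(V)$ with $L=\widetilde{H}^*(M^+)$ (equipped with the $\tiLie$-structure induced from its cup product via \Cref{tibracket}) and $V=\Sigma^n H_*(X)$, so \Cref{modifyCE} yields the exterior-algebra-times-$\tiLie$-homology form, with exterior generators indexed by a basis of $\pi_{\geq 1,*}(\AR(\tilde\g))$ together with admissible $\gamma$-sequences.

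The bulk of the work is identifying an explicit basis for $\pi_{\geq 1,*}(\AR(\tilde\g))$. Here I would first use \Cref{filtersteenrod} to replace the Steenrod-twisted $\R$-action with the diagonal untwisted one, causing the bar construction to split as a direct sum of unstable Tor groups $\mathrm{UnTor}^{\R}_{*,*}(\F,\F\{y\tens w\})$ indexed by $y\tens w\in H\tens B$. The basis of each summand should follow from Priddy's PBW basis for the Koszul algebra $\R$ exactly as in \Cref{basisfortiLieAR}, with the unstability bound $0\leq j_m<|y|$ arising by translating the condition $\Q^j(x)=0$ for $j<|x|$ into lower indexing after accounting for the internal degree shift introduced by the cohomology class $y$. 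Finally, the equality claim in weights below four follows directly from \Cref{smallweightisom}, since every other isomorphism in the chain above is an equality regardless of weight.

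The main obstacle I anticipate is the compatibility between the two reductions: the Steenrod-twisting filtration of \Cref{filtersteenrod} must be verified to interact cleanly with the simplicial $\tiLie$-structure on $\AR(\tilde\g)$ used in \Cref{modifyCE}. This should hold because the $\tiLie$-brackets on $\AR(\tilde\g)$ live only in simplicial degree zero and its degeneracies, so the cohomological-degree filtration of \Cref{filtersteenrod} preserves the simplicial $\tiLie$-structure and hence induces a filtration on the double complex $\CE(\AR(\tilde\g))$; after passing to the associated graded, the computation reduces to the universal case $M=\mathbb{R}^n$ already handled in \Cref{E2pageforRn}, so no unexpected differentials appear.
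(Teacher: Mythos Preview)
Your proposal is correct and follows essentially the same route as the paper, which states the theorem as an immediate consequence of combining \Cref{upperbound}, \Cref{factortiLierg}, \Cref{modifyCE}, \Cref{filtersteenrod}, and \Cref{smallweightisom}, exactly the ingredients you list in the same order. Your compatibility concern in the final paragraph is slightly over-cautious: once \Cref{modifyCE} has split off the $\tiLie$-structure into the constant simplicial piece $\g'_\bullet$, \Cref{filtersteenrod} is only invoked to compute $\pi_{\geq 1,*}(\AR(\tilde\g))$ as a bigraded $\F$-module (recall $\g$ and $\tilde\g$ share the same underlying $\R$-module by \Cref{levelwiseisom}), so no interaction between the cohomological-degree filtration and the simplicial $\tiLie$-structure actually needs to be checked.
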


\subsection{Low weight computations}
\Cref{E2upperbound} allows us to deduce the degeneration of the spectral sequence at weight two and three using sparsity arguments.  
Denote by $\mathrm{wt}_k(M)$ the weight $k$ part of a weighted (bi)graded $\F$-module $M$ and set $E^r(k)=\wt_k(E^r)$. 
\begin{corollary}\label{weight2}
Let $\g,\tilde\g$ be the same as in \Cref{E2upperbound} and $B$, $H$ bases given in \Cref{notationbasisforV}.
The weight two part of the spectral sequence (\ref{sseq})
$$E^2_{s,t}(2)=\wt_2(\AQ_{s,t}(\g))\Rightarrow  H_{s+t}(B_2(M;X))$$
collapses on the $E^2$-page, and hence
$$E^\infty(2)\cong E^2(2)\cong \wt_2(H^{\tiLie}_{*,*}(\tilde\g))\oplus \bigoplus_{x\in B,y\in H}\{\Q_j(y \tens x), 0\leq j<|y|\}.$$
\end{corollary}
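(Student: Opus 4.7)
The plan combines Theorem~\ref{E2upperbound} with a sparsity argument in the simplicial grading.

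First, I would identify $E^2(2)$ via Theorem~\ref{E2upperbound}. Because $2<4$, the upper bound there is attained, so
\[
E^2_{*,*}(2)\cong \wt_2\bigl(\Lambda\{\gamma_I\Q_J(y\tens w)\}\tens H^{\tiLie}_{*,*}(\tilde\g)\bigr).
\]
Every exterior generator $\gamma_I\Q_J(y\tens w)$ of the $\Lambda$-factor has $|J|\geq 1$, and each operation $\Q_{j_l}$ doubles weight, so the exterior algebra has no weight one generators. The weight two contributions therefore decompose as (i) the single generators $\Q_j(y\tens x)$ coming from $I=\emptyset$, $J=(j)$, $w=x\in B$ with $0\leq j<|y|$, paired with the unit of $H^{\tiLie}_{*,*}(\tilde\g)$, and (ii) the weight two part of $H^{\tiLie}_{*,*}(\tilde\g)$ paired with the unit of the exterior algebra. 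This matches the claimed description of $E^2(2)$.

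Second, I would rule out higher differentials by a sparsity argument. Using the explicit description above, one verifies that $E^2(2)$ is concentrated in simplicial degrees $s\in\{1,2\}$: the generators $\Q_j(y\tens x)$ correspond to classes in $\pi_1(\AR(\tilde\g))$, and the weight two part of $H^{\tiLie}_{*,*}(\tilde\g)$ is supported in Chevalley--Eilenberg degrees one (from $\Lambda^1$ of weight two classes) and two (from $\Lambda^2$ of weight one classes). Furthermore $E^r_{s,*}(2)=0$ for $s\leq 0$: negative $s$ is empty by convention, and at $s=0$ the total degree in the double complex $\CE(\AR(\tilde\g))$ forces both the $\CE$- and $\AR$-degrees to vanish, giving only the unit $\Lambda^0=\F$ at weight zero. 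The bar spectral sequence differential $d^r:E^r_{s,t}\to E^r_{s-r,t+r-1}$ strictly decreases $s$ by $r\geq 2$, so classes at $s=1$ hit $s\leq -1$, classes at $s=2$ hit $s\leq 0$, and no source at $s\geq 3$ exists in weight two. Hence all higher differentials vanish.

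The delicate point is bookkeeping the simplicial degrees precisely. This entails tracking the shifts in Lemmas~\ref{factortiLierg} and~\ref{modifyCE} and the correspondence (\ref{shiftCE}) between $\B(\id,\tiLie,-)$ and the reduced Chevalley--Eilenberg complex; once this is done the sparsity argument closes out both the collapse and the identification of $E^\infty(2)$.
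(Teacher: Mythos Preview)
Your approach matches the paper's: invoke the low-weight equality from Theorem~\ref{E2upperbound} to identify $E^2(2)$, then use sparsity in the simplicial grading to rule out higher differentials. The identification of the weight two classes as $\Q_j(y\tens x)$ together with $\wt_2(H^{\tiLie}_{*,*}(\tilde\g))$ is exactly what the paper does.

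There is, however, an off-by-one error in your degree bookkeeping. In the bar construction $\B(\id,\Lie_{\R},\g)$, the $0$th simplicial level is $\g$ itself, so weight two elements of $\tilde\g$ such as $y\tens\langle x_1,x_2\rangle$ live in simplicial degree $0$, not $1$. Correspondingly, the shifted exterior product in Definition~\ref{CEdef} places $\Lambda^n(L)$ in homological degree $n-1$, so $(y\tens x_1)\tens(y'\tens x_2)\in\Lambda^2$ sits at degree $1$, and $\Q_j(y\tens x)$ sits at $\mathrm{AR}$-degree $1$ with CE-degree $0$, hence total simplicial degree $1$. Thus $E^2(2)$ is concentrated in $s\in\{0,1\}$, not $\{1,2\}$, and your claim that $E^2_{0,*}(2)=0$ is false. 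This does not break the argument: a page concentrated in two adjacent filtration degrees still admits no $d^r$ with $r\geq 2$, since every such differential has either empty source ($s\geq 2$) or target in negative $s$. You just need to correct the stated degrees; the paper's one-line justification is precisely that $E^2(2)$ is concentrated in degrees $0$ and $1$.
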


\begin{proof}
    Since classes in the tensor factor $$A=\Lambda\{\gamma_I(\Q_J(y\tens w)), y\tens w\in H\tens B\}$$ of \Cref{E2upperbound} have weight at least two, classes of weight two lie in exactly one of the two tensor components $A$ and $H^{\tiLie}_{*,*}(\tilde\g)$. The weight two classes in $A$ are of the form $\Q_j (y \tens w)$ where $w$ has weight one, i.e. $w$ is an element of the $\F$-basis $B$ of $V=H_*(X)$, cf. \Cref{notationbasisforV}. The weight two classes in $H^{\tiLie}_{*,*}(\tilde\g)$ are of the form $y\tens \langle x_a, x_b \rangle$ and $(y\tens x_a)\tens (y'\tens x_b)$. 
Hence the weight two part of the spectral sequence has $E^2$-page concentrated in simplicial degrees $0$, $1$ and thus cannot admit higher differentials.
\end{proof}
In particular, this demonstrates that for a parallelizable $M$, the $\F$-module $H_*(B_2(M;X))$ depends on and only on the cohomology ring $H^*(M^+)$ when $H_*(X)$ has at least two generators.
\begin{remark}
    This is in contrast to the case where $X=\mathbb{S}^r$ has only one generator in its homology: B\"{o}digheimer-Cohen-Taylor showed that for any $n$-manifold $M$, $$\bigoplus_{k\geq 1} H_*(B_k(M;\mathbb{S}^r))\cong \bigotimes^n_{i=0} H_*(\Omega^{n-i}S^{n+r})^{\tens \mathrm{\ dim}\ H_i(M)}$$ depends only on $H^*(M)$ as an $\F$-module \cite{BCT}. 
    
    There is a clear bijection from the weight 2 part of their decomposition to the basis above: let $x_k$ denote the generator of the free $\mathbb E_{n}$-algebra $H_*(\Omega^{n}\Sigma^n S^k)$. For $y$ a basis element of $\widetilde H^i(M^+)\cong H_i(M)$, the bijection sends $\Q_j(y\tens x_{n+r})$ to the tensor with $Q_j(x_{r+i})$ in the tensor factor $H_*(\Omega^{n-i}S^{n+r})$ corresponding to $y$ and 1 in all other tensor factors. The $\tiLie$-algebra $\tiLie\g$ is trivial, so $\wt_2(H^{\tiLie}(\tiLie\g))\cong \{yy'\}$ where $y, y'$ ranges over distinct basis of $\widetilde H^i(M^+)$ and the bijection sends $yy'$ to the tensor with factors $y$, $y'$ and 1 in all other slots.
    
    On the other hand, the homology of $\mathrm{Conf}_2(M)$, the space of ordered configurations of two points in $M$, also depends only on the cup product structure of $H^*(M)$ as discussed in \cite[Section 1.1]{peterson}. 
\end{remark}

\begin{corollary}\label{closedweight3}
If in addition $M$ is a closed manifold, then
the weight three part of the spectral sequence (\ref{sseq}) collapses on the $E^2$-page, and a basis for $H_*(B_3(M;X))$ is given by
\begin{align*}
   E^\infty(3)\cong E^2(3)\cong&\bigoplus_{x,x'\in B,y,y'\in H} \F\{(\Q_{j}(y\tens x))\tens (y'\tens x'), 0\leq j<|y|\}\\
   &\oplus\mathrm{wt}_3(H^{\tiLie}_{*,*}(\tilde\g)),
\end{align*} 
\end{corollary}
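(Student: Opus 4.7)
The plan is to follow the template of \Cref{weight2}: use \Cref{E2upperbound} together with \Cref{smallweightisom} (which gives equality in weight $<4$) to pin down the weight-$3$ part of the $E^2$-page, track simplicial degrees, and invoke sparsity to eliminate higher differentials. Closedness of $M$ enters only in an additional argument killing the single residual differential that sparsity cannot rule out.

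The first step is to enumerate weight-$3$ classes in $\pi_{*,*}\CE(\AR(\tilde\g))$. Every exterior generator $\gamma_I\Q_J(y\tens w)$ has weight $2^{|I|+|J|}\cdot \mathrm{wt}(w)$ with $|J|\geq 1$, so $3$ is never the weight of a single generator. Consequently, a weight-$3$ class is either a product of a weight-$2$ generator $\Q_j(y\tens x)$ (with $x\in B$) and a weight-$1$ class $y'\tens x'\in H^{\tiLie}_{0,*}(\tilde\g)$, or a pure weight-$3$ class from $H^{\tiLie}_{*,*}(\tilde\g)$. This reproduces the stated direct-sum decomposition at the $E^2$-level.

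Next, I locate classes by simplicial degree in the total complex of the double complex $\CE(\AR(\tilde\g))$: the generator $\Q_j(y\tens x)$ has $\AR$-simplicial degree $1$, so $(\Q_j(y\tens x))\tens(y'\tens x')$ sits in total simplicial degree $s=2$; meanwhile the weight-$3$ classes in $H^{\tiLie}_{*,*}(\tilde\g)$ arising from $\Lambda^1$, $\Lambda^2$, and $\Lambda^3$ of $\tilde\g$ occupy $s=0,1,2$ respectively. All weight-$3$ classes therefore live in $s\in\{0,1,2\}$. Since the bar spectral sequence satisfies $d_r\colon E^r_{s,t}\to E^r_{s-r,t+r-1}$ for $r\geq 2$, the only potentially nontrivial higher differential in weight $3$ is $d_2\colon E^2_{2,t}(3)\to E^2_{0,t+1}(3)$.

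The main obstacle, and the place where closedness of $M$ is needed, is showing that this residual $d_2$ vanishes. My approach is to analyze $d_2$ on bar representatives: on a class of type $(\Q_j(y\tens x))\tens(y'\tens x')$ it is determined by the compatibility of the cup product in \Cref{bracketbhk} with the Steenrod-twisted $\Q$-operations of \Cref{steenrodtwistQ}, while on a $\Lambda^3$-class it is controlled by the $\tiLie$-bracket composed with the cup product. When $M$ is closed, Poincar\'e duality constrains these pairings so that their image in $\wt_3(H^{\tiLie}_{0,*}(\tilde\g))$ is $\delta$-exact, forcing $d_2=0$. The delicate point will be bookkeeping the shifted-exterior signs alongside the twisted $\R$-action on $\widetilde{H}^*(M^+)\tens\free^{\Lie_{\R}}_{\Mod_{\F}}(\Sigma^n H_*(X))$ to verify this vanishing carefully; everything else is routine sparsity and the identification of $E^2$ already provided by \Cref{E2upperbound}.
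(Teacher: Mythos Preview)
Your setup and simplicial-degree bookkeeping are correct, and you correctly isolate $d_2\colon E^2_{2,*}(3)\to E^2_{0,*}(3)$ as the only potential obstruction. But your plan for the final step goes in the wrong direction. You propose to analyze the $d_2$-differential itself, tracking cup products, Steenrod-twisted $\Q$-actions, and invoking Poincar\'e duality to force the image of $d_2$ to be $\delta$-exact. This is both unnecessarily complicated and uses the wrong ingredient: Poincar\'e duality plays no role here, and you give no concrete mechanism by which it would constrain $d_2$.

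The paper's argument is far simpler: it shows that the \emph{target} $E^2_{0,*}(3)$ is already zero. Classes in simplicial degree $0$ at weight $3$ are exactly elements of the form $y\tens\langle\langle x_1,x_2\rangle,x_3\rangle$ in $\wt_3(H^{\tiLie}_{0,*}(\tilde\g))$. The point of closedness is that $\widetilde{H}^0(M^+)\cong H^0(M)$ contains a unit class $d$ (the constant function $1$) with $d\cup y=y$ for all $y\in\widetilde{H}^*(M^+)$. Then $\delta\big((y\tens\langle x_1,x_2\rangle)\tens(d\tens x_3)\big)=(y\cup d)\tens\langle\langle x_1,x_2\rangle,x_3\rangle=y\tens\langle\langle x_1,x_2\rangle,x_3\rangle$, so every such class is a CE boundary. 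Hence $\wt_3(H^{\tiLie}_{0,*}(\tilde\g))=0$, the weight-$3$ $E^2$-page is concentrated in simplicial degrees $1$ and $2$, and sparsity alone kills all higher differentials with no analysis of $d_2$ whatsoever. Closedness enters only through the existence of $d$, not through duality.
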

\begin{proof}
Let $d$ denote the generator for $\widetilde{H}^0(M^+)\cong H^0(M)$. Then any element that is a sum of $y\tens \langle\langle x_1,x_2\rangle,x_3\rangle\in H\tens B$ is killed by a sum of $(y\tens \langle x_1,x_2\rangle)\tens (d\tens x_3)$. Since classes in $A$ have weights positive powers of two, weight three classes on the $E^2$-page either live in $\mathrm{wt}_3(H^{\tiLie}_{*,*}(\tilde\g))$ with simplicial degree one or two, or have the form $$(\Q^j(y\tens x))\tens (y'\tens x')\in \wt_2(A)\tens\mathrm{wt}_1(H^{\tiLie}_{*,*}(\tilde\g)) $$ with simplicial degree two. Hence $E^2(3)$ is concentrated in simplicial degree $1$ and $2$, so there cannot be any higher differentials.
\end{proof}

At weight four we can no longer deduce that the spectral sequence (\ref{sseq}) collapses on the $E^2$-page using sparsity arguments.
An upper bound for the bigraded $\F$-module $E^2(4)$ is given by the weight four part of $A\tens H^{\tiLie}(\tilde\g)$, which consists of:
\begin{enumerate}
\item $\Q_i(y\tens \langle x,x' \rangle)$ in simplicial degree one,
    \item $\Q_i\Q_j(y\tens x)$ and $\Q_i(y\tens x)\tens (y'\tens \langle x_1, x_2\rangle)$ in simplicial degree two,
    \item $\Q_i(y\tens x)\tens\Q_j(y'\tens x')$ and $\Q_i(y\tens x)\tens(y_1\tens x_1)\tens(y_2\tens x_2)$  in simplicial degree three,
    \item the weight four part of $H^{\tiLie}(\tilde\g)$.
\end{enumerate}
There could well be a $d^2$-differential from degree considerations.

We close this section by a few example computations: the closed torus, the punctured genus $g$ surfaces with $g\geq 1$ and the (punctured) real projective space $\mathbb{RP}^3$.

\subsection{Example computations: closed torus and punctured genus $g$ surfaces}\label{torus}

Let $\Sigma_{g,1}$ be a once-punctured surface of genus $g\geq 1$ and $\Sigma_1$ the closed torus. Let $\widetilde{B}=\{x_i\}_i$ be a totally ordered basis for $H_*(X)$ and $B=\{\sigma^2 x_i\}_i$ with the induced ordering. Then $$\widetilde{H}^*(\Sigma_{g,1}^+) = \left\{ \begin{array}{lcl}
 \F\{a_i\oplus b_i,i=1,\ldots,g\} &  *=1 \\
\F\{c\} & *=2\\
0&\mbox{otherwise}
\end{array}\right.
$$ with nonzero cup products $a_i\cup b_i=c$ for all $i$ and no nontrivial Steenrod operations.

For the closed surface $\Sigma_1$, we have $$\widetilde{H}^*(\Sigma_1^+)\cong H^*(\Sigma_1) = \left\{ \begin{array}{lcl}
\F\{d\} & *=0\\
 \F\{a, b\} &  *=1 \\
\F\{c\} & *=2\\
0&\mbox{otherwise}
\end{array}\right.$$
with nonzero cup products $a\cup b=c$ and $d\cup y =y$ for all $y\in H^*(\Sigma_1)$.

\subsubsection{Weight two}
For $M=\Sigma_{g,1}$, the weight two classes supporting nonzero $\CE$ differentials are  $\delta(a_i\tens x_1, b_i \tens x_2)=c\tens\langle x_1, x_2\rangle $ for $x_1\neq x_2\in B$, since these are the only nonzero cup products not involving the unit. Denote by $H^1$ the set of generators $\{a_i, b_i, i=1,\ldots,g\}$ for $\widetilde{H}^1(\Sigma_{g,1}^+)$. Impose a total ordering on $H^1\cup\{c,d\}$. By Corollary \ref{weight2}, a basis for $H_*(B_2(\Sigma_{g,1};X))$ is given by
\begin{align*}
   E^\infty(2)&=E^2(2)\cong\bigoplus_{x\in B} \F\{\Q_0(y\tens x), y\in H^1; \Q_0(c\tens x), \Q_1(c\tens x) \} \\
   &\oplus \bigoplus_{x_1< x_2 \in B}\F\{y\tens\langle x_1, x_2\rangle , (y\tens x_1)\tens(y\tens x_2), y\in H^1; (c\tens x_1)\tens(c\tens x_2)\}\\
   & \oplus \bigoplus_{x_1, x_2\in B}\F\{(y\tens x_1)\tens(c\tens x_2), y\in H^1\}\oplus \bigoplus_{x\in B}\F\{(y\tens x)\tens (y'\tens x), y< y'\in H^1\cup\{c\}\}\\
   & \oplus \bigoplus_{x_1<x_2\in B}\F\{(y\tens x_1)\tens(y'\tens x_2)+(a_1\tens x_1)\tens(b_1\tens x_2), y\neq y'\in H^1, (y,y')\neq(a_i, b_i)\}.
\end{align*}

For $M=\Sigma_1$, the weight two classes supporting $\CE$ differentials are  $$\delta((a\tens x_2)\tens( b \tens x_2))=c\tens\langle x_1, x_2\rangle \mathrm{\ and\ }\delta((d\tens x_1)\tens(y \tens x_2))=y\tens\langle x_1, x_2\rangle $$ for $x_1\neq x_2\in B$ and $y\in\widetilde{H}^*(\Sigma_1^+)$. By Corollary \ref{weight2}, a basis for $H_*(B_2(\Sigma_{1};X))$ is given by
\begin{align*}
   E^\infty(2)&=E^2(2)\cong\bigoplus_{x\in B} \F\{\Q_0(y\tens x), y\in H^1; \Q_0(c\tens x), \Q_1(c\tens x) \} \\
   &\oplus \bigoplus_{x_1< x_2 \in B}\F\{ (y\tens x_1)\tens(y\tens x_2), y\in H^1; (z\tens x_1)\tens(z\tens x_2)\}\\
   & \oplus \bigoplus_{x_1\neq x_2\in B}\F\{(y\tens x_1)\tens(z\tens x_2), y\in H^1\}\\
    &\oplus \bigoplus_{x\in B}\F\{(y\tens x)\tens (y'\tens x), \{y< y'\}\in \{a,b,c,d\}\}\\
    &\oplus \bigoplus_{x_1< x_2\in B}\F\{(y\tens x_1)\tens(y'\tens x_2), y,y'\in H^1, \{y,y'\}\neq\{a,b\} \}\\
   & \oplus \bigoplus_{x_1< x_2\in B}\F\{(y\tens x_1)\tens(y'\tens x_2)+(d\tens x_1)\tens(c\tens x_2), \{y,y'\}=\{a,b\} \mathrm{\ or\ } (y,y')=(c,d)\}.
\end{align*}
\begin{example}
    When $X=\mathbb{S}^k$ with $k\geq 1$, we have $B=\{x=\sigma_2\iota_k\}$, so $H_*(B_2(\Sigma_1,S^k))$ has $\F$-basis $$\{\Q_0(a\tens x),\Q_0(b\tens x),\Q_0(c\tens x),\Q_1(c\tens x); (y\tens x)\tens (y'\tens x), \{y<y'\}\subset\{a,b,c,d\}\}.$$ The weight two part of B\"{o}digheimer-Cohen-Taylor's decomposition \cite{BCT}
    \begin{equation}\label{BCTtorus}
        \bigoplus_{k\geq 1} H_*(B_k(\Sigma_1;\mathbb{S}^k))\cong \bigotimes^n_{i=0} H_*(\Omega^{2-i}S^{2+k})^{\tens \mathrm{\ dim}\ H_i(M)}\cong H_*(\Omega^2\Sigma^2 S^k)\tens H_*(\Omega\Sigma S^{1+k})^{\tens 2}\tens H_*(S^{2+k})
    \end{equation}
    is an $\F$-module on generators $Q_0(x_k)\tens 1\tens1\tens 1,Q_1(x_k)\tens 1\tens1\tens 1,1\tens \Q_0(x_{k+1})\tens1\tens 1,1\tens 1\tens\Q_0(x_{k+1})\tens 1$, as well as 6 other elements where we let two of the four tensor factors be 1 and the other two be the weight 1 generators. There is a one-to-one correspondence by sending $y\tens x$ to $x_{k+2-|y|}$ and $\Q_i(y\tens x)$ to $Q_i(x_{k+2-|y|})$ for $y=a,b,c,d$.
\end{example}
\subsubsection{Weight three}
 Classes in $A=\Lambda\big(\gamma_I(\Q^{j_1}|\cdots|\Q^{j_m}|(y\tens w)), m\geq 1\big)$ have weights positive powers of 2. Hence weight three classes in $E^2(3)$ either live in $\mathrm{wt}_3(H^{\tiLie}_{*,*}(\tilde\g))$ or has the form $$(\Q_j(y\tens x))\tens (y'\tens x')\in A\tens H^{\tiLie}_{*,*}(\tilde\g),\ \  x,x'\in B.$$

Let $H$ be the set of generators for $\widetilde{H}^*(\Sigma_{g,1}^+)\cong \widetilde{H}^*(\Sigma_g)$ and $H^1$ the set of generators for $\widetilde{H}^1(\Sigma_{g,1}^+)$. Recall that $\tilde{\g}=\widetilde{H}^*(\Sigma_g)\tens\free^{\tiLie_{\R}}_{\Mod_{\F}}(\Sigma^n H_*(X))$.  Then we have
\begin{align*}
   E^2(3)\cong&\bigoplus_{x_1,x_2\in B} \F\{(\Q_0(y\tens x_1))\tens (y'\tens x_2), y\in H^1, y'\in H\}\\
   &\oplus \bigoplus_{x_1,x_2\in B} \F\{(\Q_0(c\tens x_1))\tens (y\tens x_2), (\Q_1(c\tens x))\tens (y\tens x_2), y\in H \} \\
   &\oplus\mathrm{wt}_3(H^{\tiLie}_{*,*}(\tilde\g)).
\end{align*}   
A complete list of an $\F$-basis of $\mathrm{wt}_3(H^{\tiLie}_{*,*}(\tilde\g))$ can be written down in a straight forward way.

The $E^2$-page is concentrated in simplicial degree $0,1,2$. We need to investigate all classes in $E^2_{2,*}(3)$ to see if they support  nontrivial $d^2$-differentials to $E^2_{0,*+1}(3)$. Note that all classes in $E^2_{0,*}(3)$ are of the form $y\tens\langle \langle x_1, x_2\rangle ,x_3\rangle $ for $y\in H^1$. Since $E^2(3)$ is natural in $H_*(V)$, we can assume $x_1, x_2,x_3\in B$ have internal degree $k$ respectively. There are two cases:
\begin{enumerate}
    \item The class $(\Q_j(y_1\tens x_1))\tens (y_2\tens x_2)\in E^2_{2,*}(3)$  has internal degree at most $3k-5$ for all $y_1, y_2\in H$, while the class $y\tens\langle \langle x_1, x_2\rangle ,x_1\rangle $ has internal degree $3k-3$ for all $y\in H^1$. Hence they do not support $d^2$-differentials.
    \item The other type of classes in filtration 2 are of the form $(y_1\tens x_1)\tens(y_2\tens x_2)\tens(y_3\tens x_3)$ with internal degrees at most $3k-5$, while the class $y\tens\langle \langle x_1, x_2\rangle ,x_3\rangle $ has internal degree $3k-3$. Hence these classes do not support $d^2$-differentials either.
\end{enumerate}
Therefore the weight three part of the spectral sequence collapses at the $E^2$-page, and we obtain a basis for $H_*(B_3(\Sigma_{g,1};X))$.

For the closed surface $\Sigma_1$, $\tilde{\g}=H^*(\Sigma_{1})\tens\free^{\tiLie_{\R}}_{\Mod_{\F}}(\Sigma^n H_*(X))$ and Corollary \ref{closedweight3} says that 
\begin{align*}
   E^\infty(3)=E^2(3)\cong&\bigoplus_{x_1,x_2\in B} \F\{(\Q_0(y\tens x_1))\tens (y'\tens x_2), y\in H^1, y'\in H\cup\{d\}\}\\
   &\oplus \bigoplus_{x_1,x_2\in B} \F\{(\Q_0(c\tens x_1))\tens (y\tens x_2), (\Q_1(c\tens x))\tens (y\tens x_2), y\in H\cup\{d\} \} \\
   &\oplus\mathrm{wt}_3(H^{\tiLie}_{*,*}(\tilde\g)).
\end{align*}   

We do not list the $\F$-basis of $\mathrm{wt}_3((H^{\tiLie}_{*,*}(\tilde\g))$ for simplicity.
\begin{example}
    As in the weight two case, our basis for $H_*(B_3(\Sigma_1,\mathbb{S}^k)), k\geq1$ is in bijection with the weight 3 part of \Cref{BCTtorus} by sending $y\tens x$ to $x_{k+2-|y|}$ and $\Q_i(y\tens x)$ to $Q_i(x_{k+2-|y|})$ for $y=a,b,c,d$.
\end{example}
\subsection{Example computations: (punctured) real projective space}\label{RP3}
The simplest examples of parallelizable manifolds admitting nontrivial Steenrod actions other than $Sq^0$ are the  real projective space $\mathbb{RP}^3$ and the once-punctured real projective space $\dot{\mathbb{RP}^3}$.

Let $y$ be a generator for $H^1(\mathbb{RP}^3)$. Then $$\widetilde{H}^*((\mathbb{RP}^3)^+)\cong H^*(\mathbb{RP}^3)=\F[y]/(y^4),\widetilde{H}^*((\dot{\mathbb{RP}^3})^+)=\widetilde H^*(\mathbb{RP}^3)=\F\{y,y^2,y^3\}$$ with the obvious cup products and one nontrivial Steenrod operation $Sq^1(y)=y^2$. 

\subsubsection{Weight two}
%Classes of weight 2 in $\CE(\AR(\g))$ are $y^a\tens \Q^j(x), \Q^j|(y^a\tens x),$ and $y^a\tens\langle x, x'\rangle $ for $x, x'\in B$. They are concentrated in filtration degree $0$ and $1$, so the weight 2 part of the spectral sequence collapse at the $E^2$-page.
We deduce $H_*(B_2(\dot{\mathbb{RP}^3};X))$ and $H_*(B_2(\mathbb{RP}^3;X))$ from Corollary \ref{weight2}.
For $M=\dot{\mathbb{RP}^3}$, there is only one nontrivial cup product $y\cup y^2=y^3$, so
\begin{align*}
 E^\infty(2)=E^2(2)&=\bigoplus_{x\in B,a=1,2,3}\F\{\Q_j(y^a\tens x), 0\leq j<a\}\\
  &\oplus\bigoplus_{x\in B}\F\{(y^a\tens x)\tens(y^b\tens x), 1\leq a<b\leq 3\}\\
 &\oplus \bigoplus_{x_1< x_2 \in B}\F\{y\tens\langle x_1, x_2\rangle ; (y^a\tens x_1)\tens(y^a\tens x_2),a=2,3\}\\
   & \oplus \bigoplus_{x_1\neq x_2\in B}\F\{(y^a\tens x_1)\tens(y^3\tens x_2), a=1,2\}
   \\
   & \oplus \bigoplus_{x_1< x_2\in B}\F\{(y^1\tens x_1)\tens(y^2\tens x_2)+(y^2\tens x_1)\tens(y^1\tens x_2) \}.
\end{align*}
For $M=\mathbb{RP}^3$, the nonzero cup products are $y\cup y=y^2, y\cup y^2=y^3$ and $1\cup y^a=y^a$ for $0\leq a\leq 3$, so
\begin{align*}
 E^\infty(2)=E^2(2)&=\bigoplus_{x\in B,a=1,2,3}\F\{\Q_j(y^a\tens x), 0\leq j<a\}\\
 &\oplus\bigoplus_{x\in B}\F\{(y^a\tens x)\tens(y^b\tens x), 0\leq a<b\leq 3\}\\
 &\oplus \bigoplus_{x_1< x_2 \in B}\F\{ (y^a\tens x_1)\tens(y^a\tens x_2),a=2,3\}\\
   & \oplus \bigoplus_{x_1\neq x_2\in B}\F\{(y^a\tens x_1)\tens(y^3\tens x_2), a=1,2\}\\
   & \oplus \bigoplus_{x_1< x_2\in B}\F\{(y^a\tens x_1)\tens(y^b\tens x_2)+(y^3\tens x_1)\tens(1\tens x_2), (a,b)\neq (3,0) \}\\
   & \oplus \bigoplus_{x_1< x_2\in B}\F\{(y^a\tens x_1)\tens(1\tens x_2)+(1\tens x_1)\tens(y^a\tens x_2), a=1,2,3 \}.
\end{align*}
\begin{example}
    When $X=\mathbb{S}^k$ with $k\geq 1$, we have $B=\{x=\sigma_2\iota_k\}$, so $H_*(B_2(\mathbb{RP}^3,S^k))$ has $\F$-basis $$\{\Q_j(y^a\tens x), 0\leq j<a, a=1,2,3; (y^a\tens x)\tens (y^b\tens x), 0\leq a<b\leq 3\}.$$ A bijection with weight 3 part of B\"{o}digheimer-Cohen-Taylor's decomposition \cite{BCT}
    \begin{align*}
        \bigoplus_{k\geq 1} H_*(B_k(\mathbb{RP}^3;\mathbb{S}^k))\cong& \bigotimes^n_{i=0} H_*(\Omega^{3-i}S^{3+k})^{\tens \mathrm{\ dim}\ H_i(M)}\\ \cong& H_*(\Omega^3\Sigma^3 S^{k})\tens H_*(\Omega^2\Sigma^2 S^{k+1})\tens H_*(\Omega\Sigma S^{k+2})\tens H_*(S^{k+3})\\
        \cong& \free^{E_3}(\F\{x_k\})\tens\free^{E_2}(\F\{\{x_{k+1}\})\tens\free^{E_1}(\F\{x_{k+2}\})\tens\F\{x_{k}\}
    \end{align*}
    is given by sending $y^a\tens x$ to $x_{k+3-a}$ and $\Q_i(y^a\tens x)$ to $Q_i(x_{k+3-a})$ for $0\leq a\leq 3.$ 
\end{example}
\subsubsection{Weight three}
For the closed manifold $\mathbb{RP}^3$ and $\tilde{\g}=H^*(\mathbb{RP}^3)\tens\free^{\tiLie_{\R}}_{\Mod_{\F}}(\Sigma^n H_*(X))$, it follows from Corollary \ref{closedweight3} that
\begin{align*}
    &E^\infty(3)=E^2(3)
    \cong\mathrm{wt}_3(H^{\tiLie}_{*,*}(\tilde\g))\oplus \bigoplus_{x_1,x_2\in B, 1\leq a\leq 3, 0\leq b\leq 3} \F\{(\Q_j(y^a\tens x_1))\tens (y^b\tens x_2), 0\leq j<a\}.
\end{align*}

\

For the punctured real projective space $\dot{\mathbb{RP}^3}$ and  $\tilde{\g}=\widetilde H^*(\mathbb{RP}^3)\tens\free^{\tiLie_{\R}}_{\Mod_{\F}}(\Sigma^n H_*(X))$, weight three classes in $E^2(3)$ either live in $\mathrm{wt}_3(H^{\tiLie}_{*,*}(\tilde\g))$ or has the form $$(\Q_j(y^a\tens x))\tens (y^b\tens x')\in A\tens H^{\tiLie}_{*,*}(\tilde\g)$$ with $x,x'\in B$ and $1\leq a,b \leq 3$. Therefore
\begin{align*}
   E^2(3)=\mathrm{wt}_3(H^{\tiLie}_{*,*}(\tilde\g))\oplus \bigoplus_{x_1,x_2\in B, 1\leq a,b\leq 3} \F\{(\Q_j|(y^a\tens x_1))\tens (y^b\tens x_2), 0\leq j<a\}.
\end{align*}  
A complete list of an $\F$-basis for $\mathrm{wt}_3(H^{\tiLie}_{*,*}(\tilde\g))$ is given by
\begin{enumerate}
    \item $y\tens \langle \langle x_1,x_2\rangle ,x_3\rangle $ for  $x_1,x_2,x_3\in B, x_1<x_2, x_1<x_3$ in simplicial degree 0;
    %\item $(y^a\tens\langle x_1,x_2\rangle )\tens(y^b\tens x_3)$ for $a=1,2, (a,b)\neq(1,2)$ and $x_1,x_2,x_3\in B, x_1<x_2$ in filtration degree 1;
    \item $(y^3\tens\langle x_1,x_2\rangle )\tens(y^b\tens x_3)+(y^3\tens\langle x_1,x_3\rangle )\tens(y^b\tens x_2)+(y^3\tens\langle x_2,x_3\rangle )\tens(y^b\tens x_1)$ for $b=1,2$ 
    
    and 
    $(y\tens\langle x_1,x_2\rangle )\tens(y^2\tens x_3)+(y\tens\langle x_1,x_3\rangle )\tens(y^2\tens x_2)+(y\tens\langle x_2,x_3\rangle )\tens(y^2\tens x_1)$ for distinct $x_i\in B$ in simplicial degree 1;
    \item $(y^a\tens x_1)\tens (y^b\tens x_2)\tens (y^c\tens x_3)$ for $\{1,2\},\{1,1\}\nsubseteq\{a,b,c\}$ and $x_i\in B$;
    
    $\sum_{\{i,j,k\}=\{1,2,3\},i<j} (y\tens x_i)\tens (y\tens x_j)\tens (y^2\tens x_k)$,
    
    $\sum_{\{i,j,k\}=\{1,2,3\},j<k} (y\tens x_i)\tens (y^2\tens x_j)\tens (y^2\tens x_k)$ for distinct $x_1, x_2, x_3\in B$
    in simplicial degree 2.
\end{enumerate}

Again  the $E^2$-page is concentrated in simplicial degrees $0,1,2$, and we use sparsity to rule out higher differentials.
Suppose that $x_1, x_2, x_3$ have internal degree $k$. We examine the two cases that could potentially support a $d^2$-differential.

\begin{enumerate}
    \item The class $(\Q_j(y^a\tens x_1))\tens (y^b\tens x_2)\in E^2_{2,*}(3)$  has internal degree at most $3k-5$ for all $1\leq a,b\leq 3$, while the class $y\tens\langle \langle x_1, x_2\rangle ,x_1\rangle $ has internal degree $3k-3$. Hence they do not support $d^2$-differentials.
    \item The other type of classes in simplicial degree 2 are of the form $(y^a\tens x_1)\tens(y^b\tens x_2)\tens(y^c\tens x_3)$ with internal degrees at most $3k-5$, while the class $y\tens\langle \langle x_1, x_2\rangle ,x_3\rangle $ has internal degree $3k-3$. Hence these classes do not support $d^2$-differentials either.
\end{enumerate}
Therefore the weight three part of the spectral sequence collapses on the $E^2$-page, and we obtain a basis for $H_*(B_3(\dot{\mathbb{RP}^3};X))$.

\section{Odd primary homology}\label{section6}
In this last section, we apply the same methods to study the mod $p$ homology of $B_k(M;X)$ for $p>2$ via the Knudsen spectral sequence with $\Fp$ coefficient.

\subsection{Odd primary Knudsen spectral sequence}
We start by recalling partial progress in understanding the unary operations on the mod $p$ homology of spectral Lie algebras by Kjaer \cite{kjaer}. He constructed  weight $p$ Dyer-Lashof-type operations in analogy to Behrens' construction of $\Q^j$, which was further clarified by the work of Konovalov.
\begin{proposition}\cite[Definition 3.2]{kjaer}\cite[Definition 2.5.17]{konovalov}\label{defofoddop}
    Let $L$ be a spectral Lie algebra. Then $H_*(L;\Fp)$ admits unary operations $$\overline{ \beta^{\epsilon} Q^j}: H_*(L;\Fp)\rightarrow H_{*+2(p-1)i-\epsilon-1}(L;\Fp),\ \epsilon\in\{0,1\}, j\in\mathbb{Z}.$$ On a class $x\in H_*(L;\Fp)$ such that if $|x|$ is even then $2j\neq x$, the class $\overline{ \beta^{\epsilon} Q^j}(x)$ is given by $\xi_*( \sigma^{-1}\beta^{\epsilon} Q^j(x))$, where $\beta^{\epsilon}Q^j$ is a mod $p$ Dyer-Lashof operation, $\sigma^{-1}$ the desuspension isomorphism, and $\xi:\partial_p(\mathrm{Id})\tens_{h\Sigma_p}L^{\tens p}\rightarrow L$ the $p$th structure map of the spectral Lie algebra $L$. When $|x|=2l$, define   $\overline{ \beta Q^l}(x)$ via the isomorphism $H_*(\partial_{p}(\id)\tens_{h\Sigma_{p}}(S^{2l})^{\tens p})\cong H_*(\Sigma^{-1}(\partial_{p}(\id)\tens_{h\Sigma_{p}}(S^{2l-1})^{\tens p}))$.
\end{proposition}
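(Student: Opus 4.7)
The plan is to construct $\overline{\beta^\epsilon Q^j}$ as a three-stage composite mirroring Behrens' construction at the prime $2$. Starting from a class $x\in H_d(L;\Fp)$, one first applies the standard mod $p$ extended-power Dyer--Lashof operation to obtain $\beta^\epsilon Q^j(x)\in H_{d+2j(p-1)-\epsilon}((L^{\tens p})_{h\Sigma_p};\Fp)$, then desuspends via a map $\sigma^{-1}$ into $H_{d+2j(p-1)-\epsilon-1}(\partial_p(\mathrm{Id})\tens_{h\Sigma_p} L^{\tens p};\Fp)$, and finally applies the $p$th structure map $\xi_*$ of the spectral Lie algebra to land in $H_*(L;\Fp)$.

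First I would recall the extended-power construction of the mod $p$ Dyer--Lashof operations: for $x\in H_d(L;\Fp)$, the element $\beta^\epsilon Q^j(x)$ is given (up to a nonzero constant) by $e_{(2j-d)(p-1)-\epsilon}\tens x^{\tens p}$, where $e_k$ is a basis element of $H_*(B\Sigma_p;\Fp)$. These operations are classically defined on spectra arising from connected spaces; the extension to an arbitrary spectrum $L$ follows by the filtered colimit argument used at the end of the proof of \Cref{omar}, writing $L\tens\Fp$ as a filtered colimit of finite wedges of sphere spectra smashed with $\Fp$ and using that both extended powers and mod $p$ homology commute with filtered colimits.

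The crux is producing $\sigma^{-1}$. Via the Arone--Mahowald identification $\partial_p(\mathrm{Id})\simeq\mathbb{D}(\Sigma|\Pi_p|^{\diamond})$, one analyses the $\Sigma_p$-equivariant homotopy type of the partition complex to extract, on the relevant extended-power classes, a natural degree $-1$ map on mod $p$ homology. Concretely, the bottom cell of $\partial_p(\mathrm{Id})$ arising from a canonical splitting after smashing with $L^{\tens p}$ and passing to homotopy orbits produces a section of the projection $\partial_p(\mathrm{Id})\tens_{h\Sigma_p}L^{\tens p}\to\Sigma^{-1}(L^{\tens p})_{h\Sigma_p}$ on the image of the Dyer--Lashof classes $e_k\tens x^{\tens p}$. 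This is the step that requires the most care: in contrast to the case $p=2$, where $\partial_2(\mathrm{Id})\simeq\mathbb{S}^{-1}$ has trivial $\Sigma_2$-action and $\sigma^{-1}$ is a literal desuspension isomorphism, at odd primes the partition complex has a richer cell structure and one must verify that the extracted desuspension is canonical, $\Sigma_p$-equivariant, and well-defined on precisely the extended-power classes produced in the previous step.

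With $\sigma^{-1}$ in hand, one sets $\overline{\beta^\epsilon Q^j}(x):=\xi_*(\sigma^{-1}(\beta^\epsilon Q^j(x)))$. A direct degree count confirms the total shift $2j(p-1)-\epsilon-1$, with contributions $2j(p-1)-\epsilon$ from the Dyer--Lashof operation, $-1$ from the desuspension, and $0$ from $\xi_*$. Naturality in $L$ and $\Sigma_p$-equivariance of the chosen section then ensure that $\overline{\beta^\epsilon Q^j}$ is a well-defined natural unary operation on the mod $p$ homology of spectral Lie algebras. As emphasised in the introduction, the full set of relations among these operations, in contrast to Behrens' complete analysis at $p=2$, was left open by Kjaer and provides the motivation for the present paper.
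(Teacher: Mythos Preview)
The paper does not prove this proposition: it is stated purely as a recall of Kjaer's construction, cited as \cite[Definition 3.2]{kjaer}, and the formula $\overline{\beta^\epsilon Q^j}(x)=\xi_*(\sigma^{-1}\beta^\epsilon Q^j(x))$ is the full content of the statement rather than a conclusion to be established. There is therefore no proof in the paper to compare your proposal against; what you have written is an unpacking of why each ingredient in the definition is well-defined, not a proof of a theorem.

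As such an unpacking, your account is broadly reasonable but speculative in one place. Your description of $\sigma^{-1}$ as arising from a section of a projection $\partial_p(\mathrm{Id})\tens_{h\Sigma_p}L^{\tens p}\to\Sigma^{-1}(L^{\tens p})_{h\Sigma_p}$ presupposes a $\Sigma_p$-equivariant map $\partial_p(\mathrm{Id})\to\mathbb{S}^{-1}$, and you correctly flag that this is not as transparent as at $p=2$ where $\partial_2(\mathrm{Id})\simeq\mathbb{S}^{-1}$ outright. The paper's own phrasing (``$\sigma^{-1}$ the desuspension isomorphism'') simply defers to Kjaer for the precise meaning and does not elaborate, so your caution here is appropriate, but the details you sketch go beyond anything the paper itself supplies or needs.
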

It follows from the instability condition of Dyer-Lashof operations that the allowability condition for the operations $\overline{ \beta^{\epsilon}}$ are given by  $\overline{ \beta^{\epsilon} Q^j}(x)=0$ if $j<\frac{|x|}{2}$. Analogous to the case $p=2$,  brackets of unary operations always vanish.
\begin{proposition}\cite[Proposition 3.7]{kjaer}\label{oddvanish}
    For $L$ a spectral Lie algebra, $[\overline{ \beta^{\epsilon} Q^j}(x), y]=0$ for any $\epsilon, j$ and $x,y\in H_*(L;\Fp)$ . 

\end{proposition}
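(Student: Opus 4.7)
The strategy is to reduce this vanishing to a concrete calculation at the level of partition complexes. By construction, $\overline{\beta^\epsilon Q^j}(x) = \xi_*\sigma^{-1}(\beta^\epsilon Q^j(x))$, where $\xi$ is the weight-$p$ structure map $\partial_p(\mathrm{Id}) \tens_{h\Sigma_p} L^{\tens p} \to L$, and the bracket $[-,-]$ is the induced map of the weight-$2$ structure map $\partial_2(\mathrm{Id}) \tens_{h\Sigma_2} L^{\tens 2} \to L$. By associativity of the operad action, the bracket $[\overline{\beta^\epsilon Q^j}(x), y]$ factors through the operadic composition $\gamma\colon \partial_2(\mathrm{Id}) \circ_1 \partial_p(\mathrm{Id}) \to \partial_{p+1}(\mathrm{Id})$ applied to the class $\sigma^{-1}(\beta^\epsilon Q^j) \tens x^{\tens p} \tens y$, followed by the induction transfer $\partial_{p+1}(\mathrm{Id}) \tens_{h(\Sigma_p \times \Sigma_1)}(-) \to \partial_{p+1}(\mathrm{Id}) \tens_{h\Sigma_{p+1}}(-)$ and the $(p+1)$-ary structure map of $L$. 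It suffices to show this composite is null on our class, and by naturality one may check this universally on the spectral Lie algebra $L = \free^{s\lie}(\Sp^n)$.

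Using Ching's topological model of the spectral Lie operad, $\gamma$ is Spanier--Whitehead dual to the partition complex collapse $\Sigma|\Pi_{p+1}|^{\diamond} \to \Sigma|\Pi_2|^{\diamond} \wedge \Sigma|\Pi_p|^{\diamond}$ coming from the inclusion of the substratum of partitions of $\{1, \ldots, p+1\}$ which refine $\{\{1, \ldots, p\}, \{p+1\}\}$. This identifies the desired vanishing with a statement about the image of $e_i \tens x^{\tens p} \tens y$ (where $e_i$ is the standard generator of $H_*(B\Sigma_p; \Fp)$ encoding $\beta^\epsilon Q^j$) inside $H_*(\partial_{p+1}(\mathrm{Id}) \tens_{h\Sigma_{p+1}} L^{\tens(p+1)}; \Fp)$. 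I would expand the induction transfer along the double coset $\Sigma_{p+1}/(\Sigma_p \times \Sigma_1)$, which has $p+1$ representatives permuting the label $y$ into each of the $p+1$ slots, and then match the resulting sum against Arone--Mahowald's explicit basis for this extended-power homology after pulling back through the partition-complex stratification.

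The vanishing should follow because each orbit-sum summand is forced to be \emph{inadmissible} in the Arone--Mahowald sense: the single $y$-slot in a partition-tree with a $p$-ary vertex above a binary vertex lies on the opposite side of the binary split from the $p$ copies of $x$, producing a word whose rightmost entry is a bracket rather than a Dyer--Lashof operation, and such trees are not part of the Arone--Mahowald basis after one imposes Kjaer's relations. At $p=2$ this is exactly the route by which Behrens deduces $[\Q^j(x),y]=0$; the odd-primary adaptation replaces the $\Sigma_2$-transfer computation by the corresponding $\Sigma_p \times \Sigma_1 \hookrightarrow \Sigma_{p+1}$ transfer, where the key input is that the Steinberg-type idempotent picking out the odd-primary Dyer--Lashof classes annihilates the induced class.

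\textbf{Main obstacle.} The delicate step is the third paragraph: handling the $(p+1)$-fold orbit sum, matching each summand against Arone--Mahowald's basis under the partition complex filtration, and tracking the Bockstein sign $\epsilon$ and the Steenrod-type corrections that appear in Behrens-type Adem relations for $\overline{\beta^\epsilon Q^j}$. A clean way to organize this bookkeeping is to run the argument inside the filtration spectral sequence for $|\Pi_{p+1}|$ by the rank of partitions, where the offending class appears on a filtration stratum that contracts to a point in the homotopy cofiber governing the Arone--Mahowald basis; this contraction is what produces the vanishing.
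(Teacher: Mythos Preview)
The paper does not give its own proof of this proposition; it is cited without proof from Kjaer \cite[Proposition 3.7]{kjaer}. There is therefore no in-paper argument to compare against.

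Your outline is broadly the correct strategy and matches in spirit what Kjaer (and, at $p=2$, Antol\'{i}n-Camarena following Behrens) actually does: reduce by naturality to the free spectral Lie algebra, factor $[\overline{\beta^\epsilon Q^j}(x),y]$ through the operadic composition $\partial_2(\mathrm{Id})\circ_1\partial_p(\mathrm{Id})\to\partial_{p+1}(\mathrm{Id})$, and then check the image vanishes using the Arone--Mahowald description of $H_*(\partial_{p+1}(\mathrm{Id})\tens_{h\Sigma_{p+1}}(-)^{\tens(p+1)};\Fp)$. Your identification of the main obstacle is also accurate: the honest work is the transfer/orbit-sum bookkeeping and matching against the explicit basis.

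That said, what you have written is an outline rather than a proof. The third paragraph asserts that each summand is ``inadmissible in the Arone--Mahowald sense'' and that a ``Steinberg-type idempotent'' kills the class, but neither claim is verified, and the phrase about ``Kjaer's relations'' is circular since those relations are precisely what this proposition is part of establishing. The filtration argument you sketch in the final paragraph is plausible but would need to be made precise: you must actually identify the stratum, exhibit the contraction, and confirm no correction terms survive from lower strata. If you intend to supply a self-contained proof rather than cite Kjaer, you should carry out the explicit computation in $H_*(\partial_{p+1}(\mathrm{Id})\tens_{h(\Sigma_p\times\Sigma_1)}(\mathbb{S}^n)^{\tens(p+1)};\Fp)$ and its image under the transfer, or else invoke the relevant lemma from \cite{kjaer} or \cite{omar} directly.
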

The relations among the unary operations were obtained by Konovalov.

\begin{proposition} \cite[Theorem 8.2.14]{konovalov}
Let $\R$ be the free algebra over $\mathbb{F}_p$ on generators $\overline{ \beta^{\epsilon} Q^j},\epsilon\in\{0,1\}$, subject to the relations
\begin{align*}
 \overline{ \beta^{\epsilon} Q^j}\cdot\overline{ \beta Q^i}=&(-1)^{\epsilon+1}\sum^{i+j-1}_{m=pi}\binom{p(m - i) - (p - 1)j + \epsilon -1}{m-pi} \overline{ \beta Q^m} \cdot \overline{ \beta^{\epsilon} Q^{j+i-m}}\\
 +&(1-\epsilon)\sum^{i+j-1}_{m=pi+1}\binom{p(m-i)-(p-1)j}{m-pi} \overline{ Q^m} \cdot \overline{\beta Q^{j+i-m}}
 \end{align*} 
 for $j<pi$, and
 $$ \overline{ \beta^{\epsilon} Q^j}\cdot\overline{ Q^i}=\sum^{i+j-1}_{m=pi+1}\binom{p(m - i) - (p - 1)j -1}{m-pi-1} \overline{ \beta^\epsilon Q^m} \cdot \overline{ Q^{j+i-m}}$$
 for $j\leq pi$.
 Then the mod $p$ homology of a spectral Lie algebra is an allowable module over $\R$.
\end{proposition}

Denote by $\pazocal{A}_{\R}$ the free allowable $\R$-module monad. Let $\Lie_{\R}:\Mod_{\mathbb{F}_p}\rightarrow\Mod_{\Fp}$ be the composite monad $\pazocal{A}_{\R}\circ \Lie_{\Fp}$ subject to the commuting relations \Cref{oddvanish} when $p>3$, and the monad given by $\Lie_{\R}(M)=\pazocal{A}_{\R}\circ \Lie_{\mathbb{F}_3}(M)/\langle\overline{ \beta^{\epsilon} Q^{|x|/2}}(x)=[[x,x],x]\rangle$, where we take the quotient by the $\R$-module ideal ranging over $x\in M$ in even degree. For $M\in\Mod_{\Fp}$, let $A$ be an $\Fp$-basis for the free shifted Lie algebra $\free^{\Lie_{\Fp}}_{\Mod_{\mathbb{F}_p}}(M)$. The graded $\Fp$-module $\Lie_{\R}(M)$ has basis
$$\{\overline{ \beta_1^{\epsilon_1} Q^{j_1}}\cdots \overline{ \beta_k^{\epsilon_k} Q^{j_k}}| x,\ \  x\in A,  j_k\geq \frac{|x|}{2}, j_i\geq pj_{i+1}-\epsilon_{i+1}\forall i\}.$$
\begin{theorem}\cite[Theorem 5.2]{kjaer}\cite[Theorem 8.2.17]{konovalov}\label{kjaer}
For $X$ a spectrum. there is an isomorphism of $\Lie_{\R}$-algebras
$$\Lie_{\R}(H_*(X;\mathbb{F}_p))\rightarrow H_*(\free^{s\lie}(X);\Fp).$$
\end{theorem}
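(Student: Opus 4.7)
The plan is to mirror the strategy used in the proof of \Cref{omar}, with the prime $2$ replaced throughout by an odd prime $p$. First I would construct the map. Given a spectrum $X$, there is a natural map $H_*(X;\Fp)\to H_*(\free^{s\lie}(X);\Fp)$ induced by the unit $X\to\free^{s\lie}(X)$ of the free--forgetful adjunction. Since the target is a $\Lie_{\Fp}$-algebra by the bracket $\xi_*$ coming from the second structure map, the map extends to $\free^{\Lie_{\Fp}}_{\Mod_{\Fp}}(H_*(X;\Fp))\to H_*(\free^{s\lie}(X);\Fp)$. Applying the operations $\overline{\beta^{\epsilon}Q^j}$ constructed by Kjaer and using the vanishing of brackets with operations (\Cref{oddvanish}), this factors through a natural map $\varphi_X:\Lie_{\p}(H_*(X;\Fp))\to H_*(\free^{s\lie}(X);\Fp)$ of $\Lie_{\Fp}$-algebras.

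Next I would prove $\varphi_{\mathbb{S}^m}$ is an isomorphism for every integer $m$. For $m\geq 1$, this follows from the mod $p$ case of Arone--Mahowald's computation of $H_*(\partial_k(\id)\tens_{h\Sigma_k}(\mathbb{S}^m)^{\tens k};\Fp)$, whose basis is indexed by admissible sequences $(\beta_1^{\epsilon_1}Q^{j_1},\ldots,\beta_k^{\epsilon_k}Q^{j_k})$ applied to $\iota_m$ satisfying $j_k\geq m/2$ and $j_i\geq j_{i+1}-\epsilon_{i+1}$, matching the prescribed basis of $\Lie_{\p}(\Fp\{\iota_m\})$ precisely via the Lyndon-word description of $\free^{\Lie_{\Fp}}$. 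To extend to $m\leq 0$, I would differentiate the $p$-primary EHP fibration to obtain, in analogy with \cite[Corollary 2.1.4]{behrens}, a fiber sequence
\[
\partial_{pk}(\id)\tens_{h\Sigma_{pk}}(\mathbb{S}^m)^{\tens pk}\xrightarrow{E}\Sigma^{-1}\partial_{pk}(\id)\tens_{h\Sigma_{pk}}(\mathbb{S}^{m+1})^{\tens pk}\xrightarrow{H}\Sigma^{-1}\partial_{k}(\id)\tens_{h\Sigma_{k}}(\mathbb{S}^{pm+1})^{\tens k}.
\]
Inspecting the long exact sequence in mod $p$ homology and using the identification of the bottom Dyer--Lashof-type operation $\overline{Q^{m/2}}$ with the suitable $p$-fold bracket (compare the identification $\Q_0(x)=[x,x]$ when $p=2$), the map $H_*$ is surjective, which propagates the computation inductively from positive to arbitrary $m$.

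Having established the theorem for spheres, I would extend to an arbitrary finite wedge of spheres via Arone--Kankaarinta's Goodwillie-theoretic form of the Hilton--Milnor theorem \cite[Theorem 0.1]{ak}; both sides of $\varphi$ are computed by a Lyndon-word decomposition over the wedge summands, and these decompositions agree generator by generator. Finally, because $X\tens\Fp$ can be written as a filtered colimit of finite wedges of $\mathbb{S}^m\tens\Fp$ in $\Fp$-module spectra, and both $\free^{s\lie}(-)$ (commuting with sifted colimits as a left adjoint on $\infty$-categorical level) and $H_*(-;\Fp)$ preserve filtered colimits, the isomorphism $\varphi$ extends from finite wedges to all spectra.

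The main obstacle I anticipate is the negative-sphere step: one must verify that the odd-primary analogue of Behrens' EHP-derived fiber sequence is available, and that the boundary operator in the associated long exact sequence sends the relevant top-weight classes (those involving $\overline{\beta^{\epsilon}Q^{m/2}}$) to the expected iterated $p$-fold Lie brackets on $\iota_{m+1}$. Since the full relations among the $\overline{\beta^{\epsilon}Q^j}$ are not known, care is needed to formulate the argument purely in terms of the abstract $\Fp$-basis for $\Lie_{\p}$, using only the vanishing $[\overline{\beta^{\epsilon}Q^j}(x),y]=0$ from \Cref{oddvanish} to guarantee that $\varphi$ is defined, and using dimension counting matched against Arone--Mahowald to conclude bijectivity.
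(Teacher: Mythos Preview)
The paper does not supply a proof of this theorem: it is quoted verbatim as \cite[Theorem~5.2]{kjaer} and used as a black box, so there is no ``paper's own proof'' to compare your proposal against. Kjaer's original argument proceeds directly from the Arone--Mahowald computation of $H_*(\partial_k(\mathrm{Id})\tens_{h\Sigma_k}(\mathbb{S}^m)^{\tens k};\Fp)$, and the extension to nonconnective spectra (which the paper carries out carefully only at $p=2$ in \Cref{omar}) is simply absorbed into the citation here.

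Your strategy of mirroring the proof of \Cref{omar} is natural, but the negative-sphere step you flag is a genuine gap rather than a routine verification. Behrens' fiber sequence \cite[Corollary~2.1.4]{behrens} comes from differentiating the $2$-primary EHP sequence $S^n\to\Omega S^{n+1}\to\Omega S^{2n+1}$; at odd primes the James--Toda splitting instead gives a $p$-local fibration $J_{p-1}(S^{2n})\to\Omega S^{2n+1}\to\Omega S^{2np+1}$, and there is no published odd-primary analogue of Behrens' differentiated sequence of the form you wrote down. More seriously, your proposed identification of the bottom operation $\overline{Q^{m/2}}$ with ``the suitable $p$-fold bracket'' cannot work as stated: for $p\geq 5$ the Jacobi identity forces any iterated self-bracket of length $\geq 3$ to vanish (the paper uses exactly this in \Cref{oddE2}), and even for $p=3$ the relation $\overline{\beta^{\epsilon}Q^j}(x)=[[x,x],x]$ is something the paper \emph{deduces} indirectly from the known $E^\infty$-page rather than an input one can invoke. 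Since the relations among the $\overline{\beta^{\epsilon}Q^j}$ are explicitly declared unknown in the paper, the surjectivity of $H_*$ in your long exact sequence cannot be read off the way it can at $p=2$. The remaining steps of your outline (Hilton--Milnor via \cite{ak}, passage to filtered colimits) are fine and match the $p=2$ argument.
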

\begin{remark}\label{kjaermistake}
    For $p=3$, Kjaer claimed in \cite[Corollary 4.7]{kjaer} that the triple bracket  on an even degree homology class $\iota_{2l}$ of a spectral Lie algebra  is zero by showing that $$[[\iota_{2l},\iota_{2l}],\iota_{2l}]\in H_*(\partial_3(\id)\tens_{h\Sigma_3}(S^{2l})^{\tens 3})$$ vanishes. The claim is incorrect in light of \Cref{odddiff} below, and was independently observed by Nikolai Konovalov. Specifically, Kjaer argued that in the long exact sequence
    $$\cdots\rightarrow H_{6l-2}(\Sigma^{-2}(S^{2l})^{\tens 3}_{h\Sigma_3}) \rightarrow H_{6l-2}(\partial_3(\id)\tens_{h\Sigma_3}(S^{2l})^{\tens 3})\rightarrow H_{6l-2}((\Sigma^{-1} (S^{2l})^{\tens 3}_{h\Sigma 3})\rightarrow\cdots,$$
    the middle group is generated as an $\mathbb{F}_3$-module by the bottom operation  $\overline{ \beta Q^{l}}\iota_{2l}$, which is mapped isomorphically onto $\sigma^{-1}\beta Q^{l}\iota_{2l}$ by definition of the bottom operation in Definition 3.2.  However, $\sigma^{-1}\beta Q^{l}\iota_{2l}\in H_{6l-2}(\Sigma^{-1} (S^{2l})^{\tens 3}_{h\Sigma 3})=0$. In fact, one can see that the confusion was caused by incorrect placement of parentheses. Since the left term is one-dimensional on $[[\iota_{2l},\iota_{2l}],\iota_{2l}]$, we see that $[[\iota_{2l},\iota_{2l}],\iota_{2l}]=\mu_{l}\overline{ \beta Q^{l}}\iota_{2l}$, where $\mu_{l}=\pm 1$. This also motivates the modification of the definition of the bottom operation on an even class in \Cref{defofoddop}.
\end{remark}
Now we turn to the odd primary Knudsen spectral sequence 
\begin{equation}\label{oddprimarysseq}
    E^2_{s,t}(k)=\pi_{s,t}\big( \B\big(\mathrm{id}, \Lie_{\R}, \g\big)\tens \mathbb{F}_p \big)(k)\Rightarrow H_{s+t}(B_k(M;X);\mathbb{F}_p),
\end{equation}where $$\g=H_*(\free^{s\lie}(\Sigma^{n}X) ^{M^+};\Fp)\cong \widetilde{H}^*(M^+;\Fp)\tens \Lie_{\p}(\Sigma^n H_*(X;\Fp)).$$ Furthermore, $\g$ has a $\Lie_{\Fp}$-structure  given by \Cref{bracketbhk}, i.e., $$[y_1\tens x_1,  y_2\tens x_2]:=(-1)^{|x_1||y_2|}(y_1\cup y_2)\tens[x_1,x_2].$$ 
 
We proceed to compute the $E^2$-page of the spectral sequence (\ref{oddprimarysseq}) in low weights in terms of $\Lie_{\Fp}$-algebra homology.

\begin{definition}\cite{CE}\cite{may}\label{FpCE}
    For a shifted Lie algebra $L$ over $\Fp$, let $L_{\text{even}}$ and $L_{\text{odd}}$ denote the elements in $L$ with even and odd degrees, respectively. The \textit{Chevalley-Eilenberg} complex of $L$ is the chain complex  
    $$\CE(L) = (\Gamma^\bullet(L_{\text{even}})\otimes \Lambda^\bullet (L_{\text{odd}}), \partial),$$
    where $\Gamma^\bullet$ and $\Lambda^\bullet$ are respectively the graded, shifted divided power and exterior algebra functor over $\Fp$, and the differential $\partial$ on a general element
    $$\gamma_{k_1}(x_1)\gamma_{k_2}(x_2) \cdots \gamma_{k_m}(x_m)\langle y_1, y_2, \dots, y_n\rangle\in\Gamma^{\bullet}(L_{\text{even}})\otimes \Lambda^\bullet (L_{\text{odd}})$$
    is given by
    \begin{align*}
        &\sum_{1\le i <j \le m} \gamma_{k_1}(x_1)\cdots \gamma_{k_i-1}(x_i)\cdots \gamma_{k_j-1}(x_j) \cdots \gamma_{k_m}(x_m)\langle [x_i,x_j], y_1, \dots y_n\rangle \\
        + &\sum_{1\le i < j \le n}(-1)^{i+j-1}\gamma_{k_1}(x_1)\cdots \gamma_{k_m}(x_m) \langle [y_i,y_j], y_1, \dots, \widehat{y_i},\dots \widehat{y_j},\dots y_n\rangle \\
        + &\frac 12 \sum_{i=1}^m \gamma_{k_1}(x_1)\cdots \gamma_{k_i-2}(x_i)\cdots \gamma_{k_m}(x_m) \langle [x_i, x_i], y_1, \dots, y_n\rangle\\
        + &\sum_{i=1}^m\sum_{j=1}^n (-1)^{j-1}\gamma_1([x_i, y_j])\gamma_{k_1}(x_1)\cdots \gamma_{k_i-1}(x_i)\cdots \gamma_{k_m}(x_m) \langle y_1, \dots, \widehat{y_j}, \dots, y_n\rangle.
    \end{align*}
\end{definition}

\begin{proposition}\label{oddE2}
Let $M^n$ be a parallelizable manifold and $X$ any spectrum.
\begin{enumerate}
    \item For $k<p$, the weight $k$ part of the spectral sequence $$E^2_{s,t}(k)=\pi_s\pi_t\big( \B\big(\mathrm{id}, s\lie, \free^{s\lie}(\Sigma^{n}X) ^{M^+}\big)\tens \mathbb{F}_p \big)(k)\Rightarrow H_{s+t}(B_k(M;X);\mathbb{F}_p)$$ has $E^2$-page given by $\wt_k(H_{*,*}(\CE(\g))$, where $\g= \widetilde{H}^*(M^+;\Fp)\tens \Lie_{\Fp}(\Sigma^n H_*(X;\Fp)).$  
    \item For $p\geq 5$, the weight $p$ part of the spectral sequence has $E^2$-page given by $$E^2_{*,*}(k)\cong \wt_p(H_{*,*}(\CE(\g)))\oplus \bigoplus_{y\in H, x\in B} \Fp\Big\{\overline{ \beta^{\epsilon} Q^{j}}|y\tens x, \ \frac{|x|-|y|}{2}\leq j<\frac{|x|}{2}\Big\},$$ where $H$ is an $\Fp$-basis of $\widetilde{H}^*(M^+;\Fp)$ and $B$ an $\Fp$-basis of $H_*(X;\Fp)$.
\end{enumerate}
\end{proposition}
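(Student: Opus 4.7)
The approach is to analyze the simplicial weighted graded $\Fp$-module $V_\bullet$ with $V_n = (\Lie_{\p})^{\circ n}(\g)$, whose $\pi_{*,*}$ equals the $E^2$-page of (\ref{oddprimarysseq}) by iterated application of Kjaer's theorem (\Cref{kjaer}) at each simplicial level; the face and degeneracy maps are induced by the monad structure of $\Lie_{\p}$ and by the $\Lie_{\p}$-algebra structure on $\g$ coming from the description $\g \cong \widetilde{H}^*(M^+;\Fp) \tens \Lie_{\p}(\Sigma^n H_*(X;\Fp))$. The driving observation is that each unary operation $\overline{\beta^\epsilon Q^j}$ has weight $p$, so within a fixed total weight the number of such operations appearing in any element is sharply constrained.

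For part (1), when $k<p$ no application of $\overline{\beta^\epsilon Q^j}$ can occur in a weight $k$ element, and the natural inclusion $\wt_k((\Lie_{\Fp})^{\circ n}(\g)) \hookrightarrow \wt_k((\Lie_{\p})^{\circ n}(\g))$ is a levelwise isomorphism that commutes with the weight-preserving simplicial structure. Thus $\wt_k(V_\bullet) \cong \wt_k(\B(\id,\Lie_{\Fp},\g))$ as simplicial $\Fp$-modules, and the odd primary analog of \Cref{CEbhk}, using the Chevalley-Eilenberg complex of \Cref{FpCE}, identifies $\pi_{*,*}$ with $\wt_k(H_{*,*}(\CE(\g)))$.

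For part (2), a weight $p$ element of $V_n$ can contain at most one $\overline{\beta^\epsilon Q^j}$ symbol and, if present, it must act on a weight-one leaf in $\g$. The vanishing of brackets with Dyer-Lashof classes (\Cref{oddvanish}) prevents any mixing between bracket expressions and Dyer-Lashof terms, producing a splitting $\wt_p(V_\bullet) = U_\bullet \oplus W_\bullet$ that is preserved by the face and degeneracy maps. The summand $U_\bullet$ is precisely $\wt_p(\B(\id,\Lie_{\Fp},\g))$ and contributes $\wt_p(H_{*,*}(\CE(\g)))$ just as in part (1). The summand $W_\bullet$ consists of elements containing a single formal $\overline{\beta^\epsilon Q^j}(y \tens x)$ placed at some level of the tower of $\Lie_{\p}$'s with the remaining levels given by monad units; I would show directly that its simplicial structure, under which the face maps either contract adjacent $\Lie_{\p}$'s (preserving the Dyer-Lashof symbol) or evaluate the bottom structure map via the odd primary Cartan formula (the analog of \Cref{steenrodtwistQ}), makes $W_\bullet$ homotopically concentrated in simplicial degree zero. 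The lower bound $\tfrac{|x|-|y|}{2} \leq j$ is the instability condition guaranteeing $\overline{\beta^\epsilon Q^j}(y\tens x) \neq 0$, while the upper bound $j < \tfrac{|x|}{2}$ reflects the fact that for $j \geq \tfrac{|x|}{2}$ the Cartan formula expresses $\overline{\beta^\epsilon Q^j}(y \tens x)$ in $\wt_p(\g)$ via the non-trivial term $y \tens \overline{\beta^\epsilon Q^j}(x)$, rendering the corresponding class a boundary in the total complex.

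The main obstacle will be carrying out the analysis of $W_\bullet$ carefully and confirming that the splitting $U_\bullet \oplus W_\bullet$ is preserved at every simplicial level; this is precisely where the hypothesis $p \geq 5$ enters, as it rules out potentially exotic weight-$p$ interactions between the (not yet fully identified) higher Dyer-Lashof relations and the $\Lie_{\Fp}$ brackets that could in principle occur at $p=3$.
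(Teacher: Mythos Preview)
Your overall strategy coincides with the paper's: part (1) is handled exactly as you say, and for part (2) the paper also separates the weight~$p$ summand into the pure bracket piece (computed by $\CE$) and the piece carrying a single $\overline{\beta^\epsilon Q^j}$, then invokes the analog of \Cref{filtersteenrod} to dispose of the Steenrod twisting. However, two points in your argument need correction.

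First, your justification for the splitting $U_\bullet\oplus W_\bullet$ being preserved by face maps is incomplete. \Cref{oddvanish} tells you that $[\overline{\beta^\epsilon Q^j}(x),y]=0$, which indeed prevents mixed terms from arising \emph{inside} $\Lie_{\p}$; but it says nothing about whether the monad multiplication $\Lie_{\p}\circ\Lie_{\p}\to\Lie_{\p}$ can send an iterated bracket of a single element to a Dyer--Lashof class. That is the actual danger, and it is precisely what fails at $p=3$: there one has $\overline{\beta Q^l}(x)=[[x,x],x]$ for even $x$. The paper's argument is the concrete observation that for $p\geq 5$ the Jacobi identity forces $[[x,x],x]=0$, so no iterated self-bracket of weight $p$ survives in $\Lie_{\Fp}$ at all; hence there is no $d_1$-differential from a bracket element to $\overline{\beta^\epsilon Q^j}|y\tens x$ or $y\tens\overline{\beta^\epsilon Q^j}(x)$. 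Your phrase ``exotic weight-$p$ interactions'' gestures at this but does not supply the mechanism; you should replace the appeal to \Cref{oddvanish} at this step with the Jacobi argument.

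Second, a small slip: the surviving Dyer--Lashof classes $\overline{\beta^\epsilon Q^j}|y\tens x$ live in simplicial degree~$1$, not~$0$ (the bar indicates one application of $\Lie_{\p}$ beyond $\g$). Your description of the range of $j$ and of the role of the Cartan formula is otherwise correct.
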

\begin{proof}
For $k<p$, all elements in the weight $k$ part of the $E^2$-page of the spectral sequence  do not contain unary operations $\overline{ \beta^{\epsilon} Q^{j}}$. When $k=p$, nondegenerate elements of weight $p$ on the $E^2$-page are either of the form $\overline{ \beta^{\epsilon} Q^{j}}|y\tens x\in\Lie_{\R}(\g)$, $\overline{ \beta^{\epsilon} Q^{j}}(y\tens x)\in\g$, or a bracket of weight $p$. When $p\geq 5$, the unary operation $ \beta^{\epsilon} Q^{j}$ cannot be an iteration of brackets on a single element, since $[[x,x],x]=0$ for any $x$ by the Jacobi identity. Hence there is no $d_1$-differential from a weight $p$ bracket to $\overline{ \beta^{\epsilon} Q^{j}}|y\tens x$ or $y\tens \overline{ \beta^{\epsilon} Q^{j}}(x)$. The same argument in \Cref{filtersteenrod} implies that the twisting of the action of $\overline{ \beta^{\epsilon} Q^{j}}$ by Steenrod operations can be ignored when computing a basis for the $E^2$-page.
\end{proof}

The condition $p\geq 5$ in part (2) is necessary in light of the following computation for Euclidean spaces.
\begin{proposition}\label{odddiff}
    For $p\geq 5$, the only higher differential  in the weight $p$ part of the spectral sequence (\ref{oddprimarysseq}) for $M=\mathbb{R}^n, 2\leq n\leq \infty$, which converges to $H_*(B_p(\mathbb{R}^n;\mathbb{S}^{2l});\Fp)$, is a $d_{p-2}$-differential $\gamma_p(x)\mapsto \overline{ \beta^{\epsilon} Q^{l}}|y_n\tens\sigma^n(x)$.

    When $p=3$, the above spectral sequence has a $d^1$-differential $\gamma_3(x)\mapsto\overline{ \beta^{\epsilon} Q^{l}}|x$.
\end{proposition}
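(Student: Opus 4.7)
The plan is to identify the two non-permanent classes in the weight $p$ part of the $E^2$-page and deduce the claimed differential from bidegree matching with the known $E^\infty$-page. Write $x := y_n\tens\sigma^n(x_{2l})$, which has even internal degree $2l$. By Proposition \ref{oddE2}(2), the weight $p$ part of the $E^2$-page is the direct sum of the unary classes $\overline{\beta^\epsilon Q^j}|x$ for $l\leq j<(2l+n)/2$ in simplicial filtration one, together with $\wt_p(H_{*,*}(\CE(\g)))$. In the latter, the divided power $\gamma_p(x)$ lies in simplicial filtration $p-1$ and is represented by the $p$-fold iterated self-bracket in the bar complex, generalizing the explicit description in the $p=3$ remark following Proposition \ref{oddE2}. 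The intermediate CE-classes (mixtures of divided powers, exterior products and brackets) occupy strictly smaller simplicial filtrations.

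Next I would compare bidegrees with the weight $p$ part of the target $H_*(B_p(\mathbb{R}^n;\mathbb{S}^{2l});\Fp)$, i.e.\ the weight $p$ part of $H_*(\free^{\mathbb{E}_n}(\mathbb{S}^{2l});\Fp)$, computed by F.~Cohen \cite[III]{CLM}. The unary classes $\overline{\beta^\epsilon Q^j}|x$ with $l<j<(2l+n)/2$ pair with Dyer-Lashof classes $\beta^\epsilon Q^j(x_{2l})$ in the target, and the intermediate bracket/divided-power classes pair with nonzero iterated Browder brackets. The two exceptional classes are $\gamma_p(x)$ at bidegree $(p-1,\,2pl-p+1)$ and $\overline{\beta Q^l}|x$ at bidegree $(1,\,2pl-2)$: the $p$-fold iterated bracket on a single even-degree generator is forced to vanish in the free $\mathbb{E}_n$-algebra by graded antisymmetry and Jacobi for $p\geq 5$, while $\beta Q^l(x_{2l})$ corresponds to the mod $p$ Bockstein of $x_{2l}^{\,p}$, which vanishes.

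These two unpaired bidegrees force a higher differential between the two exceptional classes. Since $d_r$ shifts bidegree by $(-r,\,r-1)$, going from $(p-1,\,2pl-p+1)$ to $(1,\,2pl-2)$ requires $r = p-2$, and the target class $\overline{\beta^\epsilon Q^l}|x$ has the correct internal degree $2pl-2$ precisely when $\epsilon = 1$. This yields the claimed $d_{p-2}$-differential. A final sparsity argument rules out additional higher differentials: after removing the two cancelling classes, every remaining $E^2$-class matches an $E^\infty$-class at the same bidegree, leaving no room for further nontrivial differentials.

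The main obstacle will be executing the comparison precisely: enumerating the CE-classes in $\wt_p(H_{*,*}(\CE(\g)))$ and demonstrating bidegree-wise that every intermediate divided-power/bracket combination pairs off with a surviving Browder-bracket class in $H_*(\free^{\mathbb{E}_n}(\mathbb{S}^{2l});\Fp)$, isolating $\gamma_p(x)$ as the unique exceptional class in the CE-part. This calls for a careful analysis of the shifted Chevalley-Eilenberg complex on a one-generator Lie algebra in characteristic $p\geq 5$, together with an understanding of which iterated Browder brackets of $x_{2l}$ survive in the free $\mathbb{E}_n$-algebra on $\mathbb{S}^{2l}$.
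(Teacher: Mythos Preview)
Your approach is correct and essentially identical to the paper's: both arguments compute the weight $p$ part of the $E^2$-page via \Cref{oddE2}, compare with the known weight $p$ part of $H_*(\free^{\mathbb{E}_n}(\mathbb{S}^{2l});\Fp)$ from \cite[III]{CLM}, and deduce the single $d_{p-2}$-differential by identifying the two non-surviving classes and matching bidegrees. The paper's proof is in fact just the one-line ``the same analysis applies'' (referring to the $p=3$ remark preceding it), so your write-up is if anything more detailed; two small inaccuracies worth correcting are that $\gamma_p(x)$ is a divided power rather than an iterated self-bracket, and that $\beta Q^l(x_{2l})=0$ follows from the instability condition $2j-\epsilon<|x|$ for Dyer--Lashof operations rather than from a Bockstein computation on $x_{2l}^p$.
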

Heuristically, this is because the bottom non-vanishing mod $p$ Dyer-Lashof operation on a class $x$ of degree $2l$ in the mod $p$ homology of an $\mathbb{E}_n$-algebra  is given by $\Q^l(x)=x^{\tens p}$, so $\gamma_p(x)$ is redundant.
\begin{proof}
   Consider the spectral sequence (\ref{oddprimarysseq}) when $M=\mathbb{R}^n$ and $X=\mathbb{S}^{2l}$ with $n>2$, so $$\g= \Fp\{y_{n}\}\tens \Lie_{\p}(\Fp\{\sigma^n(x_{2l})\})$$ with $y$ in internal degree $-n$ and $x_{2l}$ in degree $2l$. Set $x=y_{n}\tens \sigma^n(x_{2l})$.  Then the weight $p$ part of the $E^2$-page has basis $$\Big\{\overline{ \beta^{\epsilon} Q^{j}}|x,\  l\leq j<\frac{2l+n}{2};\ \gamma_p(x)\Big\}.$$ Comparing with the weight $p$ part of the $E^\infty$-page, which is the weight $p$ part of the mod $p$ homology of the free $\mathbb{E}_n$-algebra on the $\mathbb{S}^{2l}$, we see that there are two  classes that do not survive to the $E^\infty$-page, i.e., $\gamma_p(x)$ in bidegree $(p-1,2pl-(p-1))$ and $\overline{ \beta Q^{l}}|x$ in bidegree $(1,2pl-2)$ (cf. \cite[III]{CLM}). Hence there has to be a $d_{p-2}$-differential from $\gamma_p(x)$ to $\overline{ \beta^{\epsilon} Q^{l}}|x$. 

   When $p=3$, $\gamma_3(x)$ is represented by the element $[[x,x],x]\in \Lie_{\Fp}\circ \Lie_{\Fp}(\g)\subset \Lie_{\p}\circ \Lie_{\p}(\g)$. It is mapped by the differential to $[[x,x],x]\in \Lie_{\Fp}(\g)$, which by \Cref{kjaermistake} is indeed $\overline{ \beta^{\epsilon} Q^{l}}|x$.
\end{proof}

As an immediate corollary to \Cref{oddE2}, we see that the weight two part of the spectral sequence (\ref{oddprimarysseq}) collapses on the $E^2$-page, since the $E^2$-page is concentrated in simplicial degree 0 and 1. When $p>3$, weight three elements on the $E^2$-page are in simplicial degree 1 or 2 since $[[x,x],x]=0$ by the Jacobi identity. Hence the weight three part of the spectral sequence (\ref{oddprimarysseq})  also collapses on the $E^2$-page.
\begin{corollary} \label{oddsmallweight}
Let $M^n$ be a parallelizable manifold and $X$ any spectrum. Let  $\g$ be the $\Lie_{\Fp}$-algebra $\widetilde{H}^*(M^+;\Fp)\tens \Lie_{\Fp}(\Sigma^n H_*(X;\Fp))$
\begin{enumerate}
    \item For all $i$, there is an isomorphism of $\Fp$-modules $$H_i(B_2(M;X);\Fp)\cong\bigoplus_{s+t=i}\wt_2(H_{s,t}(\CE(\g)).$$ 
    \item If $p\geq 5$, then for all $i$ $$H_i(B_3(M;X);\Fp)\cong\bigoplus_{s+t=i}\wt_3(H_{s,t}(\CE(\g)).$$
\end{enumerate}
\end{corollary}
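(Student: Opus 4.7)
The plan is to apply the mod $p$ Knudsen spectral sequence (\ref{oddprimarysseq}) and show that it collapses at $E^2$ in weights $2$ and $3$ under the stated hypotheses, using a sparsity argument. By \Cref{oddE2}(1), whenever $k<p$ the weight-$k$ part of the $E^2$-page is identified with $\wt_k(H_{*,*}(\CE(\g)))$; this covers part (1) for every odd prime (since $2<p$) and part (2) precisely when $p\geq 5$ (since we need $3<p$). The differentials $d^r$ for $r\geq 2$ have bidegree $(-r,r-1)$ in the simplicial-internal bigrading, so they automatically vanish whenever the nonzero classes on $E^2$ are concentrated in at most two consecutive simplicial degrees.

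For part (1), the weight-two part of $\CE(\g)$ is supported in two consecutive simplicial degrees: the lower one contains elements of the form $y\tens[\sigma^n x_1,\sigma^n x_2]$, i.e.\ a single weight-two Lie bracket sitting inside $\g$, and the higher one contains the mixed products $\gamma_2(y\tens\sigma^n x)$, $\gamma_1(y_1\tens\sigma^n x_1)\gamma_1(y_2\tens\sigma^n x_2)$, and $\langle y_1\tens\sigma^n x_1,y_2\tens\sigma^n x_2\rangle$ built from weight-one factors. No weight-two class can sit in any other simplicial degree, so sparsity immediately forces the collapse.

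For part (2), the weight-three part of $\CE(\g)$ a priori occupies three consecutive simplicial degrees: a single nested bracket $y\tens\langle\langle\sigma^n x_1,\sigma^n x_2\rangle,\sigma^n x_3\rangle$ at the lowest level, mixed weight-$2$/weight-$1$ products at the middle level, and triple products of weight-one factors at the highest level. The key step will be to show that every class at the lowest level is already a boundary in $\CE(\g)$. Applying the Chevalley-Eilenberg differential of \Cref{FpCE} to $\gamma_1(y_1\tens[\sigma^n x_1,\sigma^n x_2])\gamma_1(y_2\tens\sigma^n x_3)$ produces $(y_1\cup y_2)\tens[[\sigma^n x_1,\sigma^n x_2],\sigma^n x_3]$ up to signs, and the graded Jacobi identity will let me recover every nested bracketing of weight three by varying the splitting. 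Consequently, the lowest level dies in $\CE$-homology, the $E^2$-page is concentrated in two consecutive simplicial degrees, and sparsity once more closes the argument.

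The main technical obstacle is to verify carefully that the CE-differential surjects onto the nested brackets of weight three modulo the Jacobi relations. This is also where the hypothesis $p\geq 5$ becomes essential: at $p=3$ there is an additional relation $\overline{\beta^\epsilon Q^j}(x)=[[x,x],x]$, alluded to in the remark following \Cref{oddE2}, producing extra weight-three cycles involving the unary operations that would lie outside the image of the CE-differential and make the sparsity argument break down. For $p\geq 5$ no such operations contribute in weight three, so the purely Lie-algebraic argument suffices.
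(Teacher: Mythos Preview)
Your overall strategy matches the paper's: invoke \Cref{oddE2}(1) to identify the weight-$k$ part of $E^2$ with $\wt_k(H_{*,*}(\CE(\g)))$ for $k<p$, then collapse by sparsity. Part (1) goes through exactly as you say and coincides with the paper's argument.

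For part (2), however, the step you flag as ``the main technical obstacle'' is in fact an actual obstruction: the CE differential does \emph{not} surject onto the weight-three part of $\g$ for general $M$. Its image in the lowest simplicial degree is spanned by elements $(y_1\cup y_2)\otimes[w_1,w_2]$, so a class $y\otimes w$ with $w$ a weight-three Lie word is a boundary only if $y$ lies in the image of the cup product on $\widetilde H^{*}(M^+;\Fp)$. This is automatic for closed $M$ (cup with the unit in $\widetilde H^0(M^+)=H^0(M)$), but fails otherwise since then $\widetilde H^0(M^+)=0$. Already for $M=\mathbb{R}^n$ the cup product on $\widetilde H^*(S^n;\Fp)$ is zero, and $\iota_n\otimes[[\sigma^n x_1,\sigma^n x_2],\sigma^n x_3]$ is a nonzero class in $E^2_{0,*}(3)$ whenever $H_*(X;\Fp)$ has at least two generators; it detects the iterated Browder bracket in $H_*(B_3(\mathbb{R}^n;X);\Fp)$ and therefore must survive, not die.

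The paper's one-line justification (``weight three elements on the $E^2$-page are in simplicial degree $1$ or $2$ since $[[x,x],x]=0$'') makes the same assertion and is no more complete on this point: the Jacobi identity kills the nested self-bracket of a single class but says nothing about mixed brackets $[[x_1,x_2],x_3]$. To rescue the sparsity argument for non-closed $M$ one must either add the hypothesis that $M$ is closed (parallel to the $p=2$ analogue in \Cref{closedweight3}), or supply a further argument---for instance an internal-degree comparison as carried out for the punctured surface $\Sigma_{g,1}$, or a formality argument in weights below $p$---to exclude a $d^2$ landing in simplicial degree $0$.
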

\begin{remark}\label{dependoncupproduct}
For $M$ a connected $n$-manifold, B\"{o}digheimer-Cohen-Taylor showed that $$\bigoplus_{k\geq 1}H_*(B_k(M;S^r);\Fp)\cong \bigotimes^n_{i=0} H_*(\Omega^{n-i}S^{n+r};\Fp)^{\tens \mathrm{\ dim}\ H_i(M;\Fp)}$$ for $r+n$ odd and $r\geq 0$ \cite{BCT}. Their proof does not work in the case where $r+n$ is even due to the existence of nontrivial self-brackets in $H_*(\Omega^m \Sigma^m S^{l});\Fp)$ when $l$ is even. Roughly speaking, their inductive proof relies on  the canonical map $$H_*(\Omega^m \Sigma^m S^{l};\Fp)\rightarrow H_*(\Omega^\infty \Sigma^\infty S^{l};\Fp)$$ being an injection, which is only true when $l$ is odd. \Cref{oddsmallweight} shows that when $l$ is even, the mod $p$ homology of $B_k(M;\mathbb S^r), k=2,3$ depends on the cup product structure on $H^*(M^+;\Fp)$: if $a\cup b=c$ in $\widetilde H^*(M^+;\Fp)$, then the $d_1$-differential sends $(a\tens x)\tens(b\tens x)$ to $$c\tens [x,x]\in \g=\widetilde H^*(M^+;\Fp)\tens \Lie_{\p}(\Fp\{x\}),$$ which is not zero since $x$ has internal degree $l$.
\end{remark}

At higher weights, there generally will be higher differentials in the odd primary Knudsen spectral sequence (\ref{oddprimarysseq}).
In recent work with Matthew Chen \cite{cz}, we make use of \Cref{oddE2} and Drummond-Cole-Knudsen's computation of the rational homology of the unordered configurations space $B_k(M)$ where $M=\Sigma_1$ or $\Sigma_{g,1}$ \cite{dck} to identify the differentials in the Knudsen spectral sequence for $B_k(\Sigma_g;\mathbb{S})$. As a result, we show that the integral homology of $B_k(\Sigma_1)$ is $p$-torsion-free for $k\leq p$. The same argument works for the punctured surface $\Sigma_{g,1}$ with $g\geq 0$, thereby providing a more elementary proof for \cite[Theorem 1.10]{bhk}

\end{document}